\tikzset{every picture/.style={line width=1pt}} %
\newcommand{\expect}{\mathbb{E}}
\newcommand{\real}{\mathbb{R}}
\newcommand{\bx}{x}
\newcommand{\by}{y}
\newcommand{\bigo}{\mathcal{O}}
\newcommand{\bz}{z}
\newcommand{\bv}{v}
\newcommand{\zero}{\mathbf{0}}
\newcommand{\iden}{\mathbf{I}}
\newcommand{\bmat}[1]{\begin{bmatrix}#1\end{bmatrix}}
\newcommand{\bsmat}[1]{\left[ \begin{smallmatrix} #1 \end{smallmatrix} \right]}
\newtheorem{assumption}{Assumption} 
\newtheorem{conj}{Conjecture} 
\declaretheorem[name=Lemma]{lem}
\declaretheorem[name=Remark]{rem}
\declaretheorem[name=Theorem]{thm}
\definecolor{mygreen}{rgb}{0.1,0.7,0.1}
\definecolor{mydarkred}{rgb}{0.7,0.1,0}
\begin{document}

\title{A Unified Analysis of First-Order Methods for \\ Smooth Games via Integral Quadratic Constraints}

\author{\name Guodong Zhang${}^{1, 3}$ \email gdzhang@cs.toronto.edu \\
        \name Xuchan Bao${}^{1, 3}$ \email jennybao@cs.toronto.edu \\
        \name Laurent Lessard${}^{2}$ \email l.lessard@northeastern.edu \\
        \name Roger Grosse${}^{1, 3}$ \email rgrosse@cs.toronto.edu \\
        \addr ${}^{1}$Department of Computer Science, University of Toronto \\
              ${}^{2}$Mechanical and Industrial Engineering, Northeastern University \\
              ${}^{3}$Vector Institute}

\editor{Sebastian Nowozin}

\maketitle

\begin{abstract}%
The theory of integral quadratic constraints (IQCs) allows the certification of exponential convergence of interconnected systems containing nonlinear or uncertain elements. In this work, we adapt the IQC theory to study first-order methods for smooth and strongly-monotone games and show how to design tailored quadratic constraints to get tight upper bounds of convergence rates. Using this framework, we recover the existing bound for the gradient method~(GD), derive sharper bounds for the proximal point method~(PPM) and optimistic gradient method~(OG), and provide \emph{for the first time} a global convergence rate for the negative momentum method~(NM) with an iteration complexity $\bigo(\kappa^{1.5})$, which matches its known lower bound. In addition, for time-varying systems, we prove that the gradient method with optimal step size achieves the fastest provable worst-case convergence rate with quadratic Lyapunov functions. Finally, we further extend our analysis to stochastic games and study the impact of multiplicative noise on different algorithms. We show that it is impossible for an algorithm with one step of memory to achieve acceleration if it only queries the gradient once per batch
(in contrast with the stochastic strongly-convex optimization setting, where such acceleration has been demonstrated). 
However, we exhibit an algorithm which achieves acceleration with two gradient queries per batch.
Our code is made public at \url{https://github.com/gd-zhang/IQC-Game}.
\end{abstract}%

\begin{keywords}
  Smooth Game Optimization, Monotone Variational Inequality, First-Order Methods, Integral Quadratic Constraints, Dynamical Systems
\end{keywords}

\section{Introduction}
Gradient-based optimization algorithms have played a prominent role in machine learning and underpinned a significant fraction of the recent successes in deep learning~\citep{krizhevsky2012imagenet, silver2017mastering}. 
Typically, the training of many models can be formulated as a single-objective optimization problem, which can be efficiently solved by gradient-based optimization methods.
However, there are a growing number of models that involve multiple interacting objectives. For example, generative adversarial networks~\citep{goodfellow2014generative, radford2015unsupervised, arjovsky2017wasserstein}, adversarial training~\citep{madry2018towards} and primal-dual reinforcement learning~\citep{du2017stochastic, dai2018sbeed} all require the joint minimization of several objectives. Hence, there is a surge of interest in coupling machine learning and game theory by modeling problems as smooth games.

Smooth games, and the closely related framework of variational inequalities, are generalizations of the standard single-objective optimization framework, allowing us to model multiple players and objectives. However, new issues and challenges arise in solving smooth games or variational inequalities. Due to the conflict of optimizing different objectives, standard gradient-based algorithms may exhibit rotational behaviors~\citep{mescheder2017numerics, letcher2019differentiable} and hence converge slowly. To combat this problem, several algorithms have been introduced specifically for smooth games, including negative momentum (NM)~\citep{gidel2019negative}, optimistic gradient method (OG)~\citep{popov1980modification, rakhlin2013optimization, daskalakis2018training, mertikopoulos2018optimistic} and extra-gradient (EG)~\citep{korpelevich1976extragradient, nemirovski2004prox}. While these algorithms were motivated by provable convergence bounds, many such analyses were limited to quadratic problems with linear dynamics, or to proving local convergence (so that the dynamics could be linearized)~\citep{gidel2019negative, azizian2020accelerating, zhang2020suboptimality}. Other analyses proved global convergence rates, but relied on deep insight\footnote{Designing a Lyapunov function is largely regarded as a black art. The proofs of OG/EG are based on the insight that they both approximate the proximal point method~\citep{mokhtari2020unified}. Typically these insights do not generalize to other algorithms.} to design Lyapunov functions on a case-by-case basis~\citep{gidel2018variational, azizian2020tight, mokhtari2020unified}.

In this paper, we aim to provide a systematic framework for analyzing first-order methods in solving smooth and strongly-monotone games using techniques from control theory. 
In particular, we view common optimization algorithms as feedback interconnections and adopt the theory of integral quadratic constraints~(IQCs)~\citep{megretski1997system} to model the nonlinearities and uncertainties in the system. 
While enforcing common assumptions in optimization would seem to require infinitely many IQCs, \citet{lessard2016analysis} showed that it was possible to certify tight convergence bounds for first-order optimization algorithms using a small number of IQCs. The result of their analysis was a largely mechanical procedure for converting questions about convergence into small semidefinite programs which could be solved efficiently. We perform an analogous analysis in the more complex setting of smooth games, arriving at a very different, but similarly compact, set of IQCs. Particularly, we show that only a few pointwise IQCs are sufficient to certify tight convergence bounds for a variety of algorithms --- an even more parsimonious description than in the optimization setting.
The end result of our analysis is a unified and automated method for analyzing convergence of first-order methods for smooth games.

Using this framework, we are able to recover or even improve known convergence bounds for a variety of algorithms, which we summarize as follows:
\begin{itemize}\setlength\itemsep{0.01em}
    \item We recover the known convergence rate of the gradient method for smooth and strongly-monotone games by solving a $2 \times 2$ semidefinite program (SDP) analytically.
    \item Similarly, we derive an analytical convergence bound for the proximal point method that is sharper than the best available result~\citep[Theorem 2]{mokhtari2020unified}.
    \item We derive a slightly improved convergence rate for the optimistic gradient method (even though the existing analysis~\citep{gidel2018variational} is fairly involved).
\end{itemize}
We emphasize that all of the above results are obtainable from our unified framework through a mechanical procedure of deriving and solving an SDP.
Beyond these results, we can gain new insights and derive new results that were previously unknown and are difficult to obtain using existing approaches:
\begin{itemize}\setlength\itemsep{0.01em}
    \item We prove that, for time-varying systems, the gradient method with optimal step size achieves the fastest provable convergence rate with quadratic Lyapunov functions among any algorithm representable as a linear time-invariant system with finite state.
    \item We provide the first global convergence rate guarantee for the negative momentum method for smooth and strongly-monotone games, matching the known lower bound~\citep{zhang2020suboptimality}.
    \item We also show that the optimistic gradient method achieves the optimal convergence rate provable in our framework among algorithms with one step of memory~\eqref{eq:second-diff}. 
\end{itemize}
Further, we adapt the IQC framework to analyze stochastic games. We model stochasticity using the strong growth condition~\citep{schmidt2013fast, vaswani2019fast}, which has been used to model multiplicative noise in the optimization setting, but has not been investigated in the game setting.
The key is to model optimization algorithms as stochastic jump systems as in~\citet{hu2017unified}. We demonstrate that GD is robust to noise in the sense that it can attain the same $\bigo(\kappa^2)$ convergence rate as the deterministic case (where the constant depends on noise level).
By contrast, OG and NM are degraded to an $\bigo(\kappa^2)$ convergence rate, in contrast with their $\bigo(\kappa)$ and $\bigo(\kappa^{1.5})$ rates in the deterministic setting. We show this is an instance of a more general phenomenon: with large enough noise, no first-order algorithm with at most one step of memory can be proved under our analysis to improve upon GD's convergence rate. (This is in contrast to the setting of smooth and strongly convex optimization, where such acceleration has been proved~\citep{jain2018accelerating, vaswani2019fast}.) Nonetheless, we exhibit an algorithm which achieves acceleration in the stochastic setting by querying the vector field twice for each batch of data.

We believe our IQC framework is a powerful tool for exploratory algorithmic research, since it allows us to quickly ask and answer a variety of questions about the convergence of algorithms for smooth games.

\subsection{Other Related Works}
For the general monotone setting (without strong monotonicity), it is known that the optimal rate of convergence for first-order methods is $\bigo(1/T)$, and this rate is achieved by both the EG and OG algorithms~\citep{nemirovski2004prox, tseng2008accelerated, hsieh2019convergence, mokhtari2020convergence} for the averaged (ergodic) iterates. Later, \citep{golowich2020last, golowich2020tight} derived a $\bigo(1/\sqrt{T})$ bound for the last iterate of EG and OG.

Beyond the monotone setting, non-monotone games (e.g., nonconvex-nonconcave minimax problems) have recently gained more attention due to their generality. However, there might be no Nash (or even local Nash) equilibria in that setting due to the loss of strong duality. To overcome that, different notions of equilibrium were introduced by taking into account the sequential structure of games~\citep{jin2020local, fiez2019convergence, farnia2020gans, mangoubi2020second}. In that setting, the main challenge is to find the right equilibrium and some algorithms~\citep{wang2019solving, adolphs2019local, mazumdar2019finding} have been proposed to achieve that. 

For smooth game optimization, there are also many algorithms using high-order information. For example, consensus optimization~\citep{mescheder2017numerics}, Hamiltonian gradient descent~\citep{letcher2019differentiable, abernethy2019last}, competitive gradient descent~\citep{schafer2019competitive}, follow-the-ridge~\citep{wang2019solving} and LEAD~\citep{hemmat2020lead} all used second-order information to accelerate the convergence.
Currently, our IQC framework is primarily designed for first-order algorithms. Exploring new types of IQCs that can be used to analyze algorithms using high-order information would be an interesting future direction. 

\section{Preliminaries}\label{sec:preliminary}

\subsection{Variational Inequality Formulation of Smooth Games}
We begin by presenting the basic variational inequality framework that we consider in the sequel. Let $\Omega$ be a nonempty convex subset of $\real^d$, and let $F: \real^d \to \real^d$ be a continuous mapping on $\real^d$. In its most general form, the variational inequality (VI) problem~\citep{harker1990finite} associated to $F$ and $\Omega$ can be stated as:
\begin{equation}\label{eq:variational-inequality}
    \text{find } \bz^* \in \Omega \;\; \text{such that} \;\; F(\bz^*)^\top(\bz - \bz^*) \geq 0 \;\; \text{for all} \; \bz \in \Omega.
\end{equation}
In the case of $\Omega = \real^d$, it reduces to finding $\bz^*$ such that $F(\bz^*) = 0$. To provide some intuition about variational inequalities, we discuss two important examples below:
\begin{example}[Minimization]
Suppose that $F = \nabla_\bz f$ for a smooth function $f$ on $\real^d$, then the variational inequality problem amounts to finding the critical points of $f$. In the case where $f$ is convex, any solution of \eqref{eq:variational-inequality} is a global minimizer.
\end{example}
\begin{example}[Minimax Games]\label{examp:minimax}
Consider a convex-concave minimax optimization problem (saddle-point problem). Our objective is to solve the problem
$
    \min_\bx \max_\by f(\bx, \by)
$,
where $f$ is a smooth function. It is easy to show that minimax optimization is a special case of \eqref{eq:variational-inequality} with $F(\bz) = [\nabla_\bx f(\bx, \by)^\top, -\nabla_\by f(\bx, \by)^\top]^\top$, where $\bz = [{\bx}^\top, {\by}^\top]^\top$.
\end{example}

To be noted, the vector field $F$ in Example~\ref{examp:minimax} is not necessarily conservative, i.e., it might not be the gradient of any function. In addition, if $f$ in minimax problem is convex-concave, any solution $\bz^* = [{\bx^*}^\top, {\by^*}^\top]^\top$ of \eqref{eq:variational-inequality} is a global \emph{Nash Equilibrium}~\citep{von1944theory}:
\begin{equation}
    f(\bx^*, \by) \leq f(\bx^*, \by^*) \leq f(\bx, \by^*) \quad \text{for all } \bx \text{ and } \by \in \real^d.
\end{equation}
In this work, we are particularly interested in the case of $f$ being a strongly-convex-strongly-concave and smooth function, which basically implies that the vector field $F$ is strongly-monotone and Lipschitz (see~\citet[Lemma 2.6]{fallah2020optimal} for more details). Here we state our assumptions formally.
\begin{assumption}[Strongly Monotone]\label{ass:monotonicity}
    The vector field $F$ is $m$-strongly-monotone:
    \begin{equation}\label{eq:monotone}
        (F(\bz_1) - F(\bz_2))^\top (\bz_1 - \bz_2) \geq m \|\bz_1 - \bz_2 \|_2^2 \quad \text{for all } \bz_1, \bz_2 \in \real^d.
    \end{equation}
\end{assumption}
\begin{assumption}[Lipschitz]\label{ass:lipschitz}
    The vector field $F$ is L-Lipschitz:
    \begin{equation}\label{eq:lispchitz}
        \|F(\bz_1) - F(\bz_2) \|_2 \leq L \|\bz_1 - \bz_2 \|_2 \quad \text{for all } \bz_1, \bz_2 \in \real^d.
    \end{equation}
\end{assumption}
In the context of variational inequalites, Lipschitzness and (strong) monotonicity are fairly standard and have been used in many classical works~\citep{tseng1995linear, chen1997convergence, nesterov2007dual, nemirovski2004prox}. With these two assumptions in hand, we define the condition number $\kappa \triangleq L / m$, which measures the hardness of the problem. In the following, we turn to suitable optimization techniques for the variational inequality.

\subsection{Optimization Algorithms as Dynamical Systems}
Borrowing the notations from~\citet{lessard2016analysis}, we frame various first-order algorithms as a unified linear dynamical system\footnote{This linear dynamical system can represent any first-order methods.} in feedback with a nonlinearity $\phi: \real^d \rightarrow \real^d$,
\begin{equation}\label{eq:dynamical-system}
\begin{aligned}
    \xi_{k+1} &= A \xi_{k} + B u_k \\
    y_k &= C \xi_k + D u_k \\
    u_k &= \phi(y_k).
\end{aligned}
\end{equation}
At each iteration $k = 0, 1, ...$, $u_k \in \real^d$ is the control input, $y_k \in \real^d$ is the output, and $\xi_k \in \real^{nd}$ is the state for algorithms with $n$ step of memory. 
The state matrices $A, B, C, D$ differ for various algorithms. For most algorithms we consider in the paper, they have the general form:
\begin{equation*}
    \left[ 
    \begin{array}{c|c}
        A & B \\ \hline
        C & D
    \end{array}
    \right]
    = 
    \left[ 
    \begin{array}{cc|c}
        (1 + \beta) \iden_d & -\beta \iden_d & -\eta \iden_d \\ 
        \iden_d & \zero_d & \zero_d \\ \hline
        (1 + \alpha)\iden_d & -\alpha \iden_d & \zero_d
    \end{array}
    \right],
\end{equation*}
where $\iden_d$ and $\zero_d$ are the identity and zero matrix of size $d \times d$, respectively. One can then reduce linear dynamical system~\eqref{eq:dynamical-system} to a second-order difference equation by setting $\xi_k \coloneqq \begin{bmatrix} \bz_{k}^\top, \bz_{k-1}^\top \end{bmatrix}^\top$ and $\phi \coloneqq F$, which we term \emph{algorithms with one step of memory}:
\begin{equation}\label{eq:second-diff}
    \bz_{k+1} = (1 + \beta) \bz_k - \beta \bz_{k-1} - \eta F((1 + \alpha)\bz_k - \alpha \bz_{k-1}),
\end{equation}
where $\eta$ is a constant step size. By choosing different $\alpha, \beta$, we can recover different methods\footnote{One can also model EG using the same dynamical system \eqref{eq:dynamical-system}, but it does not fit into \eqref{eq:second-diff}.} (see Table~\ref{tab:first-order}).
For instance, optimistic gradient method (OG)~\citep{daskalakis2018training} is typically written in the following form ($\alpha=1$ and $\beta=0$).
\begin{equation}\label{eq:ogda-recursion}
    \bz_{k+1} = \bz_k - 2\eta F(\bz_k) + \eta  F(\bz_{k-1}).
\end{equation}
For smooth and strongly-monotone games, \citet[Corollary 1]{azizian2020accelerating} showed a lower bound on convergence rate for \emph{any} algorithm of the form~\eqref{eq:dynamical-system}:
\begin{equation}\label{eq:lower-bound}
    \|\xi_k - \xi^* \|_2 \geq \rho_{\text{opt}}^k \|\xi_0 - \xi^*\|_2 \quad \text{with} \quad \rho_{\text{opt}} = 1 - \frac{2m}{m + L}.
\end{equation}
Also, one can show that the lower bound for GD is $\sqrt{1 - 1/\kappa^2}$ and the lower bound for NM~\citep{zhang2020suboptimality} is $1 - c\kappa^{-1.5}$ where $c$ is a constant independent of $\kappa$.
\begin{table}[t]
    \centering
    \begin{tabular}{ |c|c|c|c| } 
     \hline
     \small{\textbf{Method}} & \small{Parameter Chioce} & \small{Complexity} & \small{Reference} \\ \hline
     \small{GD} & $\alpha = 0$, $\beta = 0$ & \small{$\bigo(\kappa^2)$} & \small{\citet{ryu2016primer, azizian2020tight}} \\ \hline
     \small{OG} & $\alpha = 1$, $\beta = 0$ & \small{$\bigo(\kappa)$} & \small{\citet{gidel2018variational, mokhtari2020unified}} \\ \hline
     \small{NM} & $\alpha = 0$, $\beta < 0$  & \small{$\bigo(\kappa^{1.5})$} & \small{Section~\ref{sec:nm} of \textbf{this paper}}\\ \hline
    \end{tabular}
    \caption{Global convergence rates of algorithms for smooth and strongly-monotone games.}
    \label{tab:first-order}
\end{table}

\subsection{IQCs for Exponential Convergence Rates}
We now present the theory of IQCs and connect it with exponential convergence. IQCs provide a convenient framework for analyzing interconnected dynamical systems that contain components that are nonlinear, uncertain, or otherwise difficult to model. The idea is to replace these troublesome components by quadratic constraints on its inputs and outputs that are known to be satisfied by all possible instances of the component. 

In our case, the vector field $F$ is the troublesome function we wish to analyze (currently, the IQC framework is limited to first-order algorithms). Although we do not know $F$ exactly, we assume to have some knowledge of the constraints it imposes on the input-output pair $(y, u)$. For example, we already assume $F$ to be $L$-Lipschitz, which implies
$\|u_k - u^*\|_2 \leq L \|y_k - y^* \|_2 $ for all $k$ with $u^* = F(y^*)$ as a fixed point. In matrix form, this is
\begin{equation}\label{eq:lipschitz-quad-constraint}
    \begin{bmatrix}
    y_k - y^* \\
    u_k - u^*
    \end{bmatrix}^\top
    \begin{bmatrix}
    L^2 \iden_d & \zero_d \\
    \zero_d & - \iden_d
    \end{bmatrix}
    \begin{bmatrix}
    y_k - y^* \\
    u_k - u^*
    \end{bmatrix} \geq 0.
\end{equation}
Notably, the above constraint is very special in that it only manifests itself as separate quadratic constraints on each $(y_k, u_k)$. It is possible to specify quadratic constraints that couple different $k$ values. To achieve that, we follow \citet{lessard2016analysis} and adopt auxiliary sequences $\zeta, s$ together with a map $\Psi$ characterized by matrices $(A_\Psi, B_\Psi^y, B_\Psi^u, C_\Psi, D_\Psi^y, D_\Psi^u)$:
\begin{equation}\label{eq:ass-ref}
\begin{aligned}
    \zeta_{k+1} &= A_\Psi \zeta_k + B_\Psi^y y_k + B_\Psi^u u_k, \\
    s_{k} &= C_\Psi \zeta_k + D_\Psi^y y_k + D_\Psi^u u_k.
\end{aligned}
\end{equation}
The equations~\eqref{eq:ass-ref} define an affine map $s = \Psi(y, u)$, where $s_k$ could be a function of all past $y_i$ and $u_i$ with $i \leq k$.
We consider the quadratic form $(s_k - s^*)^\top M (s_k-s^*)$ for a given matrix $M$ with $s^*$ and $\xi^*$ fixed points of \eqref{eq:ass-ref}. We note that the quadratic form is a function of $(y_0,\dots, y_k, u_0,\dots, u_k)$ that is determined by our choice of $(\Psi, M)$. In particular, we can recover constraint~\eqref{eq:lipschitz-quad-constraint} with
\begin{equation}\label{eq:psi}
    \Psi = \left[ 
    \begin{array}{c|cc}
        A_\Psi & B_\Psi^y & B_\Psi^u \\\hline
        C_\Psi & D_\Psi^y & D_\Psi^u \\ 
    \end{array}
    \right]
    = 
    \left[ 
    \begin{array}{c|cc}
        \zero_d & \zero_d & \zero_d \\\hline
        \zero_d & \iden_d & \zero_d \\
        \zero_d & \zero_d & \iden_d \\ 
    \end{array}
    \right], \qquad
    M = \begin{bmatrix}
    L^2 \iden_d & \zero_d \\
    \zero_d & - \iden_d
    \end{bmatrix}.
\end{equation}
In general, this sort of quadratic constraints are called IQCs. There are different types of IQCs (see~\citet[Definition 3]{lessard2016analysis}), but we will only need \emph{pointwise} IQCs as quadratic Lyapunov functions turn out to be expressive enough. 
\begin{definition}
    A \textbf{Pointwise IQC} defined by $(\Psi, M)$ satisfies
    \begin{equation*}
        (s_k - s^*)^\top M (s_k-s^*) \geq 0 \quad \text{for all } k \geq 0.
    \end{equation*}
\end{definition}
\noindent Combining the dynamics~\eqref{eq:dynamical-system} with the map $\Psi$ (by eliminating $y_k$), we obtain
\begin{equation}\label{eq:dynamics-comb}
\begin{aligned}
    \bmat{\xi_{k+1} \\ \zeta_{k+1}} &= \bmat{A & 0 \\ B_{\Psi}^y C & A_\Psi} \bmat{\xi_{k} \\ \zeta_{k}} + \bmat{B \\ B_\Psi^u + B_\Psi^y D} u_k, \\
    s_k &= \bmat{D_{\Psi}^y C & C_{\Psi}} \bmat{\xi_{k} \\ \zeta_{k}} + \bmat{D_\Psi^u + D_\Psi^y D} u_k.
\end{aligned}
\end{equation}
More succinctly, \eqref{eq:dynamics-comb} can be written as
\begin{equation}\label{eq:dynamics-compact}
\begin{aligned}
    x_{k+1} &= \hat{A} x_k + \hat{B} u_k \\
    s_k &= \hat{C}x_k + \hat{D} u_k
\end{aligned},
\qquad \text{where } x_k \triangleq \begin{bmatrix} \xi_k \\ \zeta_k \end{bmatrix}.
\end{equation}
\begin{figure}[t]
\centering
\begin{tikzpicture}[x=0.75pt,y=0.75pt,yscale=-0.9,xscale=0.9]
\draw   (100,105) -- (170,105) -- (170,145) -- (100,145) -- cycle ;
\draw   (100,173) -- (170,173) -- (170,213) -- (100,213) -- cycle ;
\draw   (170,126) -- (246,126) ;
\draw    (246,126) -- (246,196) ;
\draw[-{Latex[scale=1.0]}]    (246,196) -- (170,196) ;
\draw    (100,196) -- (41,196) -- (41,126) ;
\draw[-{Latex[scale=1.0]}]    (41,126) -- (100,126) ;
\draw   (147,127.4) .. controls (147,124.97) and (148.97,123) .. (151.4,123) -- (170,123) -- (170,145) -- (147,145) -- cycle ;
\draw[-{Latex[scale=1.0]}]   (209,196) -- (209,220) -- (260,220) ;
\draw[-{Latex[scale=1.0]}]    (71.5,196) -- (71.5,240) -- (260,240) ;
\draw   (260,210) -- (314,210) -- (314,250) -- (260,250) -- cycle ;
\draw   (291,232.4) .. controls (291,229.97) and (292.97,228) .. (295.4,228) -- (314,228) -- (314,250) -- (291,250) -- cycle ;
\draw[-{Latex[scale=1.0]}]    (314,228) -- (352,228) ;

\draw (128,120) node [anchor=north west][inner sep=0.75pt]    {$G$};
\draw (128,185) node [anchor=north west][inner sep=0.75pt]    {$\phi$};
\draw (230,160) node [anchor=north west][inner sep=0.75pt]    {$y$};
\draw (25,160) node [anchor=north west][inner sep=0.75pt]    {$u$};
\draw (153.4,126.4) node [anchor=north west][inner sep=0.75pt]  [font=\small]  {$\xi $};
\draw (275,225) node [anchor=north west][inner sep=0.75pt]    {$\Psi $};
\draw (297.4,231.4) node [anchor=north west][inner sep=0.75pt]  [font=\small]  {$\zeta $};
\draw (356,224) node [anchor=north west][inner sep=0.75pt]    {$s$};
\end{tikzpicture}
\caption{Feedback interconnection between a system $G$ (optimization algorithm) with state matrices $(A, B, C, D)$ and a nonlinearity $\phi$. An IQC is a constraint on $(y, u)$ satisfied by $\phi$ and we are mostly interested in the case where $\phi = F$.}
\label{fig:my_label}
\end{figure}
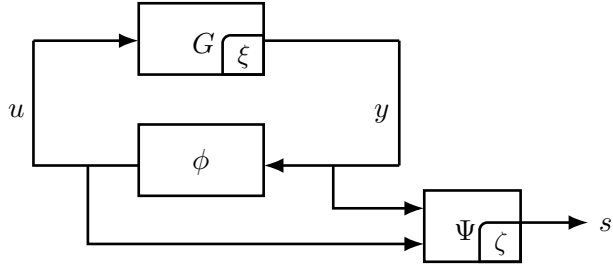

\noindent With these definitions in hand, we now state the main result of verifying exponential convergence. Basically, we build a Linear Matrix Inequality (LMI) to guide the search for the parameters of quadratic Lyapunov function in order to establish a rate bound.
\begin{restatable}{thm}{mainthm}
\label{thm:main-thm}
Consider the dynamical system~\eqref{eq:dynamical-system}.
Suppose the vector field $F$ satisfies the pointwise IQC $(\Psi, M)$ and define $(\hat A, \hat B, \hat C, \hat D)$ according to~\eqref{eq:psi}--\eqref{eq:dynamics-compact}. Consider the following linear matrix inequality (LMI):
    \begin{equation}\label{eq:sdp}
        \begin{bmatrix}
        \hat{A}^\top P \hat{A} - \rho^2 P & \hat{A}^\top P \hat{B}\\
        \hat{B}^\top P \hat{A} & \hat{B}^\top P \hat{B}
        \end{bmatrix} +
        \lambda \begin{bmatrix} \hat{C} & \hat{D} \end{bmatrix}^\top M \begin{bmatrix} \hat{C} & \hat{D} \end{bmatrix} \preceq 0.
    \end{equation}
    If this LMI is feasible for some $P \succ 0$, $\lambda \geq 0$ and $\rho > 0$\footnote{Note that $\rho$ is not necessarily smaller than $1$.}, we have
    \begin{equation}
        (x_{k+1} - x^*)^\top (P \otimes \iden_d) (x_{k+1} - x^*)  \leq \rho^2 (x_k - x^*)^\top (P \otimes \iden_d) (x_k - x^*).
    \end{equation}
    Consequently, for any $\xi_0$ and $\zeta_0 = \zeta^*$, we obtain
    \begin{equation}
        \|\xi_k - \xi^*\|_2^2 \leq \mathrm{cond}(P) \rho^{2k} \|\xi_0 - \xi^*  \|_2^2.
    \end{equation}
\end{restatable}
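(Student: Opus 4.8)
The plan is to establish the one-step contraction inequality first, and then iterate it. The LMI~\eqref{eq:sdp} is a matrix inequality over $\real^{?\times?}$ (in the "collapsed" dimension, before tensoring with $\iden_d$), so the natural first move is to Kronecker-multiply everything by $\iden_d$ and sandwich the resulting negative-semidefinite matrix between the vector $[(x_k-x^*)^\top,\ (u_k-u^*)^\top]^\top$ on both sides. Because the Kronecker product preserves the ordering $\preceq 0$ (tensoring a PSD matrix with $\iden_d$ stays PSD), we get
\begin{equation*}
\begin{bmatrix} x_k - x^* \\ u_k - u^* \end{bmatrix}^\top
\left(
\begin{bmatrix}
\hat A^\top P \hat A - \rho^2 P & \hat A^\top P \hat B \\
\hat B^\top P \hat A & \hat B^\top P \hat B
\end{bmatrix}\otimes\iden_d
+ \lambda \begin{bmatrix}\hat C & \hat D\end{bmatrix}^\top M \begin{bmatrix}\hat C & \hat D\end{bmatrix}\otimes\iden_d
\right)
\begin{bmatrix} x_k - x^* \\ u_k - u^* \end{bmatrix} \le 0.
\end{equation*}

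**Next I would identify the two quadratic forms that appear.** The first block, paired with $P\otimes\iden_d$, telescopes: using $x_{k+1}-x^* = \hat A(x_k-x^*) + \hat B(u_k-u^*)$ (which follows from~\eqref{eq:dynamics-compact} together with the fact that $(x^*,u^*)$ is a fixed point, so $x^* = \hat A x^* + \hat B u^*$), one checks by direct expansion that
\begin{equation*}
\begin{bmatrix} x_k - x^* \\ u_k - u^* \end{bmatrix}^\top
\!\!\left(\!
\begin{bmatrix}
\hat A^\top P \hat A - \rho^2 P & \hat A^\top P \hat B \\
\hat B^\top P \hat A & \hat B^\top P \hat B
\end{bmatrix}\!\otimes\iden_d\!\right)\!
\begin{bmatrix} x_k - x^* \\ u_k - u^* \end{bmatrix}
= (x_{k+1}-x^*)^\top(P\otimes\iden_d)(x_{k+1}-x^*) - \rho^2 (x_k-x^*)^\top(P\otimes\iden_d)(x_k-x^*).
\end{equation*}
The second block, paired with $M$, equals $\lambda\,(s_k - s^*)^\top (M\otimes\iden_d)(s_k-s^*)$ once we use $s_k - s^* = \hat C(x_k-x^*) + \hat D(u_k-u^*)$. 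Since $F$ satisfies the pointwise IQC $(\Psi,M)$ we have $(s_k-s^*)^\top (M\otimes\iden_d)(s_k-s^*)\ge 0$ for all $k$, and $\lambda\ge0$, so this term is nonnegative. Dropping it from the left-hand side of the sandwiched inequality only makes the inequality easier, yielding exactly $(x_{k+1}-x^*)^\top(P\otimes\iden_d)(x_{k+1}-x^*)\le \rho^2(x_k-x^*)^\top(P\otimes\iden_d)(x_k-x^*)$, which is the first displayed conclusion.

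**Finally I would iterate and convert back to the Euclidean norm on $\xi$.** Chaining the one-step bound $k$ times gives $(x_k-x^*)^\top(P\otimes\iden_d)(x_k-x^*)\le \rho^{2k}(x_0-x^*)^\top(P\otimes\iden_d)(x_0-x^*)$. With the initialization $\zeta_0=\zeta^*$ we have $x_0 - x^* = [(\xi_0-\xi^*)^\top, 0]^\top$, and since $P\succ0$ the extremal-eigenvalue (Rayleigh quotient) bounds give $\lambda_{\min}(P)\|\xi_k-\xi^*\|_2^2 \le (x_k-x^*)^\top(P\otimes\iden_d)(x_k-x^*)$ and $(x_0-x^*)^\top(P\otimes\iden_d)(x_0-x^*)\le \lambda_{\max}(P)\|\xi_0-\xi^*\|_2^2$ — here I use that the eigenvalues of $P\otimes\iden_d$ are just those of $P$ with multiplicity $d$, and that the $\zeta$-block of $x_k-x^*$ can only increase the left-hand quadratic form. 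Combining these three facts and dividing by $\lambda_{\min}(P)$ produces $\|\xi_k-\xi^*\|_2^2\le \mathrm{cond}(P)\,\rho^{2k}\|\xi_0-\xi^*\|_2^2$ with $\mathrm{cond}(P)=\lambda_{\max}(P)/\lambda_{\min}(P)$. The only mildly delicate point — the "main obstacle", such as it is — is bookkeeping the Kronecker structure correctly: making sure the LMI dimensions and the $\otimes\iden_d$ placement line up so that the algebraic identity in the second paragraph is literally true, and handling the fact that $(x^*,u^*)$ is a genuine fixed point of the combined system~\eqref{eq:dynamics-compact} (inherited from $u^*=\phi(y^*)$ and the fixed points of $\Psi$) so that the "$-x^*$" shifts pass cleanly through the affine dynamics. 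Everything else is a routine completion-of-squares / telescoping argument.
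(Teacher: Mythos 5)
Your proposal is correct and follows essentially the same route as the paper's proof: left/right-multiply the LMI by $[(x_k-x^*)^\top,(u_k-u^*)^\top]$ and its transpose, use the combined dynamics~\eqref{eq:dynamics-compact} together with the fixed-point relation to identify the Lyapunov difference and the IQC term, drop the nonnegative IQC term, and then telescope and bound via Rayleigh quotients and the initialization $\zeta_0=\zeta^*$. You are somewhat more explicit about the Kronecker-product bookkeeping than the paper, but the underlying argument is the same.
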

\begin{rem}
The LMI~\eqref{eq:sdp} can be extended to the case of multiple constraints with $(\Psi_i, M_i)$ (see~\citet[Page 12]{lessard2016analysis} for details).
\end{rem}
\begin{rem}
The positive definite quadratic function $V(x) \triangleq (x - x^*)^\top (P \otimes \iden_d) (x - x^*)$ is a Lyapunov function that certifies exponential convergence. This is the main difference from \citet[Theorem 4]{lessard2016analysis} in that the function $V(x)$ in their case cannot serve as a Lyapunov function because it does not strictly decrease over all trajectories. 
\end{rem}

To apply Theorem~\ref{thm:main-thm}, we seek to solve the semidefinite program (SDP) of finding the minimal $\rho$ such that the LMI~\eqref{eq:sdp} is feasible. For simple algorithms, one can typically solve the SDP analytically. Nevertheless, one may only get a numerical proof when the algorithm of interest is complicated and the resulting SDP is hard to solve.
The solution yields the best convergence rate that can be certified by quadratic Lyapunov functions. We remark that it \emph{automatically} searches for a quadratic Lyapunov function for proving exponetial convergence by solving the SDP. This is extremely convenient compared to designing ad-hoc Lyapunov functions on an algorithm-by-algorithm basis. Moreover, by inspecting the corresponding $\lambda_i$ of constraint $(\Psi_i, M_i)$, we could tell if the constraint or assumption is redundant or not. 
Moreover, this framework makes it easy to analyze the performance of optimization algorithms for time-varying systems, as we will show in the next section.

\section{IQCs for Variational Inequalities}\label{sec:iqc-vi}
To apply Theorem~\ref{thm:main-thm} to smooth and strongly-monotone variational inequalities, we will derive two sets of IQCs describing the vector field $F$: sector IQCs and \emph{off-by-one pointwise} IQCs. According to Assumptions~\ref{ass:monotonicity} and~\ref{ass:lipschitz}, the constraints~\eqref{eq:monotone} and \eqref{eq:lispchitz} hold over the whole domain. Hence, it has essentially infinite number of constraints and is therefore hard to use. The key idea of the following two sets of IQCs is to find the necessary conditions (but not sufficient) of smoothness and strongly-monotonicity by discretizing the constraints to finite number of $(\bz_1, \bz_2)$ pairs.
This is equivalent to a relaxation to the original problem since functions which are not $L$-Lipschitz and $m$-strongly-monotone can potentially satisfy the discretized conditions. Hence in principle, we need to make the discretized conditions to be as close to the original necessary and sufficient conditions as possible.

We first introduce two sector IQCs, which takes the discretization of $(y_k, y^*)$ with $y_k$ the output of iteration $k$ and $y^*$ the output of the stationary state.
\begin{restatable}[Sector IQCs]{lem}{sector}
\label{lem:sector-iqc}
Suppose vector field $F_k$ is $m$-strongly monotone and $L$-Lipschitz for all $k$, if $u_k = F_k(y_k)$, then $\phi := (F_0, F_1, ...)$ satisfies the \textbf{pointwise IQCs} defined by
\begin{equation}
    \Psi_1 = \Psi_2 = \left[ 
    \begin{array}{c|cc}
        \zero_d & \zero_d & \zero_d \\\hline
        \zero_d & \iden_d & \zero_d \\
        \zero_d & \zero_d & \iden_d \\ 
    \end{array}
    \right], \quad
    M_1 = \begin{bmatrix}
    L^2 \iden_d & \zero_d \\
    \zero_d & - \iden_d
    \end{bmatrix},
    \quad
    M_2 = \begin{bmatrix}
    -2 m \iden_d & \iden_d \\
    \iden_d & \zero_d
    \end{bmatrix}.
\end{equation}
We have corresponding quadratic inequalities:
\begin{equation*}
    \begin{bmatrix}
    y_k - y^* \\
    u_k - u^*
    \end{bmatrix}^\top
    \begin{bmatrix}
    L^2 \iden_d & \zero_d \\
    \zero_d & - \iden_d
    \end{bmatrix}
    \begin{bmatrix}
    y_k - y^* \\
    u_k - u^*
    \end{bmatrix} \geq 0 
\quad\text{and}\quad
    \begin{bmatrix}
    y_k - y^* \\
    u_k - u^*
    \end{bmatrix}^\top
    \begin{bmatrix}
    -2m \iden_d & \iden_d \\
    \iden_d & \zero_d
    \end{bmatrix}
    \begin{bmatrix}
    y_k - y^* \\
    u_k - u^*
    \end{bmatrix} \geq 0. 
\end{equation*}
\end{restatable}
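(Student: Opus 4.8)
The plan is to verify each of the two quadratic inequalities directly from the defining properties of $F_k$, and then observe that they are exactly the pointwise IQCs generated by the stated $(\Psi_i, M_i)$. First note that $\Psi_1 = \Psi_2$ has $A_\Psi = \zero_d$, $C_\Psi = \zero_d$, and $D_\Psi^y = D_\Psi^u = \iden_d$, so from~\eqref{eq:ass-ref} we get $s_k = \begin{bmatrix} y_k^\top & u_k^\top \end{bmatrix}^\top$, and the fixed point is $s^* = \begin{bmatrix} {y^*}^\top & {u^*}^\top \end{bmatrix}^\top$. Hence $(s_k - s^*)^\top M_i (s_k - s^*)$ is precisely the quadratic form in $(y_k - y^*, u_k - u^*)$ displayed at the end of the lemma; it remains to show each is nonnegative.

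For $M_1$: expanding gives $L^2 \|y_k - y^*\|_2^2 - \|u_k - u^*\|_2^2$. Since $u_k = F_k(y_k)$ and $u^* = F_k(y^*)$ (the stationary output is a fixed point, so $u^* = F_k(y^*)$ holds for every $k$), Assumption~\ref{ass:lipschitz} applied to $F_k$ yields $\|u_k - u^*\|_2 = \|F_k(y_k) - F_k(y^*)\|_2 \leq L \|y_k - y^*\|_2$, and squaring gives the claim. For $M_2$: expanding $(s_k - s^*)^\top M_2 (s_k - s^*)$ gives $-2m\|y_k - y^*\|_2^2 + 2 (y_k - y^*)^\top (u_k - u^*)$, which is nonnegative exactly when $(F_k(y_k) - F_k(y^*))^\top (y_k - y^*) \geq m \|y_k - y^*\|_2^2$ — i.e. the $m$-strong-monotonicity of $F_k$ from Assumption~\ref{ass:monotonicity}, instantiated at the pair $(y_k, y^*)$. (One should double-check the cross-term bookkeeping: the off-diagonal block $\iden_d$ appears twice in the symmetric quadratic form, producing the factor $2$, so the inequality matches~\eqref{eq:monotone} up to the harmless factor of $2$.)

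There is no real obstacle here: the lemma is essentially a repackaging of the two assumptions as pointwise quadratic constraints, and the only thing to be careful about is the identification of $s_k$ with $(y_k, u_k)$ and the consistency of the fixed point $u^* = F_k(y^*)$ across all $k$ — which is what the phrase ``$y^*$ the output of the stationary state'' is meant to guarantee. The mildly delicate point worth a sentence in the proof is that these are only \emph{necessary} conditions: they constrain $F_k$ only through the particular pairs $(y_k, y^*)$ visited along a trajectory (and the fixed point), not over all of $\real^d \times \real^d$, which is exactly the relaxation discussed in the surrounding text.
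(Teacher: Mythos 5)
Your proposal is correct and follows essentially the same approach as the paper's proof, which merely states that the two quadratic inequalities follow immediately from \eqref{eq:monotone} and \eqref{eq:lispchitz}; you simply spell out the identification $s_k = (y_k, u_k)$ from the given $\Psi$ and the expansion of the two quadratic forms, and you rightly flag the fixed-point consistency $u^* = F_k(y^*)$ for all $k$, which the paper leaves implicit.
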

As we will show in the next few sections, the introduced set of sector IQCs is far from sufficient for some algorithms because it allows the vector field $F_k$ to be time-varying\footnote{In convex optimization~\citep{hazan2016introduction}, it is equivalent to allowing the losses to be adversarially chosen.}, therefore leading to very conservative estimate of convergence rates. As a remedy, we can add $(y_{k-1}, y_k)$ pairs to enforce the consistency of the vector field over time, which leads to the following off-by-one pointwise IQCs. We stress that the proposed off-by-one pointwise IQC is different from the one in~\citet[Lemma 8]{lessard2016analysis} in that their off-by-one IQC is a more complicated $\rho$-hard IQC (see Definition 3 in \citet{lessard2016analysis} for details) rather than a pointwise IQC. This is due to the fact that the first-order oracle in convex minimization involves the function value $f$ and they have to use the $\rho$-hard IQC to get tight bounds.
\begin{restatable}[Off-by-one pointwise IQCs]{lem}{offbyone}\label{lem:off-by-one}
Suppose $F$ is $m$-strongly monotone and $L$-Lipschitz. If $u_k = F(y_k)$, then $\phi \coloneqq (F, F, ...)$ satisfies the \textbf{pointwise IQCs} defined by
\begin{equation}
    \Psi_1 = \Psi_2 = \left[ 
    \begin{array}{cc|c|c}
        \zero_d & \zero_d & \iden_d & \zero_d \\
        \zero_d & \zero_d & \zero_d & \iden_d \\ \hline
        -\iden_d & \zero_d & \iden_d & \zero_d \\ 
        \zero_d & -\iden_d & \zero_d & \iden_d \\ 
    \end{array}
    \right], \quad
    M_1 = \begin{bmatrix}
    L^2 \iden_d & \zero_d \\
    \zero_d & - \iden_d
    \end{bmatrix},
    \quad
    M_2 = \begin{bmatrix}
    -2 m \iden_d & \iden_d \\
    \iden_d & \zero_d
    \end{bmatrix}.
\end{equation}
We have corresponding quadratic inequalities:
\begin{equation}\label{eq:off-by-one-const}
\begin{aligned}
    L^2 \|{y}_{k+1} -  {y}_k \|_2^2 - \|{u}_{k+1} - {u}_k \|_2^2 &\geq 0, \\
    ({y}_{k+1} - {y}_{k})^\top(u_{k+1} - {u}_k - m ({y}_{k+1} -  {y}_{k})) &\geq 0.
\end{aligned}
\end{equation}
\end{restatable}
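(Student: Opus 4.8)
The plan is to verify directly that the two quadratic inequalities in \eqref{eq:off-by-one-const} follow from Assumptions~\ref{ass:monotonicity} and~\ref{ass:lipschitz}, and then to check that the matrices $(\Psi_1, M_1)$ and $(\Psi_2, M_2)$ as written do encode exactly these two inequalities. The first task is essentially immediate: since $u_k = F(y_k)$ for a single fixed mapping $F$, applying the $L$-Lipschitz bound \eqref{eq:lispchitz} with $\bz_1 = y_{k+1}$ and $\bz_2 = y_k$ gives $\|u_{k+1} - u_k\|_2 = \|F(y_{k+1}) - F(y_k)\|_2 \le L\|y_{k+1} - y_k\|_2$, which is the first line of \eqref{eq:off-by-one-const} after squaring. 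Likewise, applying the $m$-strong monotonicity bound \eqref{eq:monotone} with the same two points gives $(y_{k+1} - y_k)^\top(u_{k+1} - u_k) \ge m\|y_{k+1} - y_k\|_2^2$, which is the second line after moving the term to the left. The crucial point, which must be emphasized, is that these substitutions are legitimate precisely because $\phi = (F, F, \dots)$ uses the \emph{same} $F$ at every step; this is why (unlike Lemma~\ref{lem:sector-iqc}) no reference to a stationary point $y^*$ is needed, and why the consistency of the vector field across consecutive iterates is captured.

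Next I would make the translation to the $(\Psi, M)$ formalism explicit. The map $\Psi$ defined by the stated block matrices, acting on the pair of sequences $(y, u)$ with $\zeta_k = [y_{k-1}^\top, u_{k-1}^\top]^\top$ as its state, produces the output $s_k = C_\Psi \zeta_k + D_\Psi^y y_k + D_\Psi^u u_k$; reading off the blocks, $s_k = [\,(y_k - y_{k-1})^\top,\ (u_k - u_{k-1})^\top\,]^\top$ (after shifting the index by one). One then computes $(s_k - s^*)^\top M_1 (s_k - s^*)$ and $(s_k - s^*)^\top M_2 (s_k - s^*)$; since at a fixed point $y_k = y_{k-1} = y^*$ and $u_k = u_{k-1} = u^*$ we have $s^* = 0$, these quadratic forms reduce exactly to $L^2\|y_k - y_{k-1}\|_2^2 - \|u_k - u_{k-1}\|_2^2$ and $2(y_k - y_{k-1})^\top(u_k - u_{k-1}) - 2m\|y_k - y_{k-1}\|_2^2$ respectively, matching \eqref{eq:off-by-one-const} up to the harmless index relabeling $k \mapsto k+1$ and an overall positive factor of $2$ in the second form. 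Hence both are pointwise IQCs in the sense of the Definition preceding Theorem~\ref{thm:main-thm}.

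There is really no serious obstacle here — the content is a one-line application of each of the two assumptions, and the bulk of the write-up is bookkeeping to confirm that the block-matrix description of $\Psi$ and $M$ faithfully reproduces those two scalar inequalities. The one place to be careful is the state-shift in $\Psi$: the auxiliary state $\zeta_k$ must be initialized and indexed so that $s_k$ genuinely depends on $(y_k, y_{k-1}, u_k, u_{k-1})$ and not, say, on $(y_{k+1}, \dots)$, and one should note that the inequalities then hold for all $k \ge 1$ (or for all $k \ge 0$ after the index shift, with the convention that $\zeta_0 = \zeta^*$). I would also briefly remark, as the excerpt already foreshadows, that a pointwise IQC suffices here — in contrast to \citet[Lemma 8]{lessard2016analysis}, whose off-by-one IQC must be $\rho$-hard — because the first-order oracle for a variational inequality returns only $F(y)$, with no function-value component that would force a weighted (hard) constraint.
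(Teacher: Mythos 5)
Your proposal is correct and takes essentially the same approach as the paper's proof: you obtain the two inequalities by applying \eqref{eq:monotone} and \eqref{eq:lispchitz} at the pair $(y_{k+1},y_k)$, and then read off from $\Psi$ that $\zeta_k=[y_{k-1}^\top,u_{k-1}^\top]^\top$ and $s_k=[(y_k-y_{k-1})^\top,(u_k-u_{k-1})^\top]^\top$ with $s^*=0$, so $(s_k-s^*)^\top M_i(s_k-s^*)$ reproduces \eqref{eq:off-by-one-const}. Your additional observations (the harmless factor of $2$ in the $M_2$ quadratic form, the reliance on a single fixed $F$, and the contrast with the $\rho$-hard off-by-one IQC of \citet[Lemma 8]{lessard2016analysis}) are accurate and merely spell out details the paper leaves implicit.
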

In principle, a convex combination of the sector and off-by-one pointwise IQCs is still not sufficient, though we can further add off-by-$n$ pointwise IQCs (i.e., $(y_{k-n}, y_k)$) to make it less conservative. 
To exactly characterize the nonlinearity of vector field $F$, it requires us to introduce the following interpolation condition (insipred by~\citet{taylor2017smooth}):
\begin{definition}[$\{m, L\}$-interpolation]
    Let $I$ be an index set, and consider the set of tuples $S = \{(y_i, u_i) \}_{i \in I}$. Then set $S$ is $\{m, L\}$-interpolable if and only if there exists a $m$-strongly monotone and $L$-Lipschitz vector field $F$ such that $u_i = F(y_i)$ for all $i \in I$.
\end{definition}
At first glance, it might seem that all pairs $(y_i, y_j)$ of indices $i \in I$ and $j \in I$ satisfying \eqref{eq:monotone} and~\eqref{eq:lispchitz} would be necessary and sufficient for $\{m, L\}$-interpolation. However, it was shown in~\citet[Proposition 3]{ryu2020operator} that it is not the case. In other words, including all off-by-$n$ pointwise IQCs ($n = 1, 2, ...$) is still not sufficient to fully describe the nonlinearity. 
So in general, the rate bounds certified by our IQC framework could be loose even with all off-by-$n$ IQCs and one might like to use as many IQCs as possible to make the obtained bounds less conservative.
Nevertheless,
we find in practice that it is possible to certify tight convergence bounds using only a small number of IQCs for algorithms we consider. In particular, the sector IQCs in Lemma~\ref{lem:sector-iqc} are sufficient\footnote{We mean the obtained rate bound matches the known lower bound exactly.} to provide a tight bound for GD, and adding more IQCs will not improve the rate bound obtained by our IQC framework\footnote{In particular, the corresponding $\lambda_i$ of newly added constraint $(\Psi_i, M_i)$ after solving the SDP would be zero up to numerical precision, manifesting the constraint is redundant.}. Formally, we offer the following conjecture based on our numerical simulations:
\begin{conj}\label{conj}
    For first-order algorithms with $T$ steps of memory, we only need off-by-$n$ pointwise IQCs up to $T$ to get the tightest convergence rate in our framework. In other words, adding off-by-$n$ pointwise IQCs with $n > T$ in SDP~\eqref{eq:sdp} will not improve the bound.
\end{conj}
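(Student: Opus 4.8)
The plan is to prove the conjecture by passing to the \emph{dual} of the rate SDP and analyzing the structure of worst-case trajectories. Fix an LTI realization $(A,B,C,D)$ with $T$ steps of memory and let $\mathcal{I}_T$ denote the family of IQCs consisting of the sector IQCs of Lemma~\ref{lem:sector-iqc} together with all off-by-$n$ pointwise IQCs ($1\le n\le T$) generalizing Lemma~\ref{lem:off-by-one} (i.e.\ applied to the pairs $(y_{k-n},y_k)$). Let $\rho_T$ be the infimal $\rho$ for which the multi-constraint version of~\eqref{eq:sdp} is feasible with $\mathcal{I}_T$. Since enlarging the IQC family only adds multipliers $\lambda_i\ge 0$, we automatically have $\rho_{T'}\le \rho_T$ for $T'\ge T$; the conjecture is the reverse inequality $\rho_{T'}\ge\rho_T$, i.e.\ the IQCs with $n>T$ are never used. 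The first step is to set up the characterization $\rho_T^2 = \sup\big\{\rho^2 : \exists\ \text{pseudo-trajectory}\ \{(x_k,u_k,s_k^{(i)})\}\ \text{of~\eqref{eq:dynamics-compact} with } s_k^{(i){\top}}M_i s_k^{(i)}\ge 0\ \forall i, \forall k,\ \text{and } V(x_{k+1})\ge \rho^2 V(x_k)\ \text{for some } P\succ 0\big\}$; granting an S-procedure losslessness statement, proving $\rho_{T'}\ge\rho_T$ then reduces to exhibiting, for every $\rho<\rho_T$, a pseudo-trajectory of rate $\ge\rho$ that satisfies \emph{all} off-by-$n$ IQCs ($n\ge 1$), not merely those with $n\le T$.

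Next I would establish the structural lemma that the worst-case pseudo-trajectory for $\mathcal{I}_T$ can be chosen to lie in a low-dimensional invariant subspace of the closed-loop operator and to have steady-state form $y_k=\rho^k p(k)$, $u_k=\rho^k q(k)$ with $p,q$ vector-valued periodic (in the simplest algorithms, constant up to a rotation — a single Jordan block or conjugate eigenpair). Because the algorithm has only $T$ steps of memory, the output sequence along such a subspace obeys a scalar linear recursion of order at most $T{+}1$, so $(y_{k-n},u_{k-n})$ is an affine image of the window $(y_k,\dots,y_{k-T},u_k,\dots,u_{k-T})$ whenever $n>T$. I would then check that, on any sequence of this form satisfying the off-by-$\le T$ IQCs, every off-by-$n$ IQC with $n>T$ holds automatically: the consistency enforced on the overlapping length-$(T{+}1)$ windows, combined with the order-$\le(T{+}1)$ recursion, expresses the quadratic form relating $y_{k-n}$ and $y_k$ as a nonnegative combination of the forms on adjacent windows plus a Schur-complement PSD term. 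Equivalently, one constructs an explicit $\{m,L\}$-interpolating map for the finitely many windowed pairs and evaluates it on the far pair. Pulling this identity back through $(\hat A,\hat B,\hat C,\hat D)$ shows directly, on the primal side, that the off-by-$n$ constraint matrix in~\eqref{eq:sdp} ($n>T$) lies in the cone generated by the off-by-$\le T$ constraint matrices and the Lyapunov slack, which is exactly the statement that adding it cannot lower the feasible $\rho$.

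I expect two obstacles. The first is losslessness of the S-procedure with several pointwise IQCs: strong duality between~\eqref{eq:sdp} and the worst-case-trajectory problem is not automatic beyond one quadratic constraint, so either that must be argued for this particular constraint structure, or the whole proof must be run on the primal side via the explicit conic-combination identity above (which is, for each fixed $T$, a finite algebraic verification). The second and deeper obstacle is the low-dimensional/steady-state structure of the worst case — the analogue of the classical fact that worst cases for the circle criterion are LTI, but genuinely harder with multiple active constraints; moreover Proposition~3 of~\citet{ryu2020operator} warns that pairwise interpolation is never globally lossless, so the trajectory we build will in general \emph{not} be realizable by any single $m$-strongly-monotone, $L$-Lipschitz $F$, and the argument must rely only on the finite list of IQC inequalities rather than on the existence of such an $F$. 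For this reason I would first settle the scalar case $d=1$ (where the $P\otimes\iden_d$ structure is vacuous and the recursions are literally scalar), prove the window-redundancy identity there, then lift to general $d$ using that every matrix in~\eqref{eq:sdp} is a Kronecker product with $\iden_d$; I would also be prepared to impose a mild controllability/observability hypothesis on $(A,B,C,D)$ — satisfied by GD, OG and NM — to exclude degenerate realizations for which the "number of steps of memory" is ambiguous and the count $T$ would be the wrong threshold.
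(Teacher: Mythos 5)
The statement you have attempted to prove is \emph{not} proved in the paper: it is labeled and presented explicitly as a conjecture, and the authors offer only numerical evidence in support of it (the grid of plots referenced in the discussion around Figure~\ref{fig:conjecture}, where the feasible $\rho$ values and the multipliers $\lambda_i$ on the added off-by-two constraints are inspected and found to vanish for GD and several one-step-memory algorithms). There is consequently no paper proof against which to compare your argument line by line; what you have written is a research plan for how one might try to settle the conjecture, not a comparison with an existing proof.

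Taken on its own terms, your plan is a reasonable strategy, and you are admirably honest that it rests on two lemmas you have not supplied: (i) losslessness of the S-procedure when the rate LMI~\eqref{eq:sdp} carries several pointwise IQCs with independent multipliers, which is what would let you pass from a conic (primal) redundancy statement to a worst-case pseudo-trajectory picture, and (ii) the structural claim that the extremal pseudo-trajectory can be taken of steady-state form $y_k=\rho^k p(k)$, $u_k=\rho^k q(k)$ with $p,q$ low-dimensional. Without (ii) the central step of your argument does not go through as stated: in the pseudo-trajectory formulation the $u_k$ are \emph{free} variables coupled only by the quadratic constraints, and the linear recursion coming from $T$ steps of memory expresses past $y$'s in terms of past $u$'s, not the other way around. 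In particular, $(y_{k-n},u_{k-n})$ for $n>T$ is \emph{not}, in general, an affine image of the window $(y_k,\dots,y_{k-T},u_k,\dots,u_{k-T})$: the $u_{k-T-1},\dots,u_{k-n}$ are extra degrees of freedom. The ansatz in (ii) is precisely what would collapse them onto the window, so the whole proof stands or falls with that lemma — and proving it for multiple simultaneously active quadratic constraints is, as you say, considerably harder than the single-constraint analogue. You also rightly flag that Proposition~3 of~\citet{ryu2020operator} blocks any shortcut via an interpolating $F$; the argument really does have to stay at the level of the finite IQC list. In its current form, then, this is a credible outline of an attack on an open problem, not a proof, and it should be described as such.
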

We numerically verified our conjecture for GD and algorithms with one step of memory (see Figure~\ref{fig:conjecture})\footnote{We also tested on some algorithms with two step of memory with randomly sampled parameters.}. For instance, we notice that a combination of the sector and off-by-one pointwise IQCs is enough for algorithms with one step of memory~\eqref{eq:second-diff}. Interestingly, such combination is far from enough to get tight bounds for minimization problem.

\subsection{Warm-up: Analysis of Gradient Method}
We first warm up with the simplest algorithm -- GD. The recursion is given by
\begin{equation}
    \bz_{k+1} = \bz_k - \eta F(\bz_k).
\end{equation}
We will analyze this algorithm by applying Theorem~\ref{thm:main-thm}. The first thing is to find proper IQCs for GD. By the assumptions, we know the vector field $F$ is $m$-strongly monotone and $L$-Lipschitz. We may start with the sector IQCs defined in Lemma~\ref{lem:sector-iqc}\footnote{As we argue in Conjecture~\ref{conj}, adding more IQCs probably will not improve the bound of GD.}.

By exploiting the structure of the problem (see~\citet[section 4.2]{lessard2016analysis}), we are able to reduce the problem to the following SDP by simply setting $P = 1$ without loss of generality:
\begin{equation}\label{eq:sdp-gda}
    \begin{bmatrix}
    1 - \rho^2 & -\eta \\
    -\eta & \eta^2
    \end{bmatrix} + 
    \lambda_1 
    \begin{bmatrix}
    L^2 & 0 \\
    0 & -1
    \end{bmatrix} + 
    \lambda_2 
    \begin{bmatrix}
    -2m & 1 \\
    1 & 0
    \end{bmatrix} \preceq 0.
\end{equation}
We remark that the SDP~\eqref{eq:sdp-gda} is independent of the dimension $d$.
Using Schur complements~\citep{haynsworth1968schur}, it is equivalent to
\begin{equation}\label{eq:gda-reduction}
    \lambda_1 \geq \eta^2 \quad \lambda_2 \geq 0 \quad \rho^2 \geq 1 + \lambda_1 L^2 - 2\lambda_2 m + \frac{(\lambda_2 - \eta)^2}{\lambda_1 - \eta^2}.
\end{equation}
By analyzing the lower bound on $\rho^2$ in \eqref{eq:gda-reduction}, one can easily show that $\rho^2 \geq 1 - 2m \eta + L^2 \eta^2$. Optimizing over $\eta$, we get $\rho^2 \geq 1 - \frac{1}{\kappa^2}$, matching the lower bound of convergence rate of GD~\citep{azizian2020accelerating}. Notably, we only impose sector-bounded constraints in this section, which means the vector field can change over time (time-varying system).

After giving the warm-up example, we now present a deep result of GD for its optimality on time-varying systems.
Particularly, we show that GD with stepsize $\eta = m / L^2$ achieves the fastest possible worst-case convergence rate not only among all tunings of GD, but among any algorithm where $\bz_{k+1}$ depends linearly on $\{\bz_k, \bz_{k-1}, ..., \bz_{k-l}\}$ for some fixed $l$.
\begin{restatable}{thm}{optgd}\label{thm:opt-gda}
    With only the sector IQCs (i.e., the system can be time-varying), the best worst-case convergence rate in solving SDP~\eqref{eq:sdp} is achieved by GD with stepsize $\eta = m/L^2$ among all algorithms representable as a linear time-invariant system with finite state.
\end{restatable}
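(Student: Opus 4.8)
The plan is to sandwich the optimal certifiable rate: a worst-case linear instance lower-bounds it for an \emph{arbitrary} admissible algorithm, while the warm-up calculation shows GD with $\eta=m/L^2$ attains that bound.

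First I would pin down the hardest linear vector fields allowed by the sector IQCs. Identifying $\real^2$ with $\comp$, the map $F(\bz)=\lambda\bz$ is exactly $m$-strongly-monotone and $L$-Lipschitz precisely when $\lambda$ lies in the circular segment $D:=\{\lambda\in\comp:\operatorname{Re}\lambda\ge m,\ |\lambda|\le L\}$, so each such $F$ satisfies Lemma~\ref{lem:sector-iqc} (indeed with equality in both IQCs on the boundary of $D$). If the LMI~\eqref{eq:sdp} is feasible with rate $\rho$ and some $P\succ0$, then Theorem~\ref{thm:main-thm} certifies convergence at rate $\rho$ on the problem $\phi=F=\lambda\,\iden$; on this linear problem the closed loop is $\xi_{k+1}=(A+\lambda BC)\xi_k$, whose generic asymptotic rate is the spectral radius $\operatorname{sr}(A+\lambda BC)$, so $\rho\ge\operatorname{sr}(A+\lambda BC)$ for every $\lambda\in D$. (We may assume $D=0$, a minimal realization, and use that the SDP is dimension-free.) Hence the best rate certifiable with the sector IQCs obeys $\rho^\star(G)\ge\sup_{\lambda\in D}\operatorname{sr}(A+\lambda BC)$ for every algorithm $G=(A,B,C)$.

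Second, for GD with $\eta=m/L^2$ the closed loop is the scalar $1-\eta\lambda$, so $\sup_{\lambda\in D}\operatorname{sr}(A+\lambda BC)=\sup_{\lambda\in D}|1-\eta\lambda|$; since $|1-\eta\lambda|^2=1-2\eta\operatorname{Re}\lambda+\eta^2|\lambda|^2$ is maximized over $D$ at the corners $\lambda=m\pm i\sqrt{L^2-m^2}$ with value $1-2\eta m+\eta^2L^2$, this supremum equals $\sqrt{1-2\eta m+\eta^2L^2}$, which at $\eta=m/L^2$ is exactly $\sqrt{1-1/\kappa^2}$. Combined with the warm-up, GD with $\eta=m/L^2$ certifies precisely $\rho=\sqrt{1-1/\kappa^2}$, so it only remains to prove $\sup_{\lambda\in D}\operatorname{sr}(A+\lambda BC)\ge\sqrt{1-1/\kappa^2}$ for \emph{every} admissible first-order method. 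For that step I would pass to the transfer function $\hat G(\mu)=C(\mu\iden-A)^{-1}B$: the closed-loop poles at gain $\lambda$ solve $1=\lambda\hat G(\mu)$, so the attainable poles over $\lambda\in D$ form $\mathcal R=\{\mu:1/\hat G(\mu)\in D\}$, and we need a $\mu\in\mathcal R$ with $|\mu|\ge\sqrt{1-1/\kappa^2}$. The one structural input is that a genuine first-order method converges to the VI solution for every admissible \emph{affine} $F$, which forces $\hat G$ to carry an ``integrator'', i.e.\ a pole at $\mu=1$, so $1/\hat G(1)=0$ and $\hat G\not\equiv0$. I would then argue by contradiction: assuming $1/\hat G$ maps the whole disk $\{|\mu|\le\sqrt{1-1/\kappa^2}\}$ outside $D$, track the branch of closed-loop poles emanating from $\mu=1$ at $\lambda=0$ as $\lambda$ is driven up the boundary segment $\{m+it:0\le t\le\sqrt{L^2-m^2}\}$ to the corner $\lambda=m+i\sqrt{L^2-m^2}$, and run an argument-principle / Rouché count on the circle $|\mu|=\sqrt{1-1/\kappa^2}$ (which does not enclose the zero $\mu=1$ of $1/\hat G$, while $D$ lies at distance exactly $m$ from $0$ and reaches out to radius exactly $L$) to conclude that this branch must leave that disk before $\lambda$ reaches the corner.

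The last step is the main obstacle. The sector constraint restricted to \emph{real} gains $\lambda\in[m,L]$ only yields the weak bound $\rho\ge1-1/\kappa^2<\sqrt{1-1/\kappa^2}$, so one must genuinely exploit the complex ``rotational'' gains near the corner of $D$, and uniformly over all rational $\hat G$ with a pole at $1$ — in particular ruling out that extra internal states (extra closed-loop poles) could be arranged to keep the entire pole locus inside a strictly smaller disk. Making the winding bookkeeping rigorous, including the degenerate cases (higher-order integrators, or zeros of $\hat G$ landing on the critical circle), is where the real work lies; an attractive alternative is to run the whole argument on the dual of the SDP~\eqref{eq:sdp}, whose optimal solution should directly encode the worst-case rotational trajectory $\lambda=m+i\sqrt{L^2-m^2}$ and thereby certify the matching lower bound.
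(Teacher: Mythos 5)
Your plan reduces the claim to a lower bound on the \emph{LTI worst case}: you argue $\rho^\star(G)\ge\sup_{\lambda\in D}\operatorname{sr}(A+\lambda BC)$ (correct), and then try to show $\sup_{\lambda\in D}\operatorname{sr}(A+\lambda BC)\ge\sqrt{1-1/\kappa^2}$ for every admissible first-order method. That second inequality is not merely hard to prove via winding-number bookkeeping --- it is not true. Theorem~\ref{thm:opt-gda} is a statement about what the \emph{sector-IQC SDP} can certify, and the sector IQCs allow the vector field $F_k$ to change adversarially at every step. The SDP penalizes algorithms that are not robust to this time variation, and this penalty has no counterpart in the LTI spectral-radius picture. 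Optimistic gradient is the counterexample: on time-invariant linear problems $F=\lambda\iden$ its closed-loop spectral radius is $1-\Theta(1/\kappa)$, so $\sup_{\lambda\in D}\operatorname{sr}(A_{\mathrm{OG}}+\lambda B_{\mathrm{OG}}C_{\mathrm{OG}})<\sqrt{1-1/\kappa^2}$ once $\kappa$ is even modestly large --- yet, as the paper itself observes numerically (Figure~\ref{fig:OGDA} and the surrounding discussion), the sector-IQC SDP for OG with $\eta=1/(4L)$ is infeasible for any $\rho<1$. The LTI lower bound is therefore strictly loose for OG (and for NM), and it cannot deliver the conclusion $\rho^\star(G)\ge\sqrt{1-1/\kappa^2}$. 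The gap cannot be closed by a sharper Rouch\'e argument because the inequality you are trying to certify is simply false for part of the admissible algorithm class.

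The paper takes a fundamentally different and more direct approach. It parametrizes the algorithm through a pure integrator $K(z)\tfrac{1}{z-1}$, performs a loop transformation that diagonalizes the aggregated sector constraint, and then invokes the Gahinet--Apkarian elimination lemma to remove the unknown controller realization $(A_K,B_K,C_K,D_K)$ from the feasibility condition entirely. What remains, after Schur complements, is the scalar inequality $\rho^2\ge 1-2\tfrac{\lambda_1}{\lambda_2}m+\bigl(\tfrac{\lambda_1}{\lambda_2}\bigr)^2L^2$, which is minimized at $\tfrac{\lambda_1}{\lambda_2}=m/L^2$ and gives $\rho^2\ge1-1/\kappa^2$ \emph{uniformly over all} $K(z)$ of finite (arbitrary) order. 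This is the right object to bound, because it is the SDP feasibility condition itself rather than a proxy built from a subclass of admissible $F$. If you want to repair your argument, you must attack the SDP directly in this way (or, as you hint, dualize it); there is no path through LTI worst-case spectral radii.
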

To put it differently, when the system is time-varying, we cannot improve our upper bound by using more complex algorithms. 

\subsection{Analysis of Proximal Point Method}

While GD is discretizing vector field flow with forward Euler method 
and suffers from overshotting problem, proximal point method (PPM)~\citep{rockafellar1976monotone, parikh2014proximal} adopts backward Euler method and is more stable. 
\begin{equation}
    \bz_{k+1} = \bz_k - \eta F(\bz_{k+1}).
\end{equation}
Although PPM is in general not efficiently implementable, it is largely regarded as a ``conceptual" guiding principle for accelerating optimization algorithms~\citep{drusvyatskiy2017proximal, ahn2020proximal}. 
Indeed, \citet{mokhtari2020unified} showed that both OG and EG are approximating PPM in the context of smooth games. Nevertheless, the convergence analysis of PPM is in general more involved than gradient descent method. Here we follow the same IQC pipeline to analyze its convergence rate. Notably, PPM can still be expressed as a discrete linear system as in \eqref{eq:dynamical-system} but with the matrix $D = -\eta \iden_d$. Similar to GD, we impose the sector IQCs and reduce the problem to the following SDP:
\begin{equation}\label{eq:sdp-ppm}
    \begin{bmatrix}
    1 - \rho^2 & -\eta \\
    -\eta & \eta^2
    \end{bmatrix} + 
    \lambda_1 
    \begin{bmatrix}
    L^2 & -\eta L^2 \\
    -\eta L^2 & \eta^2 L^2 -1
    \end{bmatrix} + 
    \lambda_2 
    \begin{bmatrix}
    -2m & 2\eta m + 1 \\
    2 \eta m + 1 & -2\eta^2 m - 2\eta
    \end{bmatrix} \preceq 0.
\end{equation}
Our goal is to find the minimal $\rho$ such that this LMI is feasible. To achieve that, we can use Schur complements and optimize $\lambda_1, \lambda_2$ to lower bound $\rho$. Towards this end, we are able to prove the exponential convergence of PPM by solving the LMI \eqref{eq:sdp-ppm}.
\begin{restatable}{thm}{ppm}
    Under Assumption~\ref{ass:monotonicity} and~\ref{ass:lipschitz}, PPM converges linearly with any positive $\eta$.
    \begin{equation}
        \|\bz_k - \bz^* \|_2^2 \leq \left(\frac{1}{1 + 2\eta m}\right)^k \|\bz_0 - \bz^* \|_2^2, \quad \text{for all } k \geq 0.
    \end{equation}
\end{restatable}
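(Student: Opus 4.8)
The plan is to solve the LMI~\eqref{eq:sdp-ppm} at the target rate $\rho^2 = \tfrac{1}{1+2\eta m}$ and then invoke Theorem~\ref{thm:main-thm}. The reduction of the general LMI~\eqref{eq:sdp} to the $2\times 2$ form~\eqref{eq:sdp-ppm} has already been set up (with $P=1$; since PPM carries no internal memory and the sector-IQC filter $\Psi$ of Lemma~\ref{lem:sector-iqc} is static, $x_k = \bz_k$ and $\mathrm{cond}(P)=1$), so it suffices to produce multipliers $\lambda_1,\lambda_2 \ge 0$ making the left-hand side of~\eqref{eq:sdp-ppm} negative semidefinite at this $\rho$.

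First I would guess that the Lipschitz constraint is inactive here — plausible since backward Euler is unconditionally stable and one expects a rate with no $\kappa$-dependence — and set $\lambda_1 = 0$. This leaves $\begin{bmatrix}1-\rho^2-2m\lambda_2 & -\eta+(2\eta m+1)\lambda_2 \\ -\eta+(2\eta m+1)\lambda_2 & \eta^2-(2\eta^2 m+2\eta)\lambda_2\end{bmatrix}\preceq 0$. Choosing $\lambda_2 = \tfrac{\eta}{1+2\eta m}$ annihilates the off-diagonal entry; a short computation then shows the $(2,2)$ entry equals $-\tfrac{\eta^2}{1+2\eta m} < 0$ and the $(1,1)$ entry equals $\tfrac{1}{1+2\eta m}-\rho^2$, which vanishes precisely at the claimed rate. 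Hence the matrix in~\eqref{eq:sdp-ppm} is diagonal with nonpositive entries, so it is negative semidefinite, and Theorem~\ref{thm:main-thm} gives $\|\bz_{k+1}-\bz^*\|_2^2 \le \rho^2\|\bz_k-\bz^*\|_2^2$; iterating and using $\mathrm{cond}(P)=1$ yields the statement.

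As an independent sanity check — and essentially the primal counterpart of this SDP certificate — I would reverify the bound directly: write $\bz_k - \bz^* = (\bz_{k+1}-\bz^*) + \eta\bigl(F(\bz_{k+1})-F(\bz^*)\bigr)$ using $F(\bz^*)=0$, expand the square, drop the nonnegative term $\eta^2\|F(\bz_{k+1})-F(\bz^*)\|_2^2$, and lower-bound the cross term by $2\eta m\|\bz_{k+1}-\bz^*\|_2^2$ via $m$-strong monotonicity, obtaining $\|\bz_k-\bz^*\|_2^2 \ge (1+2\eta m)\|\bz_{k+1}-\bz^*\|_2^2$.

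There is no serious obstacle. The only points needing care are recognizing that $\lambda_1 = 0$ already suffices (so the rate is governed entirely by strong monotonicity and is uniform in $\kappa$ and in $\eta$), and matching $\rho^2$ exactly to $\tfrac{1}{1+2\eta m}$ rather than settling for a looser feasible value; the algebra handling the nonzero feedthrough $D = -\eta\iden_d$ in passing from~\eqref{eq:sdp} to~\eqref{eq:sdp-ppm} is routine.
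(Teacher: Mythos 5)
Your proof is correct. It certifies the same SDP as the paper's proof, and arrives at the same feasibility point (both proofs end up with $\lambda_1 = 0$, $\lambda_2 = \tfrac{\eta}{1+2\eta m}$), but the route through the algebra is different in a way worth noting. The paper uses Schur complements to parameterize the feasible set, derives a lower bound $\rho^2 \ge \tfrac{\lambda_1\Delta + \lambda_2^2}{\lambda_1 + 2\lambda_2\eta - \eta^2\Delta}$, and minimizes this over $\lambda_1,\lambda_2$; this tells you the rate cannot be improved within this certificate family, but leaves implicit the verification that equality is actually attained by a feasible point. You instead guess $\lambda_1 = 0$, tune $\lambda_2$ to zero out the off-diagonal entry, and read off that the resulting diagonal matrix is negative semidefinite precisely at $\rho^2 = \tfrac{1}{1+2\eta m}$ --- which is a direct feasibility certificate and hence a cleaner logical path to the rate claim (though it does not, by itself, show that $\tfrac{1}{1+2\eta m}$ is the \emph{best} rate obtainable from the sector IQCs, which the paper's Schur-complement computation does establish). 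Your closing ``sanity check'' --- expanding $\|\bz_k - \bz^*\|_2^2$ and applying only strong monotonicity --- is not in the paper at all; it is the elementary one-line Lyapunov argument that the SDP certificate encodes, and it has the pedagogical merit of making clear why the Lipschitz parameter plays no role in the bound.
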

As opposed to the rate bound of GD, the convergence rate $\rho^2 = \tfrac{1}{1 + 2\eta m}$ does not depend on the Lipschitz constant $L$ and is strictly smaller than $1$ for all positive stepsize $\eta$. Moreover, our bound is better than the one in~\citet[Theorem 2]{mokhtari2020unified} with rate $\rho^2 = \tfrac{1}{1 + \eta m}$. It is important to know that PPM can converge arbitrarily faster with large $\eta$, but the computation of $F(\bz_{k+1})$ would become  expensive. 

\subsection{Accelerating Smooth Games With Optimism}
Optimistic gradient method (OG) was shown to be an approximation to PPM~\citep{mokhtari2020unified} and it approximates $F(\bz_{k+1})$ with a lookahead step. From this standpoint, one may expect OG to inherit the merits of PPM and potentially improve upon plain gradient method.
In this section, we analyze OG with our IQC framework and show that indeed it converges faster than GD, as also shown in~\citet{gidel2018variational}. In particular, we study the recursion of \eqref{eq:ogda-recursion}. In this case, OG is also an approximation to Extra-gradient (EG) \citep{korpelevich1976extragradient} method by using past gradient~\citep{gidel2018variational, hsieh2019convergence}:
\begin{restatable}{prop}{ogaseg}\label{prop:ogda}
    OG is an approximation to EG but using the past gradient:
    \begin{equation}\label{eq:ogaseg}
    \begin{aligned}
        \bz_{k+1/2} &= \bz_k - \eta F(\bz_{k-1/2}), \\
        \bz_{k+1} &= \bz_k - \eta F(\bz_{k+1/2}).
    \end{aligned}
    \end{equation}
\end{restatable}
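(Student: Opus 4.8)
The statement is a purely algebraic identity: we must check that eliminating the auxiliary half-step sequence from the two-line scheme \eqref{eq:ogaseg} reproduces the OG recursion \eqref{eq:ogda-recursion}. The plan is to write \eqref{eq:ogaseg} at two consecutive indices, difference the extrapolation line, and then substitute the update line.

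Concretely, the extrapolation line of \eqref{eq:ogaseg} at index $k+1$ reads $\bz_{k+3/2} = \bz_{k+1} - \eta F(\bz_{k+1/2})$; subtracting the same line at index $k$ gives
\[
\bz_{k+3/2} - \bz_{k+1/2} = (\bz_{k+1} - \bz_k) - \eta F(\bz_{k+1/2}) + \eta F(\bz_{k-1/2}).
\]
Now I would substitute the update line $\bz_{k+1} - \bz_k = -\eta F(\bz_{k+1/2})$ into the right-hand side to obtain
\[
\bz_{k+3/2} = \bz_{k+1/2} - 2\eta F(\bz_{k+1/2}) + \eta F(\bz_{k-1/2}),
\]
which is exactly \eqref{eq:ogda-recursion} after the relabelling $\bar{\bz}_k := \bz_{k-1/2}$. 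I would also record the companion identity obtained by eliminating the gradient instead of the iterate: using the update line at index $k-1$ gives $\eta F(\bz_{k-1/2}) = \bz_{k-1} - \bz_k$, hence $\bz_{k+1/2} = 2\bz_k - \bz_{k-1}$ and $\bz_{k+1} = \bz_k - \eta F(2\bz_k - \bz_{k-1})$, i.e. the $\alpha = 1$, $\beta = 0$ instance of \eqref{eq:second-diff}. So both the half-step iterates $\{\bz_{k+1/2}\}$ and the full-step iterates $\{\bz_k\}$ generated by \eqref{eq:ogaseg} are OG trajectories. To finish, I would match initial conditions: the free data of \eqref{eq:ogaseg} is $(\bz_0, \bz_{-1/2})$, which determines $\bz_{1/2} = \bz_0 - \eta F(\bz_{-1/2})$ and hence the OG initialization $(\bar{\bz}_1, \bar{\bz}_0) = (\bz_{1/2}, \bz_{-1/2})$, so the two trajectories coincide for every $k$, not merely asymptotically.

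There is essentially no obstacle in this argument: it is a single difference of the recursion followed by one substitution. The only point requiring care is the bookkeeping of the half-integer indices and the index shift in the relabelling, so that the equivalence between \eqref{eq:ogaseg} and \eqref{eq:ogda-recursion} is stated as an exact correspondence of trajectories — with the initial conditions spelled out — rather than holding only up to a transient term.
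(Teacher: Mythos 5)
Your proof is correct and rests on exactly the same key observation as the paper's: the update line of~\eqref{eq:ogaseg} at index $k-1$ gives $\eta F(\bz_{k-1/2}) = \bz_{k-1} - \bz_k$, which lets you eliminate the half-steps. Where you diverge from the paper is in completeness, not direction. The paper's proof substitutes that identity only into the integer-indexed trajectory and obtains the lookahead form $\bz_{k+1} = \bz_k - \eta F(2\bz_k - \bz_{k-1})$, i.e.\ the $(\alpha,\beta)=(1,0)$ instance of~\eqref{eq:second-diff}, then declares the proposition proved. You do that too, but you also difference the extrapolation line of~\eqref{eq:ogaseg} and substitute once more to show that the half-integer iterates satisfy the past-gradient form~\eqref{eq:ogda-recursion} literally, and you spell out how the free initial data of~\eqref{eq:ogaseg} matches an OG initialization so the correspondence of trajectories is exact. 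That bookkeeping is a genuine addition: the paper's statement refers to OG as written in~\eqref{eq:ogda-recursion}, but its proof produces the lookahead recursion, which coincides with~\eqref{eq:ogda-recursion} only after the half-step change of variables you make explicit. So your version establishes the equivalence the proposition actually asserts rather than a companion identity taken as synonymous with it.
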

\noindent Rewriting OG as a variant of EG, one can derive the following convergence result.
\begin{restatable}[{\citet{gidel2018variational}}]{thm}{ograte}\label{thm:OGDA}
    Under Assumption~\ref{ass:monotonicity} and~\ref{ass:lipschitz}, if we take $\eta = 1 / (4L)$, then
    \begin{equation}
        \|\bz_k - \bz^* \|_2^2 \leq \left(1 - \frac{1}{4 \kappa}\right)^k \|\bz_0 - \bz^* \|_2^2, \quad \text{for all } k \geq 0.
    \end{equation}
\end{restatable}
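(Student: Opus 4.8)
The plan is to prove Theorem~\ref{thm:OGDA} along the lines of \citet{gidel2018variational}, exploiting the ``extra-gradient with past gradient'' rewriting of OG in Proposition~\ref{prop:ogda} together with a hand-built quadratic Lyapunov function. (Staying entirely inside our IQC machinery is also possible and would serve as a cross-check: instantiate Theorem~\ref{thm:main-thm} with the OG state matrices --- $\alpha=1$, $\beta=0$, $D=\zero_d$ in~\eqref{eq:second-diff} --- together with the sector and off-by-one pointwise IQCs of Lemmas~\ref{lem:sector-iqc}--\ref{lem:off-by-one}, reduce to a small dimension-free SDP in the style of~\eqref{eq:sdp-gda}--\eqref{eq:sdp-ppm}, and exhibit a feasible $(P,\lambda_1,\lambda_2,\dots)$ with $\rho^2=1-1/(4\kappa)$ at $\eta=1/(4L)$.)

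For the Lyapunov route, write $g_k \coloneqq F(\bz_{k+1/2})$, so the half-steps of~\eqref{eq:ogaseg} become $\bz_{k+1/2}=\bz_k-\eta g_{k-1}$ and $\bz_{k+1}=\bz_k-\eta g_k$; in particular $\bz_k-\bz_{k+1/2}=\eta g_{k-1}$ and $\bz_{k+1/2}-\bz_{k-1/2}=\eta(g_{k-2}-2g_{k-1})$. I would track a potential equal to $\|\bz_k-\bz^*\|_2^2$ plus a small ``memory'' term (a quadratic in the last one or two gradients, with coefficient of order $\eta^2$), with the convention $\bz_{-1/2}\coloneqq\bz_0$, and show it contracts by $\rho^2=1-1/(4\kappa)$ when $\eta=1/(4L)$. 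The computation runs as follows: expand $\|\bz_{k+1}-\bz^*\|_2^2=\|\bz_k-\bz^*\|_2^2-2\eta\langle g_k,\bz_k-\bz^*\rangle+\eta^2\|g_k\|_2^2$; split $\langle g_k,\bz_k-\bz^*\rangle=\langle g_k,\bz_k-\bz_{k+1/2}\rangle+\langle g_k,\bz_{k+1/2}-\bz^*\rangle$ and lower-bound the second term by $m\|\bz_{k+1/2}-\bz^*\|_2^2$ using Assumption~\ref{ass:monotonicity} and $F(\bz^*)=\zero$; substitute $\bz_k-\bz_{k+1/2}=\eta g_{k-1}$ and use the identity $\eta^2\|g_k\|_2^2-2\eta^2\langle g_k,g_{k-1}\rangle=\eta^2\|g_k-g_{k-1}\|_2^2-\eta^2\|g_{k-1}\|_2^2$, so the only surviving appearance of the current gradient is inside $\eta^2\|g_k-g_{k-1}\|_2^2\le\eta^2L^2\|\bz_{k+1/2}-\bz_{k-1/2}\|_2^2$ (Assumption~\ref{ass:lipschitz}), which is a quadratic in $g_{k-1},g_{k-2}$ with coefficient of order $\eta^4$; turn the strong-monotonicity gain $-2m\eta\|\bz_{k+1/2}-\bz^*\|_2^2$ into a decrease of $\|\bz_k-\bz^*\|_2^2$ via $\|\bz_{k+1/2}-\bz^*\|_2^2=\|\bz_k-\bz^*\|_2^2-2\eta\langle g_{k-1},\bz_k-\bz^*\rangle+\eta^2\|g_{k-1}\|_2^2$ and Young's inequality, keeping a sliver of it to absorb the memory term of the next potential (via $\|g_k\|_2^2\le L^2\|\bz_{k+1/2}-\bz^*\|_2^2$); finally tune the memory constants and set $\eta=1/(4L)$ so that the coefficient of $\|\bz_k-\bz^*\|_2^2$ is at most $1-1/(4\kappa)$ and every residual gradient-norm term is dominated by the memory terms and the negative $\|g_{k-1}\|_2^2$ produced along the way. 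A separate estimate for the first step (using $\|F(\bz_0)\|_2^2\le L^2\|\bz_0-\bz^*\|_2^2$ to control the initial memory term) lets one telescope and drop the nonnegative memory part, giving $\|\bz_k-\bz^*\|_2^2\le\rho^{2k}\|\bz_0-\bz^*\|_2^2$.

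The main obstacle is the ``past-gradient error'' $\eta^2\|g_k-g_{k-1}\|_2^2$, which is precisely what separates OG from true extra-gradient: for EG the analogous term equals $(\eta^2L^2-1)\|F(\bz_k)\|_2^2\le0$ as soon as $\eta\le1/L$, so the bare potential $\|\bz_k-\bz^*\|_2^2$ already contracts, whereas for OG this error couples three consecutive iterates and must be split between the strong-monotonicity gain and the memory terms of the potential. Getting all of these quadratic forms to close simultaneously is what both forces $\eta=\Theta(1/L)$ and yields the $\Theta(1/\kappa)$ rate; nailing the exact constants $\eta=1/(4L)$, $\rho^2=1-1/(4\kappa)$ and the admissible memory coefficients is the only genuinely delicate part of the argument. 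In the IQC formulation the same phenomenon shows up as the fact that the sector IQCs alone are too weak --- they permit a time-varying field and so can only certify a GD-type $1-\Theta(1/\kappa^2)$ rate --- so the off-by-one pointwise IQCs of Lemma~\ref{lem:off-by-one}, which control $\|\bz_{k+1/2}-\bz_{k-1/2}\|_2$, have to be adjoined before the SDP~\eqref{eq:sdp} can verify the faster rate.
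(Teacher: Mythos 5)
Your proposal follows essentially the same route as the paper's proof (and as \citet{gidel2018variational}): rewrite OG as extra-gradient with a past gradient via Proposition~\ref{prop:ogda}, expand $\|\bz_{k+1}-\bz^*\|_2^2$, split the inner product at $\bz_{k+1/2}$ so that strong monotonicity applies there, absorb the past-gradient error $\eta^2\|g_k-g_{k-1}\|_2^2$ via the Lipschitz bound on $\|\bz_{k+1/2}-\bz_{k-1/2}\|_2$, and build a Lyapunov function of the form $\|\bz_k-\bz^*\|_2^2+c\,\|\bz_{k-1/2}-\bz_{k-3/2}\|_2^2$ (the paper takes $c=1/16=\eta^2L^2$) that contracts by $1-\eta m=1-1/(4\kappa)$ at $\eta=1/(4L)$. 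The decomposition you use is a mild rearrangement of the same identities, so it leads to the paper's intermediate inequality~\eqref{eq:thm4-eq3} and the same memory term and step-size choice; the only work left is the elementary verification that the two quadratic coefficient constraints close, exactly as in the paper.
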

Theorem~\ref{thm:OGDA} suggests that OG has an iteration complexity of $\bigo(\kappa)$, which indeed accelerates GD substantially.
Notably, this also implies that OG is \emph{near optimal} in the sense that it matches the lower bound~\eqref{eq:lower-bound} up to a constant~\citep[Corollary 1]{azizian2020accelerating}. However, the proof of Theorem~\ref{thm:OGDA} is quite involved and relies on a cleverly designed Lyapunov function. Here, we improve the rate bound using IQC machinery. 

\begin{figure}[t]
\vspace{-0.3cm}
	\centering
    \begin{subfigure}[t]{0.45\textwidth}
        \centering
        \includegraphics[width=0.98\textwidth]{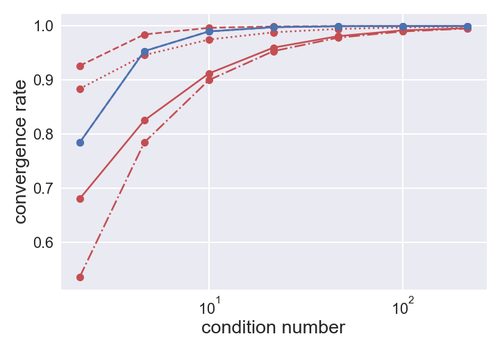}
        \vspace{-0.2cm}
        \caption{Convergence Rate}
    \end{subfigure}
    \begin{subfigure}[t]{0.45\textwidth}
        \centering
        \includegraphics[width=0.98\textwidth]{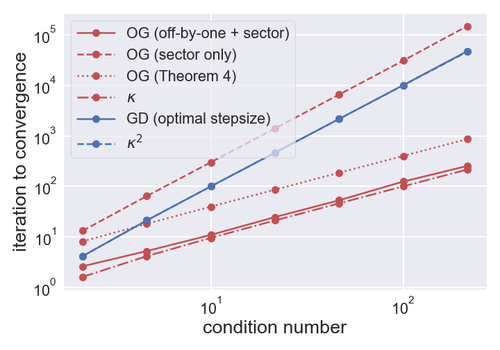}
        \vspace{-0.2cm}
        \caption{Iteration Complexity}
    \end{subfigure}
    \vspace{-0.25cm}
	\caption{Upper bounds of convergence rate and iteration complexity for GD and OG. We test both the sector IQCs and the combination of the sector and off-by-one pointwise IQCs for OG. We tuned the step sizes of OG and GD using grid search. Compared to the rate in Theorem~\ref{thm:OGDA}, we are able to improve the bound by roughly a factor of $4$. Two blue lines are virtually identical.}
	\label{fig:OGDA}
\end{figure}
We compute the rate bounds using Theorem~\ref{thm:main-thm} with either the sector IQCs in Lemma~\ref{lem:sector-iqc} or a combination of the sector and off-by-one pointwise IQCs in Lemma~\ref{lem:off-by-one}. In contrast to GD, the SDP problem induced by OG is not analytically solvable anymore, thus we use bisection search to find the optimal rate $\rho$. For fixed $\rho$ and $\kappa$, the SDP \eqref{eq:sdp} become an LMI and can be efficiently solved using interior-point methods~\citep{boyd2004convex}. For all simulations in the paper, we use CVXPY~\citep{diamond2016cvxpy} package with Mosek solver. 

With the sector IQCs alone, we observe that OG may diverge if we take $\eta = 1/4L$ in the sense that the best rate achieved is $\rho \geq 1$ even for very small condition numbers. 
To understand why, recall from Lemma~\ref{lem:sector-iqc} that the sector IQCs allow for $F_k$ to be different at each iteration. Unlike GD, OG is not robust to having a changing $F_k$. We further conjecture that the divergence of OG is caused by the aggressive step size choice (compared to $m/L^2$ in GD), we therefore tune the step size for OG. Figure~\ref{fig:OGDA} shows the certified convergence rates of OG. We find that the optimal step size for OG in that setting is much smaller than $1/4L$. Moreover, its iteration complexity scales quadratically with condition number $\kappa$ and is perhaps worse than GD by a constant. This matches the prediction of Theorem~\ref{thm:opt-gda} that GD is provably optimal for time-varying systems.

On the other hand, if we add off-by-one pointwise IQCs to the LMI, the bound for OG does improve upon that of GD, especially when the condition number is large (see red solid lines in Figure~\ref{fig:OGDA}). This suggests that enforcing the consistency of two consecutive vector field quries is important for the acceleration of OG.
In this case, the complexity of OG scales linearly with condition number, matching existing bounds.
Moreover, the convergence rate improves upon that of \citet{gidel2018variational} (see Theorem~\ref{thm:OGDA}) by roughly a constant factor of $4$, highlighting the usefulness of IQCs for certifying sharp bounds.

Lastly, we may ask the question how OG performs over the family of algorithms with one step of memory~\eqref{eq:second-diff}. This is easy to carry out numerically since one can search for the minimal $\rho^2$ for different combinations of $\alpha, \beta$. To be precise, we conduct grid search over $\alpha, \beta, \eta$ and interestingly it turns out that the optimal parameters are $\alpha = 1$ and $\beta = 0$, corresponding exactly to OG (see Figure~\ref{fig:heatmap}). In other words, OG appears likely to be optimal within the family of algorithms with one step of memory. 
\begin{figure}[t]
\vspace{-0.3cm}
	\centering
    \includegraphics[width=0.9\textwidth]{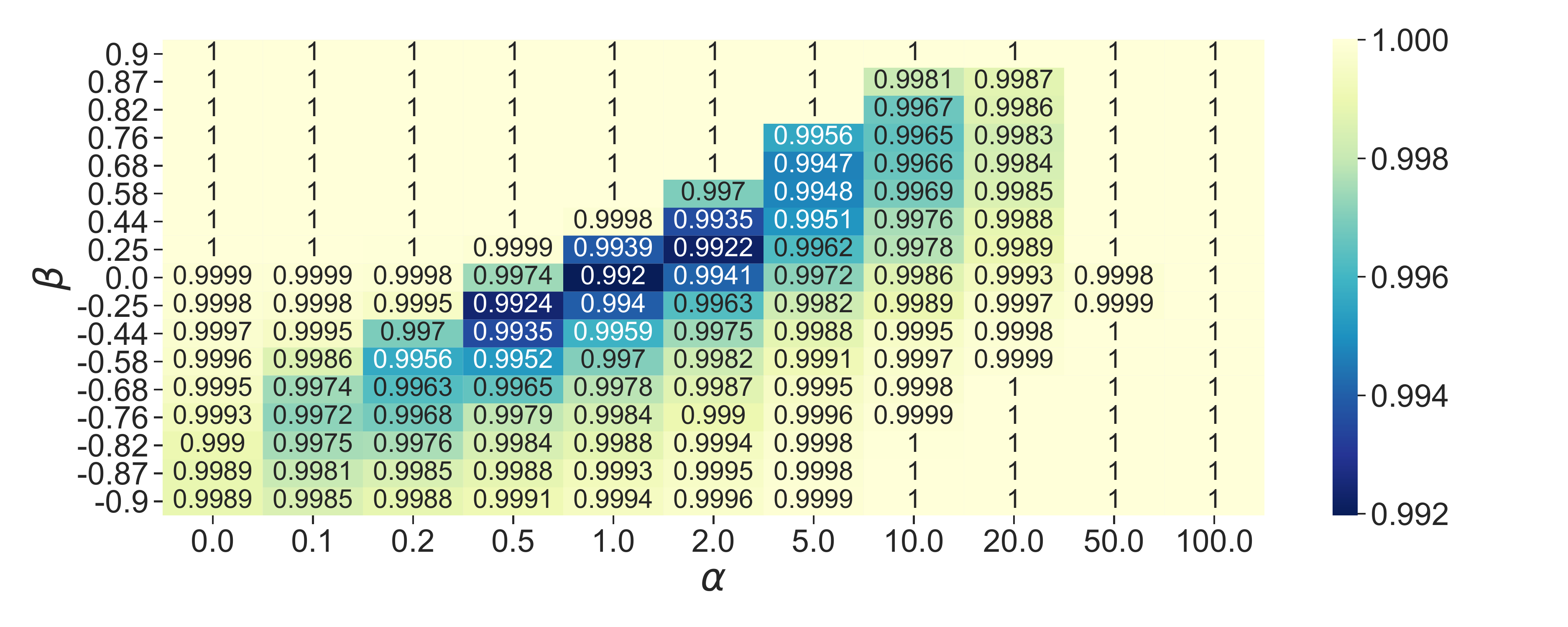}
    \vspace{-0.5cm}
	\caption{Grid search over algorithms with one step memory~\eqref{eq:second-diff}, where $\alpha$ and $\beta$ are two parameters for this algorithm family. Here, we compute the convergence rates using the condition number $\kappa = 100$. It turns out that optimistic gradient method (OG) with $\alpha = 1$ and $\beta =0$ has the best convergence rate over all combinations of $\alpha, \beta$.}
	\label{fig:heatmap}
\end{figure}

\subsection{Global Convergence of Negative Momentum}\label{sec:nm}
\begin{figure}[t]
\vspace{-0.3cm}
	\centering
    \begin{subfigure}[t]{0.45\textwidth}
        \centering
        \includegraphics[width=0.98\textwidth]{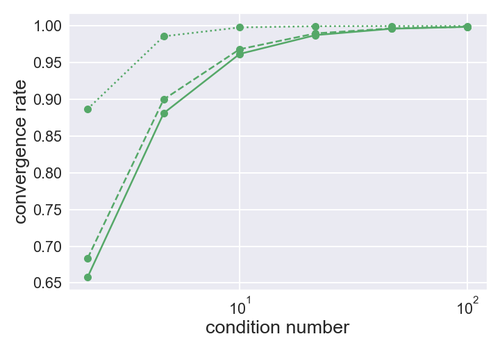}
        \vspace{-0.2cm}
        \caption{Convergence Rate}
    \end{subfigure}
    \begin{subfigure}[t]{0.45\textwidth}
        \centering
        \includegraphics[width=0.98\textwidth]{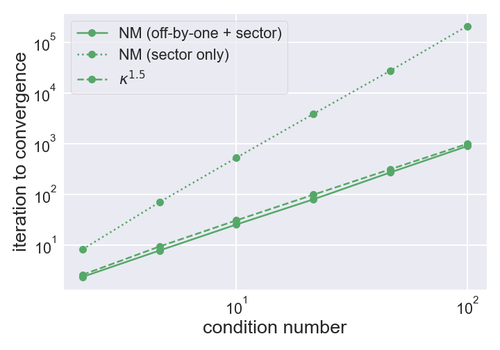}
        \vspace{-0.2cm}
        \caption{Iteration Complexity}
    \end{subfigure}
    \begin{subfigure}[t]{0.45\textwidth}
        \centering
        \includegraphics[width=0.98\textwidth]{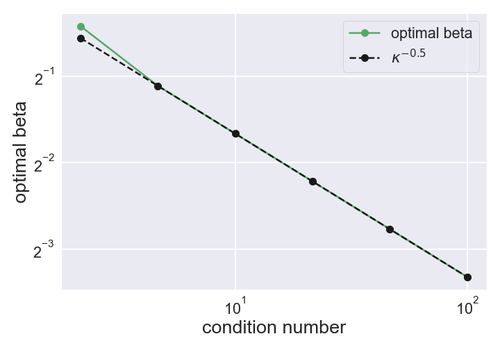}
        \vspace{-0.2cm}
        \caption{Optimal momentum parameter}\label{fig:nm-beta}
    \end{subfigure}
    \begin{subfigure}[t]{0.45\textwidth}
        \centering
        \includegraphics[width=0.98\textwidth]{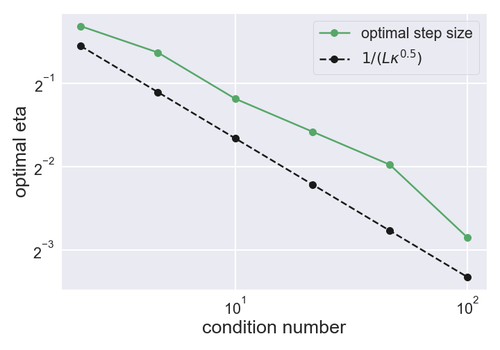}
        \vspace{-0.2cm}
        \caption{Optimal step size}\label{fig:nm-ss}
    \end{subfigure}
    \vspace{-0.25cm}
	\caption{\textbf{Top}: Curves of convergence rate and iteration complexity for negative momentum with tuned $\beta$ and $\eta$. The dashed line is the known lower bound of NM $\rho^2 = 1 - \kappa^{-1.5}$ up to a unspecified constant. The dotted line is obtained by only using the sector IQCs with $\beta = \kappa^{-0.5} - 1$. \textbf{Bottom}: Optimal momentum value and step size for negative momentum as functions of condition number $\kappa$. For momentum parameter, we plot $\beta + 1$ for clarity.}%
	\label{fig:NM}
\end{figure}
In the preceding sections, we recovered or improved previously known convergence rates of GD, PPM and OG either analytically or numerically. One may further ask whether we can provide the convergence rates of some algorithms which were unknown before based on our IQC analysis. We answer this question in the affirmative for deriving a novel convergence bound of negative momentum, which essentially refers to Polyak momentum with a negative damping parameter.

Negative momentum was first studied in~\citet{gidel2019negative} on simple bilinear games. Later, it was shown by~\citet{zhang2020suboptimality} that negative momentum converges \emph{locally} with an iteration complexity of $\bigo(\kappa^{1.5})$ for smooth and strongly-monotone variational inequality problems. More importantly, \citet{zhang2020suboptimality} showed that the bound is tight asymptotically by proving a lower bound of $\Omega(\kappa^{1.5})$. Yet, it is unclear whether negative momentum can converge globally with the same rate. In general, it is highly non-trivial to prove an explicit global convergence rate for Polyak momentum. For example, \citet{ghadimi2015global} can only show that Polyak momentum converges globally with properly chosen parameters but no explicit rate was provided.

Using a combination of the sector and off-by-one pointwise IQCs, we evaluate the rates of negative momentum numerically by doing bisection search on $\rho$ and grid search on the parameter $\beta$ and step size $\eta$. We report the results in Figure~\ref{fig:NM}. Unexpectedly, the complexity curve of negative momentum has a slope of $1.5$, suggesting it attains the same complexity of $\bigo(\kappa^{1.5})$ globally. Also, the rate matches the known lower bound tightly (see the dashed line in Figure~\ref{fig:NM}). This is surprising, in that Polyak momentum fails to achieve the same accelerated convergence rate (i.e., its local convergence rate) globally in the convex optimization setting~\citep{lessard2016analysis}. Furthermore, we find that the optimal step size and momentum parameter follow simple functions of condition number $\kappa$. According to our simulations, the optimal step size is roughly $\frac{1}{L \sqrt{\kappa}}$ while the optimal momentum value is $\kappa^{-0.5} - 1$, as shown in Figure~\ref{fig:NM}. To the best of our knowledge, we provide the \emph{first} global convergence rate guarantee for negative momentum using our IQC framework, something which is otherwise difficult to prove.

\section{IQCs for Stochastic Games}\label{sec:sto-games}
We have been discussing handling the nonlinear element $F$ of the variational inequality with IQCs. Here, we further extend it to model the uncertainty in computing the vector field $F$. Of particular interest to us is the situation where $F(\bz) = \expect_\epsilon F(\bz; \epsilon)$ is the expectation with respect to random variable $\epsilon$ of the random operator $F(\bz; \epsilon)$. Inspired by recent works on the interpolation regime, we consider a \emph{strong growth condition}~\citep{vaswani2019fast}:
\begin{equation}\label{eq:strong-growth}
    \expect_\epsilon \|F(\bz; \epsilon)\|_2^2 \leq \delta \|F(\bz)\|_2^2.
\end{equation}
Equivalently, in the finite-sum setting:
\begin{equation}
    \expect_i \|F_i(\bz)\|_2^2 \leq \delta \|F(\bz)\|_2^2.
\end{equation}
For this inequality to hold, if $F(\bz) = 0$, then $F_i(\bz) = 0$ for all $i$. This noise model is an instance of multiplicative noise in the sense that the perturbation noise is a function of the state $\bz$. Note that this noise model has been shown to hold for overparameterized models~\citep{ma2018power, liu2018mass} and underlies exponential convergence of stochastic gradient based algorithms (see~\citet{strohmer2009randomized, moulines2011non, ma2018power}). It is also possible to include additive noise by using the \emph{bias-variance} decomposition~\citep{bach2013non, fallah2020optimal}. For numerical tractability, we here focus on the finite-sum setting with $n$ examples. Later, we will show in Theorem~\ref{thm:dim-free} that the convergence rate is independent of $n$ for all $n \geq 2$. In that case, we can model optimization algorithms as stochastic jump systems~\citep{costa2006discrete}:
\begin{equation}\label{eq:jump-system}
\begin{aligned}
    \xi_{k+1} &= A \xi_{k} + B_{i_k} u_k \\
    y_k &= C \xi_k + D u_k \\
    u_k &= [F_1(y_k)^\top, ..., F_n(y_k)^\top]^\top.
\end{aligned}
\end{equation}
For the gradient method, the matrix $B_{i_k}$ is simply $(-\eta \mathbf{e}_{i_k}^\top) \otimes \iden_d $ where $\mathbf{e}_{i_k}$ is a one-hot vector with $i_k$-entry being $1$.
Similar to the deterministic system, we can impose quadratic constraints by designing $s = \Psi(y, u)$ and matrix $M$. For example, in the case of $n = 2$, we can enforce $L$-Lipschitzness of $F$ together with the strong growth condition as follows (where we ignore the dimension since we can factorize all the matrices as Kronecker products):
\begin{equation*}
    \Psi
    = 
    \left[ 
    \begin{array}{c|c|cc}
        0 & 0 & 0 & 0 \\\hline
        0 & 1 & 0 & 0 \\
        0 & 0 & 0.5 & 0.5  \\ 
        0 & 0 & 1 & 0 \\
        0 & 0 & 0 & 1 \\
    \end{array}
    \right], \quad
    M_1 = \begin{bmatrix}
    L^2 & 0 & 0 & 0\\
    0 & -1 & 0 & 0 \\
    0 & 0 & 0 & 0 \\
    0 & 0 & 0 & 0 \\
    \end{bmatrix},
    \quad
    M_2 = \begin{bmatrix}
    0 & 0 & 0 & 0 \\
    0 & 0 & 0 & 0 \\
    0 & 0 & \delta/2 - 1 & \delta/2  \\
    0 & 0 & \delta/2 & \delta/2 - 1 \\
    \end{bmatrix}.
\end{equation*}
Again, combining the dynamics~\eqref{eq:jump-system} with $\Psi$, we have the following compact form:
\begin{equation}\label{eq:jump-compact}
\begin{aligned}
    x_{k+1} &= \hat{A} x_k + \hat{B}_{i_k} u_k \\
    s_k &= \hat{C}x_k + \hat{D} u_k
\end{aligned},
\quad \text{where}\; x_k = \begin{bmatrix} \xi_k \\ \zeta_k \end{bmatrix}.
\end{equation}
Assuming $i_k$ is drawn uniformly in an i.i.d manner, we have
\begin{restatable}[{\citet[Theorem 1]{hu2017unified}}]{thm}{stojump}\label{thm:jump-system}
    Consider the stochastic jump system~\eqref{eq:jump-system}. Suppose $F$ satisfies the pointwise IQC specified by $(\Psi, M)$, and consider the following LMI:
    \begin{equation}\label{eq:sdp-jump}
        \begin{bmatrix}
        \hat{A}^\top P \hat{A} - \rho^2 P & \frac{1}{n}\sum_{i=1}^n\hat{A}^\top P \hat{B}_i \\
        \frac{1}{n}\sum_{i=1}^n\hat{B}_i^\top P \hat{A} & \frac{1}{n}\sum_{i=1}^n\hat{B}_i^\top P \hat{B}_i
        \end{bmatrix} +
        \lambda \begin{bmatrix} \hat{C} & \hat{D} \end{bmatrix}^\top M \begin{bmatrix} \hat{C} & \hat{D} \end{bmatrix} \preceq 0.
    \end{equation}
    If this LMI is feasible with $P \succ 0$ and $\lambda \geq 0$, then the following inequality holds,
    \begin{equation}
        \expect[(x_{k+1} - x^*)^\top (P \otimes \iden_d) (x_{k+1} - x^*)]  \leq \rho^2 (x_k - x^*)^\top (P \otimes \iden_d) (x_k - x^*).
    \end{equation}
    Consequently, we have
    $\expect \|\xi_k - \xi^*\|_2^2 \leq \mathrm{cond}(P) \rho^{2k}  \|\xi_0 - \xi^*  \|_2^2$ for any $\xi_0$ and $k \geq 1$.
\end{restatable}
Similar to Theorem~\ref{thm:main-thm}, when $\rho^2$ is given, the condition \eqref{eq:sdp-jump} is linear with respect to $P$ and $\lambda$. Therefore, it is an LMI whose feasible set is convex and can be effectively solved using the state-of-the-art convex optimization techniques, such as interior-point method~\citep{boyd2004convex}. 
Besides, it is important to know that the size of the LMI condition \eqref{eq:sdp-jump} scales proportionally with $n$. Nevertheless, one can show that the optimal $\rho^2$ is independent of $n$ under the strong growth condition for algorithms we consider.
\begin{restatable}{thm}{dimensionfree}\label{thm:dim-free}
    For algorithms with one step of memory, the feasible set of the LMI~\eqref{eq:sdp-jump} is independent of $n$ when $n \geq 2$, hence the optimal solution $\rho^2$ of the SDP in Theorem~\ref{thm:jump-system} under the strong growth condition is independent of $n$ when $n \geq 2$.
\end{restatable}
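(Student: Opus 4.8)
The plan is to eliminate the (dummy) auxiliary $\zeta$-state, rewrite the LMI~\eqref{eq:sdp-jump} as a single quadratic-form inequality, and exhibit a decomposition of that quadratic form into an $n$-independent part plus a ``variance'' part whose sign, together with the $n$-independent part, is all that governs feasibility.

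\textbf{Step 1 (reduce to a quadratic form and decompose it).} I would first observe that feasibility of~\eqref{eq:sdp-jump} is equivalent to $\mathcal{Q}_n(x,u^{(1)},\dots,u^{(n)})\le 0$ for all $x$ and all blocks $u^{(i)}\in\real^d$ of $u=(u^{(1)\top},\dots,u^{(n)\top})^\top$, where the $i$-th block plays the role of $F_i(y_k)$ and $\mathcal{Q}_n$ is the quadratic form of the matrix in~\eqref{eq:sdp-jump}. Two structural facts drive the argument. (i) Each $\hat B_i$ reads only the $i$-th block of $u$, and through the \emph{same} linear map that produces the deterministic $\hat B$ (for GD, $\hat B_i u=[-\eta\,u^{(i)\top},\zero^\top]^\top$); hence $\tfrac1n\sum_i \hat A^\top P\hat B_i$ acts as $\bar u\mapsto \hat A^\top P\hat B_{\mathrm{avg}}\,\bar u$ with $\bar u\triangleq\tfrac1n\sum_i u^{(i)}$, and $\tfrac1n\sum_i \hat B_i^\top P\hat B_i$ acts as $u\mapsto\tfrac1n\sum_i (u^{(i)})^\top G_0\,u^{(i)}$ for a symmetric matrix $G_0$ that does not depend on $n$ (for GD, $G_0=\eta^2 P_{11}\iden_d$). (ii) Every IQC we use --- Lipschitz/sector, strong monotonicity, off-by-one, strong growth --- is invariant under permuting the $n$ examples, and its quadratic form splits as a function of $(x,\bar u)$ plus the average over $i$ of a quadratic in $u^{(i)}$; e.g.\ the strong growth IQC contributes $\lambda_2\bigl(\delta\|\bar u\|_2^2-\tfrac1n\sum_i\|u^{(i)}\|_2^2\bigr)$, while the Lipschitz and monotonicity IQCs involve only $y_k=Cx$ and $\bar u=F(y_k)$. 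Combining (i) and (ii) yields $\mathcal{Q}_n=h(x,\bar u)+\tfrac1n\sum_{i=1}^n (u^{(i)})^\top G\,u^{(i)}$, where the quadratic $h$ and the symmetric matrix $G$ (for GD, $G=\eta^2 P_{11}\iden_d-\lambda_2\iden_d$) depend only on $(P,\lambda,\rho)$, not on $n$.

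\textbf{Step 2 (case analysis and conclusion).} Next I would substitute $u^{(i)}=\bar u+v^{(i)}$ with $\sum_i v^{(i)}=\zero$ and use symmetry of $G$ to get $\tfrac1n\sum_i (u^{(i)})^\top G u^{(i)}=\bar u^\top G\bar u+\tfrac1n\sum_i (v^{(i)})^\top G v^{(i)}$, so $\mathcal{Q}_n=H(x,\bar u)+\tfrac1n\sum_i (v^{(i)})^\top G v^{(i)}$ with $H(x,\bar u)\triangleq h(x,\bar u)+\bar u^\top G\bar u$ independent of $n$. If $G\preceq 0$ fails, choose $w$ with $w^\top G w>0$ and set $v^{(1)}=tw$, $v^{(2)}=-tw$, $v^{(j)}=\zero$ for $j\ge 3$ (possible since $n\ge 2$); the variance term then equals $\tfrac{2t^2}{n}\,w^\top G w\to+\infty$, so~\eqref{eq:sdp-jump} is infeasible, for every $n\ge 2$. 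If $G\preceq 0$, the variance term is $\le 0$ with equality at $v^{(i)}\equiv\zero$, so $\mathcal{Q}_n\le 0$ for all arguments iff $H(x,\bar u)\le 0$ for all $x,\bar u$. In both cases, for every $n\ge 2$ the LMI~\eqref{eq:sdp-jump} is feasible iff $G\preceq 0$ and $H(x,\bar u)\le 0$ for all $x,\bar u$ --- a condition that does not reference $n$ --- so the feasible set in $(P,\lambda,\rho)$, and therefore the optimal $\rho^2$, is the same for all $n\ge 2$.

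\textbf{Main obstacle.} I expect the difficulty to be bookkeeping rather than conceptual: checking that the $\Psi$-state can be taken trivial for the pointwise IQCs in play (or, if $\zeta$ is kept, that its update is also read from $u$ in the same permutation-symmetric, block-separable fashion), and verifying the split in (ii) for the off-by-one pointwise IQC, whose temporal coupling must be routed through the state $\xi_k=[\bz_k^\top,\bz_{k-1}^\top]^\top$ rather than across distinct batch indices. The hypothesis $n\ge 2$ enters exactly once, in constructing the unbounded perturbation above; for $n=1$ there is no noise and the statement collapses to the deterministic Theorem~\ref{thm:main-thm}.
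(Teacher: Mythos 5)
Your proposal is correct and is, in substance, the paper's own proof: the paper writes out the state matrices explicitly, takes a Schur complement, and reduces the $u$-block after rescaling $\lambda_5\mapsto n\lambda_5$ to a condition of the form $a\iden_n+b\tfrac{\bm{1}_{n\times n}}{n}\preceq 0$, whose spectral characterization $a\le 0$, $a+b\le 0$ is $n$-free. Your abstract split $\mathcal{Q}_n=H(x,\bar u)+\tfrac1n\sum_i (u^{(i)})^\top G\,u^{(i)}$ with the orthogonal substitution $u^{(i)}=\bar u+v^{(i)}$ is exactly that eigendecomposition stated structurally (and your two-sample perturbation $(tw,-tw,\zero,\dots)$ is the $a\le 0$ direction), while your per-sample normalization $\lambda(\delta\|\bar u\|_2^2-\tfrac1n\sum_i\|u^{(i)}\|_2^2)$ of the strong-growth IQC quietly performs the multiplier rescaling that the paper carries out explicitly.
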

Therefore, we could safely choose $n = 2$ in all our numerical simulations.

\subsection{The Robustness of Gradient Method}
We now analyze the dynamics of GD to determine if it is robust to noisy gradients. 
It is easy to show (without using Theorem~\ref{thm:jump-system}) that with properly scaled step size, GD maintains the iteration complexity of $\bigo(\kappa^2)$ which we derived for the deterministic case.
\begin{restatable}[GD with the strong growth condition]{thm}{gdsto}\label{thm:SGD}
    Under Assumptions~\ref{ass:monotonicity} and~\ref{ass:lipschitz}, if we further assume the vector field $F$ satisfies the strong growth condition~\eqref{eq:strong-growth} with parameter $\delta$ and take $\eta = 1 / (L \kappa \delta)$, then we have 
    \begin{equation}
        \expect \|\bz_k - \bz^* \|_2^2 \leq \left(1 - \frac{1}{\kappa^2 \delta} \right)^k \|\bz_0 - \bz^* \|_2^2.
    \end{equation}
\end{restatable}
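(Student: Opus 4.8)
The plan is to avoid the LMI machinery of Theorem~\ref{thm:jump-system} entirely and argue directly from the stochastic GD recursion $\bz_{k+1} = \bz_k - \eta F_{i_k}(\bz_k)$, where $i_k$ is sampled uniformly and i.i.d. The first step I would take is to observe that at the fixed point $F(\bz^*) = 0$, so the strong growth condition~\eqref{eq:strong-growth} evaluated at $\bz^*$ forces $F_i(\bz^*) = 0$ for every $i$. This is the one genuinely load-bearing observation: it allows every term below to be centered at $\bz^*$ so that both Assumptions can be applied against $\|\bz_k - \bz^*\|_2^2$.

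Next I would expand
\begin{equation*}
\|\bz_{k+1} - \bz^*\|_2^2 = \|\bz_k - \bz^*\|_2^2 - 2\eta\, F_{i_k}(\bz_k)^\top(\bz_k - \bz^*) + \eta^2 \|F_{i_k}(\bz_k)\|_2^2
\end{equation*}
and take the conditional expectation over $i_k$ given $\bz_k$. The cross term becomes $-2\eta\, F(\bz_k)^\top(\bz_k - \bz^*)$, which I bound above by $-2\eta m \|\bz_k - \bz^*\|_2^2$ using $m$-strong monotonicity (Assumption~\ref{ass:monotonicity}) together with $F(\bz^*) = 0$. The quadratic term gives $\eta^2 \expect_i \|F_i(\bz_k)\|_2^2 \le \eta^2 \delta \|F(\bz_k)\|_2^2 \le \eta^2 \delta L^2 \|\bz_k - \bz^*\|_2^2$ by strong growth followed by $L$-Lipschitzness (Assumption~\ref{ass:lipschitz}) and $F_i(\bz^*) = 0$. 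Combining,
\begin{equation*}
\expect\bigl[\|\bz_{k+1} - \bz^*\|_2^2 \mid \bz_k\bigr] \le \bigl(1 - 2\eta m + \eta^2 \delta L^2\bigr) \|\bz_k - \bz^*\|_2^2.
\end{equation*}

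Finally I would note that the contraction factor $1 - 2\eta m + \eta^2 \delta L^2$ is a quadratic in $\eta$ minimized at $\eta = m/(\delta L^2) = 1/(L\kappa\delta)$, which is exactly the prescribed step size, and at which the factor equals $1 - m^2/(\delta L^2) = 1 - 1/(\kappa^2\delta)$. Taking total expectations and iterating the one-step inequality $k$ times via the tower property then yields $\expect\|\bz_k - \bz^*\|_2^2 \le (1 - 1/(\kappa^2\delta))^k \|\bz_0 - \bz^*\|_2^2$. I do not expect a real obstacle here: the argument is a standard one-step descent estimate, and the only point requiring care is the opening reduction $F_i(\bz^*) = 0$, without which strong growth could not legitimately be applied at the iterates. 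A minor bookkeeping check is that $m^2/L^2 = 1/\kappa^2$ so the optimal-$\eta$ substitution produces precisely the stated rate.
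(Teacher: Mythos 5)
Your proof is correct and is essentially the paper's own argument: expand the one-step squared error, take expectation, bound the cross term by strong monotonicity (plus $F(\bz^*) = 0$), bound the quadratic term by strong growth followed by Lipschitzness, substitute $\eta = m/(\delta L^2)$, and iterate. The only cosmetic difference is that you foreground the consequence $F_i(\bz^*) = 0$ for every $i$; this is not actually load-bearing here, since the proof only invokes strong growth and Lipschitzness at $\bz_k$ together with $F(\bz^*) = 0$, but it does no harm.
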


Compared to the rate of deterministic setting, the rate in Theorem~\ref{thm:SGD} is worse by the constant factor $\delta$. And as expected, the noisier the vector field computation (i.e., larger $\delta$), the slower the convergence. Nevertheless, the scaling with the condition number $\kappa$ matches the deterministic setting, manifesting the robustness of GD. 

\begin{figure}[t]
\vspace{-0.3cm}
	\centering
    \begin{subfigure}[t]{0.45\textwidth}
        \centering
        \includegraphics[width=0.98\textwidth]{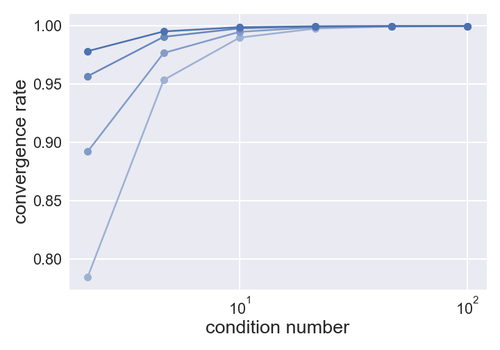}
        \vspace{-0.2cm}
        \caption{Convergence Rate}
    \end{subfigure}
    \begin{subfigure}[t]{0.45\textwidth}
        \centering
        \includegraphics[width=0.98\textwidth]{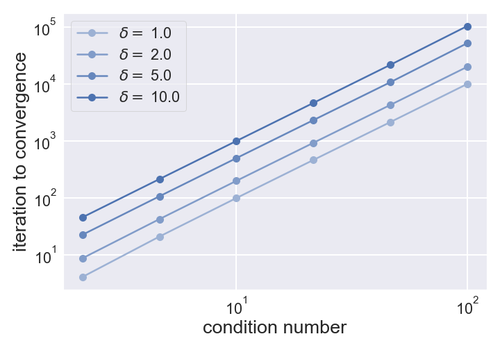}
        \vspace{-0.2cm}
        \caption{Iteration Complexity}
    \end{subfigure}
    \vspace{-0.25cm}
	\caption{Convergence rate (or iteration complexity) of GD with tuned stepsize as a function of condition number under different noise levels. $\delta = 1$ is basically the deterministic setting we studied. For a given $\delta$, it takes $\bigo(\kappa^2)$ iterations to converge no matter how large $\delta$ is.}
	\label{fig:GDA-sto}
\end{figure}
To sanity check our IQC framework, we also compute the rate bounds using Theorem~\ref{thm:jump-system} by setting $n = 2$ for convenience. We note that the result is independent of the value of $n$, choosing $n = 2$ makes the SDP problem easy to solve. We observe that the numerical rates obtained by our IQC framework match the prediction of Theorem~\ref{thm:SGD} exactly, as shown in Figure~\ref{fig:GDA-sto}. This also implies that the upper bound in Theorem~\ref{thm:SGD} is probably sharp.

\subsection{The Brittleness of Optimistic Gradient Method and Negative Momentum}
\begin{figure}[t]
\vspace{-0.3cm}
	\centering
    \begin{subfigure}[t]{0.45\textwidth}
        \centering
        \includegraphics[width=0.98\textwidth]{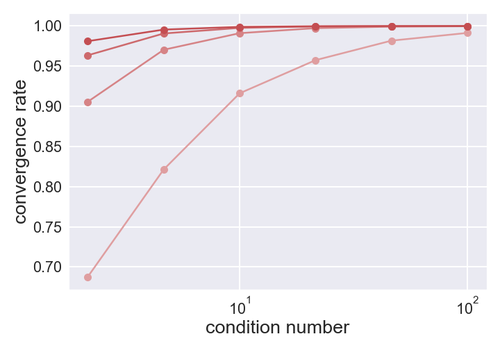}
        \vspace{-0.2cm}
        \caption{Convergence Rate}
    \end{subfigure}
    \begin{subfigure}[t]{0.45\textwidth}
        \centering
        \includegraphics[width=0.98\textwidth]{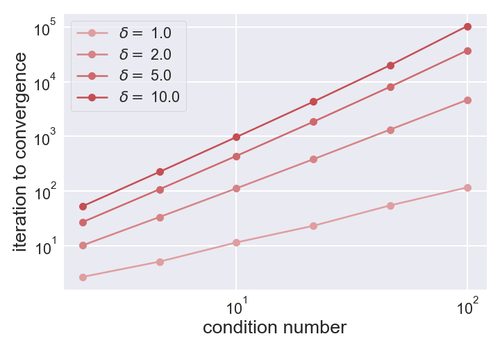}
        \vspace{-0.2cm}
        \caption{Iteration Complexity}
    \end{subfigure}
    \vspace{-0.25cm}
	\caption{Convergence rate (or iteration complexity) of OG with tuned stepsize as a function of condition number under different noise levels. In the case of $\delta = 1$ (i.e., the deterministic setting), it takes $\bigo(\kappa)$ iterations for OG to converge. Increasing the noise level with a larger $\delta$ degrades the rate. When $\delta = 10$, the iteration complexity is close to $\bigo(\kappa^2)$, which is no better than GD.}
	\label{fig:OGDA-sto}
\end{figure}
As discussed in the last section, GD is robust to multiplicative noise when it satisfies the strong growth condition~\eqref{eq:strong-growth}. It is natural to ask whether the same is true of OG and NM. Namely, are they able to match their respective deterministic convergence rates and hence accelerate GD in the stochastic setting?

We compute the convergence rates of OG using Theorem~\ref{thm:jump-system} together with the sector and off-by-one IQCs. 
We search for the optimal step size $\eta$ using grid-search. As shown in Figure~\ref{fig:OGDA-sto}, the convergence rate of optimally tuned OG deteriorates as we use $\delta > 1$ and the complexity is roughly $\bigo(\kappa^2)$ when $\delta \gg 1$. In other words, the convergence rate of OG is no better than that of GD in the stochastic setting. %

\begin{figure}[t]
\vspace{-0.3cm}
	\centering
    \begin{subfigure}[t]{0.45\textwidth}
        \centering
        \includegraphics[width=0.98\textwidth]{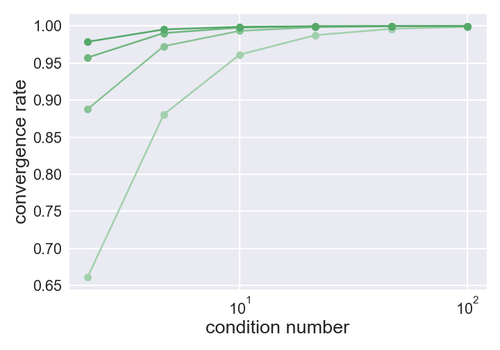}
        \vspace{-0.2cm}
        \caption{Convergence Rate}
    \end{subfigure}
    \begin{subfigure}[t]{0.45\textwidth}
        \centering
        \includegraphics[width=0.98\textwidth]{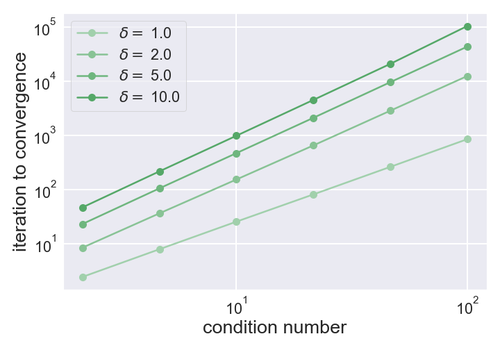}
        \vspace{-0.2cm}
        \caption{Iteration Complexity}
    \end{subfigure}
    \vspace{-0.25cm}
	\caption{Convergence rate (or iteration complexity) of NM with tuned stepsize as a function of condition number under different noise levels. In the case of $\delta = 1$ (i.e., the deterministic setting), it takes $\bigo(\kappa^{1.5})$ iterations for NM to converge. Increasing the noise level with a larger $\delta$ degrades the rate. When $\delta = 10$, the iteration complexity is close to $\bigo(\kappa^2)$, which is no better than GD.}
	\label{fig:nm-sto}
\end{figure}
We also analyze negative momentum (NM) using Theorem~\ref{thm:jump-system} with the momentum parameter $\beta = \kappa^{-0.5} - 1$ and tuned step size. Figure~\ref{fig:nm-sto} shows the plots of convergence rate and iteration complexity for different noise levels. Similar to OG, NM suffers as we gradually increase the noise level $\delta$ from $1$ to $10$. In particular, its complexity scales quadratically as a function of condition number when $\delta \gg 1$. This is to be expected by analogy with the minimization case that momentum method is fragile to injected noise.

\subsection{Is It Possible to Accelerate GD in the Stochastic Setting?}
\begin{figure}[t]
\vspace{-0.3cm}
	\centering
    \begin{subfigure}[t]{0.45\textwidth}
        \centering
        \includegraphics[width=0.98\textwidth]{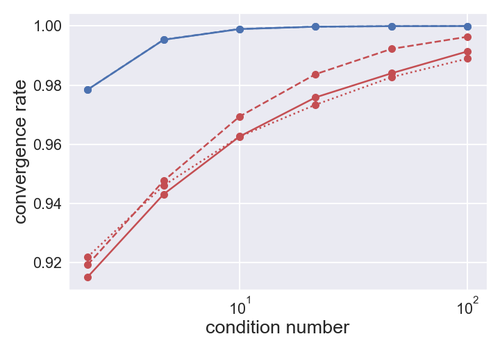}
        \vspace{-0.2cm}
        \caption{Convergence Rate}
    \end{subfigure}
    \begin{subfigure}[t]{0.45\textwidth}
        \centering
        \includegraphics[width=0.98\textwidth]{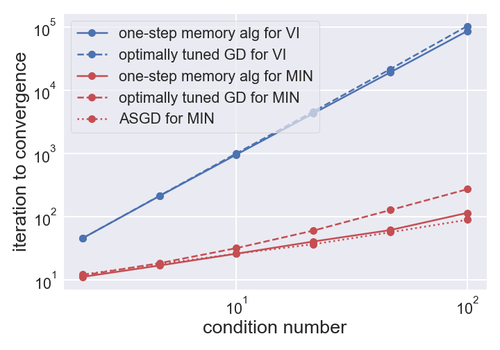}
        \vspace{-0.2cm}
        \caption{Iteration Complexity}
    \end{subfigure}
    \vspace{-0.2cm}
	\caption{Comparison between optimally tuned GD and optimally tuned one step memory algorithm in the case $\delta = 10$. For one step memory algorithm, we search $\beta + 1$ or $1 - \beta$ in log space uniformly from $[\frac{1}{\kappa \delta}, 1.0]$. For $\alpha$, we search over $[0.0, 0.1, 0.2, 0.5, 1.0, 2.0, 5.0, 10.0, 20.0, 50.0, 100.0]$. We use VI for the abbreviation of variational inequality and MIN for minimization. Accelerated Stochastic Gradient Descent (ASGD)~\citep{jain2018accelerating} is a variant of the Nesterov Accelerated Gradient which is able to accelerate SGD for minimizing strongly-convex functions under the strong growth condition. We provide a detailed proof for the acceleration effect of ASGD in Appendix~\ref{app:asgd}.}
	\label{fig:sto-comp}
\end{figure}

In the last section, we showed that both OG and NM fail to accelerate GD in the presence of noise. One may ask: \emph{does there exist any algorithm with only one step of memory that can achieve acceleration in the stochastic setting?}
In this section, we first show that acceleration is impossible if the algorithm queries each batch of data only once before moving on to the next one. We then answer our question in the affirmative by showing there exists an algorithm achieving acceleration by querying each batch of data twice.

We first search over algorithms with one step of memory~\eqref{eq:second-diff} by doing a grid search over values of $\alpha$ and $\beta$ for every particular condition number $\kappa$.
In particular, we set $\delta$ to be $10$ since the slope of resulting curve stays unchanged with larger $\delta$. This experiment is easy to carry out in our framework, because choosing new values of $\alpha$ and $\beta$ simply amounts to changing parameters in the LMI. 
We find that no algorithm is provable (under our IQC model) to obtain a faster convergence rate than the $\bigo(\kappa^2)$ rate obtained for GD (see Figure~\ref{fig:sto-comp}). This is in stark contrast to minimizing a strongly-convex function, where there is an algorithm with one step of memory accelerating GD under the strong growth condition~\citep{jain2018accelerating, vaswani2019fast}. More importantly, we do match the rate of this algorithm by conducting the same grid search over $\alpha$ and $\beta$ (see red solid line). 

\begin{figure}[t]
\vspace{-0.3cm}
	\centering
    \begin{subfigure}[t]{0.45\textwidth}
        \centering
        \includegraphics[width=0.98\textwidth]{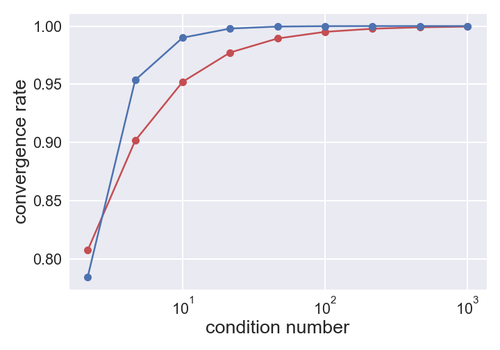}
        \vspace{-0.2cm}
        \caption{Convergence Rate}
    \end{subfigure}
    \begin{subfigure}[t]{0.45\textwidth}
        \centering
        \includegraphics[width=0.98\textwidth]{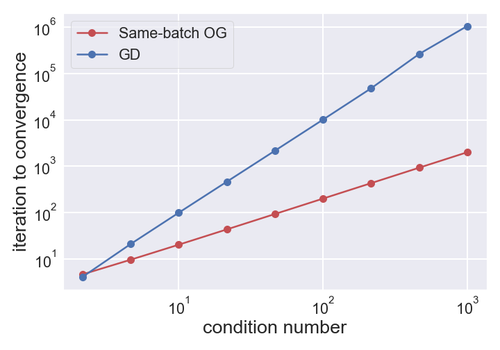}
        \vspace{-0.2cm}
        \caption{Iteration Complexity}
    \end{subfigure}
    \vspace{-0.2cm}
	\caption{Convergence rates and iteration complexities of GD and same-batch OG under the assumptions \eqref{eq:new-assumption}. By using the same stochastic operator (i.e., query the same batch twice) in \eqref{eq:same-sample-ogda}, same-batch OG accelerates GD with an iteration complexity of $\bigo(\kappa)$.}
	\label{fig:ssogda}
\end{figure}
The previous result inspires us to re-examine the reason why OG can accelerate GD in the first place when $F$ is noiseless.
By inspecting $\lambda_i$ in the final solution of the LMI~\eqref{eq:sdp}\footnote{In all four quadratic constraints, only the strongly-monotone sector IQC and the Lipschitz off-by-one pointwise IQC are used with non-zero $\lambda$.}, we notice the convergence analysis of deterministic OG heavily relies on the following property:
\begin{equation}\label{eq:ogda-lipschitz}
    \|\bz_{k+1} - \bz_{k+1/2} \| = \eta \|F(\bz_{k+1/2}) - F(\bz_{k-1/2}) \|_2 \leq \eta L \|\bz_{k+1/2} - \bz_{k-1/2} \|_2.
\end{equation}
where we used the $L$-Lipschitz assumption of $F$.
However, when the stochastic update is used, this property no longer holds. Recall the OG update in the stochastic setting:
\begin{equation}\label{eq:sto-ogda}
    \bz_{k+1} = \bz_k - \eta F(2\bz_k - \bz_{k-1}; \epsilon_k).
\end{equation}
According to Proposition~\ref{prop:ogda}, OG can be rewritten as the following form:
\begin{equation}
    \begin{aligned}
        \bz_{k+1/2} &= \bz_k - \eta F(\bz_{k-1/2}; \epsilon_{k-1}), \\
        \bz_{k+1} &= \bz_k - \eta F(\bz_{k+1/2}; \epsilon_{k}).
    \end{aligned}
\end{equation}
Observe that two different stochastic operators $F(\cdot; \epsilon_{k-1})$ and $F(\cdot ; \epsilon_{k})$ are used, so \eqref{eq:ogda-lipschitz} need not hold for any value of $L$. One can fix this problem by sharing the same stochastic operator (e.g.~using the same batch of data) to compute the updates, which we term \emph{same-batch OG}. This fix was first proposed in~\citet{mishchenko2020revisiting} for extra-gradient.
\begin{equation}\label{eq:same-sample-ogda}
    \begin{aligned}
        \bz_{k+1/2} &= \bz_k - \eta F(\bz_{k-1/2}; \epsilon_{k}), \\
        \bz_{k+1} &= \bz_k - \eta F(\bz_{k+1/2}; \epsilon_{k}).
    \end{aligned}
\end{equation}
To prove convergence, we have to replace Assumption~\eqref{ass:monotonicity} and~\eqref{ass:lipschitz} with stronger assumptions ($\bz_1, \bz_2$ could depend on $\epsilon$):
\begin{equation}\label{eq:new-assumption}
\begin{aligned}
    \expect [\|F(\bz_1; \epsilon) - F(\bz_2; \epsilon) \|_2^2] &\leq  \expect [L(\epsilon)^2 \|\bz_1 - \bz_2 \|_2^2] \leq  L^2 \expect [\|\bz_1 - \bz_2 \|_2^2] ,\\
    \expect [(F(\bz_1; \epsilon) - F(\bz_2; \epsilon))^\top (\bz_1 - \bz_2)] &\geq \expect [m(\epsilon) \|\bz_1 - \bz_2 \|_2^2] \geq m \expect [\|\bz_1 - \bz_2 \|_2^2].
\end{aligned}
\end{equation}
Basically, we allow different $F(\cdot; \epsilon)$ to have distinct Lipschitz and monotone constants. With minor modifications to our IQC analysis (see Appendix~\ref{app:sbog} for details), we can show same-batch OG~\eqref{eq:same-sample-ogda} accelerates GD with an iteration complexity of $\bigo(\kappa)$, as seen in Figure~\ref{fig:ssogda}. We remark that assumptions~\eqref{eq:new-assumption} are less restrictive than the ones used in~\citep{mishchenko2020revisiting} where they require $F(\cdot; \epsilon)$ to be almost surely strongly-monotone and Lipschitz.

\section{Discussion}
Smooth game optimization has recently emerged as a new paradigm for many models in machine learning due to its flexibility to model multiple players and their interactions. Nevertheless, the dynamics of games are more complicated than their single-objective counterparts, and raise new algorithmic challenges. 
We believe a unified and systematic analysis framework is crucial, since it could save us from the pain of analyzing algorithms in a case-by-case manner. To this end, we argue that the introduced IQC framework is a very powerful tool to study game dynamics, especially when the system contains nonlinear and uncertain ingredients. %

We note that our current framework is limited to strongly-monotone and smooth games, but other techniques from control theory (e.g., dissipativity theory~\citep{hu2017dissipativity}) may allow us to certify sublinear rates in general monotone games. 
Similarly to~\citet{lessard2016analysis}, our IQC framework could also be extended to the non-smooth setting. However, the numerical results might be less interpretable because most algorithms fail to attain linear convergence in the non-smooth setting.
Another limitation is that our IQC framework is not generally applicable for algorithms accessing higher-order information (e.g., competitive gradient descent~\citep{schafer2019competitive}). Nevertheless, for algorithms that can be written as a first-order method on modified utility functions (e.g., consensus optimization~\citep{mescheder2017numerics}\footnote{Consensus optimization can be viewed as gradient descent algorithm on modified objectives that include additional gradient norm penalties.}), it is possible to apply our IQC framework for tight convergence analysis.
Exploring new types of IQCs that can be used to analyze algorithms using high-order information would be an interesting future direction.

So far for all the algorithms we analyzed, we have shown that our IQC framework provides tight bound certification as long as the algorithm can fit into the variational inequality framework. To be noted, our framework can also be used to analyze the extra-gradient method which we did not discuss in the paper.
Nonetheless, for problems with additional structure (e.g., the bilinear saddle point problem $\min_x \max_y f(x) - g(y) + x^\top B y$), additional modifications are required to take into account the structural information for tight bounds.

Finally, one of the biggest limitations of our IQC framework is that it provides only a numerical proof, except in simpler cases where the SDP can be solved analytically. However, as noted in~\citet{lessard2016analysis}, it might be possible to find analytical proofs for complex SDPs using tools from algebraic geometry~\citep{grayson2002macaulay2, rostalski2010dualities}. Furthermore, there might exist examples that require large numbers of IQCs to get tight bounds, making the corresponding SDPs hard to solve. In our own investigations, a handful of IQCs have sufficed to obtain tight bounds, and we expect that other limited memory algorithms can be analyzed with similarly compact IQCs.

\acks
We thank Bryan Van Scoy and Yuanhao Wang for many helpful discussions. We thank Shengyang Sun, Xuechen Li and Guojun Zhang for detailed comments on early drafts. 
We also thank Adrien Taylor for pointing out a mistake of the necessary and sufficient condition for $\{m, L\}$-interpolation in the first version of our paper. Besides, we thank the anonymous JMLR reviewers for their useful feedback on earlier versions of this manuscript.

GZ would like to thank for the supports from Borealis AI fellowship and Ontario Graduate Scholarship. RG acknowledges support from the CIFAR Canadian AI Chairs program.

\newpage
\appendix

\section{Proofs for Theoretical Results}
\subsection{Proofs for Section~\ref{sec:preliminary}}
\mainthm*

\begin{proof}
    Let $x, u, s$ be a set of sequences that satisfies \eqref{eq:dynamics-compact}. Suppose $(P, \lambda)$ is a solution of SDP~\eqref{eq:sdp}. Multiply \eqref{eq:sdp} on the left and right by $[(x_k - x^*)^\top, (u_k - u^*)^\top]$ and its transpose, respectively. Making use of \eqref{eq:dynamics-compact} and \eqref{eq:ass-ref}, we obtain
    \begin{equation}\label{eq:descent-inequality}
        (x_{k+1} - x^*)^\top P(x_{k+1} - x^*) - \rho^2 (x_{k} - x^*)^\top P (x_{k} - x^*) + \lambda (s_{k} - s^*)^\top M (s_{k} - s^*) \leq 0
    \end{equation}
    Because $F$ satisfies the pointwise IQC definied by $(\Psi, M)$, therefore we obtain
    \begin{equation*}
        (x_{k+1} - x^*)^\top P(x_{k+1} - x^*) \leq \rho^2 (x_{k} - x^*)^\top P (x_{k} - x^*)
    \end{equation*}
    for all $k$ and consequently $\|x_k - x^*\|_2 \leq \sqrt{\text{cond}(P)}\rho^k \|x_0 - x^*\|_2$. Recall from \eqref{eq:dynamics-compact} that $x_k = (\xi_k, \zeta_k)$ and $\zeta_0 = \zeta^*$, we therefore have
    \begin{equation*}
    \begin{aligned}
        \|\xi_k - \xi^*\|_2^2 &\leq \|x_k - x^*\|_2^2  \\
        &\leq \text{cond}(P) \rho^{2k}\|x_0 - x^*\|_2^2 \\
        &= \text{cond}(P) \rho^{2k} (\|\xi_0 - \xi^*\|_2^2 + \|\zeta_0 - \zeta^*\|_2^2) \\
        &= \text{cond}(P) \rho^{2k} \|\xi_0 - \xi^*\|_2^2
    \end{aligned}
    \end{equation*}
    and this completes the proof.
\end{proof}

\subsection{Proofs for Section~\ref{sec:iqc-vi}}
\sector*
\begin{proof}
Two quadratic inequalities follows immediately from \eqref{eq:monotone} and \eqref{eq:lispchitz}.
\end{proof}

\offbyone*
\begin{proof}
We note two quadratic inequalities follows immediately from \eqref{eq:monotone} and \eqref{eq:lispchitz} by using $(\bz_1, \bz_2) \rightarrow (y_{k+1}, y_{k})$. To verify the IQC factorization, we note the state equations for $\Psi$ given in Lemma~\ref{lem:off-by-one} are
\begin{equation*}
    \zeta_{k+1} = \begin{bmatrix} y_k \\ u_k \end{bmatrix} \quad \text{and} \quad s_k = \begin{bmatrix} y_k - y_{k-1} \\ u_k - u_{k-1} \end{bmatrix}
\end{equation*}
and it follows that $(s_k - s^*)^\top M_1 (s_k - s^*)$ and $(s_k - s^*)^\top M_2 (s_k - s^*)$ are equivalent to quadratic constraints \eqref{eq:off-by-one-const}, as required.
\end{proof}

\optgd*
\begin{proof}
To prove the Theorem, we need to first define a family of algorithms which are expressive enough. Following on~\citet{hu2017control, lessard2019direct}, we will consider algorithms set up as in Figure~\ref{fig:iter-alg}.
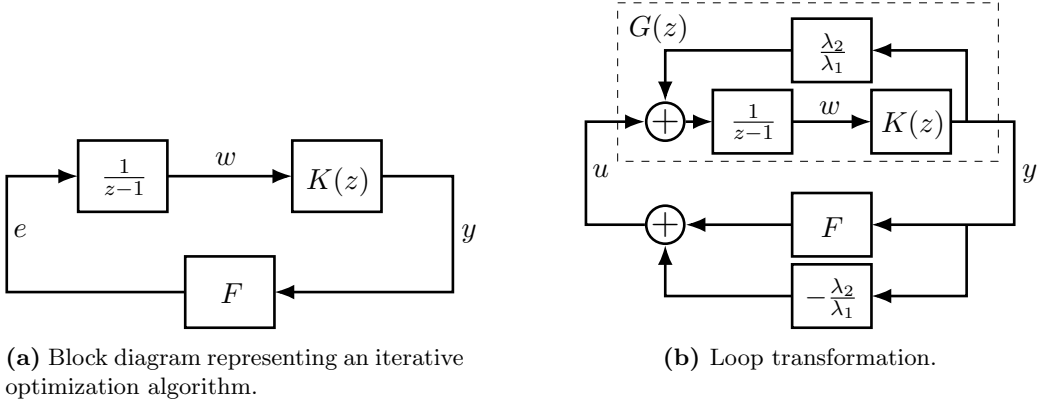
\begin{figure}[h]
\centering
\begin{subfigure}[t]{0.4\textwidth}
\centering
\begin{tikzpicture}[x=0.75pt,y=0.75pt,yscale=-0.9,xscale=0.9]
\draw   (170,105) -- (220,105) -- (220,145) -- (170,145) -- cycle ;
\draw   (110,170) -- (160,170) -- (160,210) -- (110,210) -- cycle ;
\draw   (50,105) -- (100,105) -- (100,145) -- (50,145) -- cycle ;
\draw[-{Latex[scale=1.0]}]   (220,125) -- (260,125) -- (260, 190) -- (160, 190)  ;
\draw[-{Latex[scale=1.0]}]   (110,190) -- (10,190) -- (10,125) -- (50,125) ;
\draw[-{Latex[scale=1.0]}]   (100,125) -- (170,125) ;
\draw (61,114) node [anchor=north west][inner sep=0.75pt]    {$\frac{1}{z-1}$};
\draw (176,118) node [anchor=north west][inner sep=0.75pt]    {$K(z)$};
\draw (128,183) node [anchor=north west][inner sep=0.75pt]    {$F$};
\draw (263,150) node [anchor=north west][inner sep=0.75pt]    {$y$};
\draw (12,150) node [anchor=north west][inner sep=0.75pt]    {$e$};
\draw (125,110) node [anchor=north west][inner sep=0.75pt]    {$w$};
\end{tikzpicture}
\caption{Block diagram representing an iterative optimization algorithm.}
\label{fig:iter-alg}
\end{subfigure}
\hspace{0.4cm}
\begin{subfigure}[t]{0.5\textwidth}
\centering
\begin{tikzpicture}[x=0.75pt,y=0.75pt,yscale=-0.8,xscale=0.8]
\draw (60,190) node [shape=circle,draw=black,minimum size=0.15cm][inner sep=0.pt] (sum1) {\Large$+$} ;
\draw (60,125) node [shape=circle,draw=black,minimum size=0.15cm][inner sep=0.pt] (sum2) {\Large$+$} ;
\draw   (190,105) -- (240,105) -- (240,145) -- (190,145) -- cycle ;
\draw   (140,170) -- (190,170) -- (190,210) -- (140,210) -- cycle ;
\draw   (90,105) -- (140,105) -- (140,145) -- (90,145) -- cycle ;
\draw   (140,215) -- (190,215) -- (190,255) -- (140,255) -- cycle ;
\draw   (140,60) -- (190,60) -- (190,100) -- (140,100) -- cycle ;
\draw[-{Latex[scale=1.0]}]   (240,125) -- (280,125) -- (280, 190) -- (190, 190)  ;
\draw[-{Latex[scale=1.0]}]   (140,190) -- node {} (sum1) ;
\draw[-{Latex[scale=1.0]}]   node {} (sum1) -- (10,190) -- (10,125) -- node {} (sum2) ;
\draw[-{Latex[scale=1.0]}]   node {} (sum2) -- (90,125) ;
\draw[-{Latex[scale=1.0]}]   (140,125) -- (190,125) ;
\draw[-{Latex[scale=1.0]}]   (250,190) -- (250,235) -- (190,235) ;
\draw[-{Latex[scale=1.0]}]   (140,235) -- (60, 235) -- node {} (sum1) ;
\draw[-{Latex[scale=1.0]}]   (250,125) -- (250,80) -- (190,80) ;
\draw[-{Latex[scale=1.0]}]   (140,80) -- (60, 80) -- node {} (sum2) ;
\draw (99,112) node [anchor=north west][inner sep=0.75pt] (integrator)   {$\frac{1}{z-1}$};
\draw (194,116) node [anchor=north west][inner sep=0.75pt] (K)   {$K(z)$};
\draw (156,183) node [anchor=north west][inner sep=0.75pt]    {$F$};
\draw (283,150) node [anchor=north west][inner sep=0.75pt]    {$y$};
\draw (12,150) node [anchor=north west][inner sep=0.75pt]    {$u$};
\draw (155,110) node [anchor=north west][inner sep=0.75pt]    {$w$};
\draw (154,67) node [anchor=north west][inner sep=0.75pt] (loop)    {$\frac{\lambda_2}{\lambda_1}$};
\draw (146,222) node [anchor=north west][inner sep=0.75pt]    {$-\frac{\lambda_2}{\lambda_1}$};
\draw (35,55) node [anchor=north west][inner sep=0.75pt]    {$G(z)$};

\draw[dashed,line width=0.1mm] (30,50) -- (270,50) -- (270,150) -- (30,150) -- cycle;
\end{tikzpicture}
\caption{Loop transformation.}
\label{fig:loop-trans}
\end{subfigure}
\caption{Block Diagram for control systems.}
\end{figure}

The iterative algorithm must contain a pure integrator, i.e., its transfer function must take the form $K(z) \frac{1}{z - 1}$. where $K(z)$ is an LTI system that represents the algorithm. Assume $K(z)$ has a state space representation $(A_K, B_K, C_K, D_K)$. Let $w \in \real$\footnote{Without loss of generality, we assume the whole system is single-input and single-output.} and $q \in \real^{n_K}$ be the state of integrator and $K(z)$, respectively. The order of $K(z)$, denoted $n_K$, is unspecified at this point, i.e., the algorithm may have a finite but arbitrary amount of memory. A realization of the whole algorithm then is given by
\begin{equation}
\begin{aligned}
    w_{k+1} &= w_k + e_k \\
    q_{k+1} &= A_K q_k + B_K w_k \\
    y_k &= C_K q_k + D_K w_k
\end{aligned}
\end{equation}
We remark that this family of algorithms is very general and can easily represent most algorithms. For example, we can recover momentum method by taking $K(z) = \frac{-\eta z}{z - \beta}$.

Under current assumptions about $F$, we know that we have the following quadratic constraint on the input-output pair if we only consider the sector IQCs:
\begin{equation}
    \begin{bmatrix}
    y_k - y^* \\
    e_k - e^*
    \end{bmatrix}^\top
    \begin{bmatrix}
    \lambda_1 L^2 - 2\lambda_2 m & \lambda_2 \\
    \lambda_2 &  -\lambda_1
    \end{bmatrix}
    \begin{bmatrix}
    y_k - y^* \\
    e_k - e^*
    \end{bmatrix} \geq 0 
\end{equation}
where $\lambda_1$ and $\lambda_2$ are non-negative scalars. An crucial step is now to diagonalize the quadratic constraint. In particular, we perform a loop transformation as shown in Figure~\ref{fig:loop-trans}. After the transformation, the constraint becomes
\begin{equation}
    \begin{bmatrix}
    y_k - y^* \\
    u_k - u^*
    \end{bmatrix}^\top
    \begin{bmatrix}
    \lambda_1 L^2 - 2\lambda_2 m + \frac{\lambda_2^2}{\lambda_1} & 0 \\
    0 &  -\lambda_1
    \end{bmatrix}
    \begin{bmatrix}
    y_k - y^* \\
    u_k - u^*
     \end{bmatrix} \geq 0 
\end{equation}
Notice that the input to $K(z) \frac{1}{z-1}$ is transformed in the form: $e_k = u_k + \frac{\lambda_2}{\lambda_1} y_k$. Therefore, we obtain the following state space realization of $G(z)$ in terms of $(q_k, w_k)$:
\begin{equation}\label{eq:G-K}
\left[\begin{array}{c|c}A_G & B_G \\ \hline C_G & D_G\end{array}\right]
= \left[\begin{array}{cc|c}0 & 0 & 0 \\ 0 & 1 & 1 \\ \hline
0 & 0 & 0
\end{array}\right] \\
+
\left[\begin{array}{cc}\iden & 0 \\ 0 & \frac{\lambda_2}{\lambda_1} \\ \hline 0 & 1 \end{array}\right]
\underbrace{\bmat{A_K & B_K \\ C_K & D_K}}_{K}
\left[\begin{array}{cc|c}\iden & 0 & 0 \\ 0 & 1 & 0
\end{array}\right]
\end{equation}
Combining it with the map $\Psi$ defined in Lemma~\ref{lem:sector-iqc}, we have
\begin{equation*}
    \left[ 
    \begin{array}{c|c}
        \hat{A} & \hat{B} \\ \hline \\[-1.8\medskipamount]
        \hat{C} & \hat{D}
    \end{array}
    \right]
    =
    \left[ 
    \begin{array}{c|c}
        A_G & B_G \\ \hline \\[-1.8\medskipamount]
        \bsmat{ C_G \\ 0 } & \bsmat{ D_G \\ 1 }
    \end{array}
    \right]
\end{equation*}
By Theorem~\ref{thm:main-thm}, iterates converge with rate $\rho \in (0, 1]$ if there exists $P \succ 0$ such that
\begin{equation}
    \bmat{A_G & B_G \\ C_G & D_G}^\top \bmat{ P & 0 \\ 0 & \lambda_1 L^2 - 2\lambda_2 m + \frac{\lambda_2^2}{\lambda_1}} 
    \bmat{A_G & B_G \\ C_G & D_G}
    - \bmat{\rho^2 P & 0 \\ 0 & \lambda_1} \preceq 0.
\end{equation}
According to Schur complements, we can write the equivalent condition:
\begin{equation}\label{eq:eqv-cond}
    \bmat{ \rho^2 P & 0 & A_G^\top & C_G^\top \\ 0 & \lambda_1 & B_G^\top & D_G^\top \\
A_G & B_G & P^{-1} & 0 \\ C_G & D_G & 0 & H^{-1}} \succeq 0,
\quad\text{and}\quad P \succ 0
\end{equation}
where $H \triangleq (\lambda_1 L^2 - 2\lambda_2 m + \frac{\lambda_2^2}{\lambda_1}) \geq 0$. Substituting \eqref{eq:G-K} into \eqref{eq:eqv-cond}, we obtain
\begin{equation}\label{eq:LMI}
    \underbrace{\bmat{ \rho^2 P & \bsmat{0\\0} & \bsmat{0 & 0 \\ 0 & 1} & \bsmat{0 \\ 0} \\
\bsmat{0 & 0} & \lambda_1 & \bsmat{0 & 1} & 0 \\
\bsmat{0 & 0 \\ 0 & 1} & \bsmat{0 \\ 1} & P^{-1} & \bsmat{0\\0} \\
\bsmat{0 & 0} & 0 & \bsmat{0 & 0} & H^{-1} }}_{\Theta}
+
\text{sym} \bmat{ 0 & 0 \\ 0 & 0 \\ \bsmat{\iden \\ 0} & \bsmat{ 0 \\ \tfrac{\lambda_2}{\lambda_1} } \\ 0 & 1 }K
\bmat{ \bsmat{\iden & 0 } & 0 & 0 & 0 \\ \bsmat{ 0 & 1 } & 0 & 0 & 0 } \succeq 0,
\end{equation}
where $\text{sym} X \triangleq X + X^\top$. We need to introduce a Lemma to further simplify the problem:
\begin{lem}[\citet{gahinet1994linear}]
Given a symmetric matrix $\Theta \in \real^{n\times n}$ and two matrices $P, Q$ of column dimension $n$, consider the problem of finding some matrix $\Xi$ of compatible dimensions such that
\begin{equation}\label{eq:LMI-lemma}
    \Theta + P^\top \Xi^\top Q + Q^\top \Xi P \preceq 0
\end{equation}
Denote by $W_P, W_Q$ any matrices whose columns form bases for the null spaces of $P$ and $Q$ respectively. Then there exists $\Xi$ satisfying \eqref{eq:LMI-lemma} if and only if
\begin{equation}
    W_P^\top \Theta W_P \preceq 0 \quad \text{and} \quad W_Q^\top \Theta W_Q \preceq 0
\end{equation}
\end{lem}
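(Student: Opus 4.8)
The plan is to prove the equivalence by its two implications, the forward (necessity) direction being a one-line substitution and the converse carrying essentially all the content. Throughout let $W_P$ and $W_Q$ denote full-column-rank matrices whose columns span $\ker P$ and $\ker Q$, so that $PW_P=\zero$ and $QW_Q=\zero$. For necessity, suppose some $\Xi$ satisfies $\Theta + P^\top\Xi^\top Q + Q^\top\Xi P\preceq 0$. Multiplying this inequality on the left by $W_P^\top$ and on the right by $W_P$ and using $PW_P=\zero$ annihilates both cross terms, leaving $W_P^\top\Theta W_P\preceq 0$; the identical computation with $W_Q$ gives $W_Q^\top\Theta W_Q\preceq 0$.

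For sufficiency, assume $W_P^\top\Theta W_P\prec 0$ and $W_Q^\top\Theta W_Q\prec 0$ (I treat the definite case and comment on the semidefinite statement below). My primary route is a reduction to canonical form: deleting redundant rows we may take $P$ and $Q$ to have full row rank, and a congruence $x\mapsto Tx$ — which sends $\Theta\mapsto T^\top\Theta T$, $P\mapsto PT$, $Q\mapsto QT$ and preserves the whole statement — lets us, in the generic configuration, normalize $P=\bmat{\iden & \zero & \zero}$ and $Q=\bmat{\zero & \iden & \zero}$. Partitioning $\Theta=(\Theta_{ij})$ conformally into $3\times 3$ blocks, the hypotheses read $\bsmat{\Theta_{22} & \Theta_{23}\\ \Theta_{23}^\top & \Theta_{33}}\prec 0$ and $\bsmat{\Theta_{11} & \Theta_{13}\\ \Theta_{13}^\top & \Theta_{33}}\prec 0$ (so in particular $\Theta_{33}\prec 0$), while the only block of $\Theta + P^\top\Xi^\top Q + Q^\top\Xi P$ that $\Xi$ can touch is the $(1,2)$ block $\Theta_{12}+\Xi^\top$. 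Taking a Schur complement with respect to $\Theta_{33}\prec 0$ reduces feasibility to making the $2\times 2$ block matrix with diagonal entries $\Theta_{11}-\Theta_{13}\Theta_{33}^{-1}\Theta_{13}^\top$ and $\Theta_{22}-\Theta_{23}\Theta_{33}^{-1}\Theta_{23}^\top$ (both already negative definite, by Schur complements of the hypotheses) and off-diagonal entry $\Theta_{12}+\Xi^\top-\Theta_{13}\Theta_{33}^{-1}\Theta_{23}^\top$ negative definite; the choice $\Xi^\top=\Theta_{13}\Theta_{33}^{-1}\Theta_{23}^\top-\Theta_{12}$ zeroes the off-diagonal block, leaving a block-diagonal negative-definite matrix, as desired.

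A cleaner but less elementary alternative avoids the configuration analysis: if no $\Xi$ were feasible, the affine set $\{\Theta + P^\top\Xi^\top Q + Q^\top\Xi P:\Xi\}$ would be disjoint from the open cone of negative-definite matrices, so a separating hyperplane yields a nonzero $Z\succeq 0$ with $\langle Z,\Theta\rangle\geq 0$ and $\langle Z,\,Q^\top\Xi P\rangle=0$ for every $\Xi$, i.e. $PZQ^\top=\zero$; splitting $Z=Z_1+Z_2$ with $Z_1,Z_2\succeq 0$, $\mathrm{range}(Z_1)\subseteq\ker P$ and $\mathrm{range}(Z_2)\subseteq\ker Q$, we may write $Z_1=W_P Y_1 W_P^\top$ and $Z_2=W_Q Y_2 W_Q^\top$ with $Y_1,Y_2\succeq 0$ and not both zero, so $\langle Z,\Theta\rangle=\mathrm{tr}(Y_1 W_P^\top\Theta W_P)+\mathrm{tr}(Y_2 W_Q^\top\Theta W_Q)<0$, contradicting $\langle Z,\Theta\rangle\geq 0$.

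I expect the real obstacle to lie in the \emph{degenerate} configurations of the sufficiency direction: when $\bmat{P^\top & Q^\top}$ is rank-deficient (equivalently, $\ker P$ and $\ker Q$ do not together span $\real^n$), the congruence above cannot simultaneously put $P$ and $Q$ into the clean form used, so one must first peel off the shared directions with an extra congruence/Schur step, or else commit to the separation argument — whose own crux is then the purely linear-algebraic fact that a positive semidefinite $Z$ with $PZQ^\top=\zero$ splits into positive semidefinite summands supported on $\ker P$ and on $\ker Q$, which I would establish by factoring $Z=GG^\top$ and analyzing $\mathrm{range}(G)$ against $\ker P\cap\ker Q$ and complementary subspaces. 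A secondary subtlety is the non-strict ($\preceq$) form of the statement: it is literally valid under the mild regularity present in the LMI~\eqref{eq:LMI} to which it is applied here, and in general one recovers it from the strict version by perturbing $\Theta\mapsto\Theta-\epsilon\iden$ and letting $\epsilon\downarrow 0$.
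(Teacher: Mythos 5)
The paper never proves this lemma: it is the classical Projection (Elimination) Lemma, quoted from \citet{gahinet1994linear} and used as a black box inside the proof of Theorem~\ref{thm:opt-gda}, so your attempt can only be measured against the standard arguments. For the \emph{strict} version of the statement your proposal is essentially right: the necessity direction is the one-line substitution you give; the canonical-form/Schur route is correct but, as you yourself flag, only covers the generic configuration in which the row spaces of $P$ and $Q$ are jointly independent; and the separation/duality route does close the general case — its crux, that a nonzero $Z\succeq 0$ with $PZQ^\top=\zero$ splits as $Z_1+Z_2$ with $Z_1,Z_2\succeq 0$ supported on $\ker P$ and $\ker Q$, is true and is proved exactly along the line you sketch: write $Z=GG^\top$, note $(PG)(QG)^\top=\zero$, let $\Pi$ be the orthogonal projection onto the row space of $QG$, and take $Z_1=G\Pi G^\top$, $Z_2=G(\iden-\Pi)G^\top$.

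The genuine gap is your handling of the non-strict form, which is what the statement literally asserts. The ``$\preceq$'' version is false in general, so no argument can establish it: take $n=2$, $\Theta=\bsmat{0 & 1\\ 1 & 0}$, $P=Q=\bmat{1 & 0}$, so $W_P=W_Q=\bsmat{0\\1}$ and $W_P^\top\Theta W_P=W_Q^\top\Theta W_Q=0\preceq 0$, yet $\Theta+P^\top\Xi^\top Q+Q^\top\Xi P=\bsmat{2\Xi & 1\\ 1 & 0}$ has determinant $-1$ for every scalar $\Xi$ and is never negative semidefinite. Your proposed fix---perturb $\Theta\mapsto\Theta-\epsilon\iden$ and let $\epsilon\downarrow 0$---does make the projected blocks strictly negative (since $W_P,W_Q$ have full column rank), but the feasible $\Xi_\epsilon$ produced by the strict lemma need not stay bounded; in the example above one finds $\Xi_\epsilon\to-\infty$, so no limit exists, consistent with the failure of the non-strict conclusion. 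The accurate classical statement (and the one in \citet{gahinet1994linear}) has strict inequalities throughout; the non-strict phrasing here is an inaccuracy of the paper's statement rather than something you could have proved, and your hedge that it is ``valid under the mild regularity present in \eqref{eq:LMI}'' is left unsubstantiated. It is worth noting that the paper's application only uses the easy necessity direction---feasibility of \eqref{eq:LMI} implies the two null-space conditions, which then lower-bound $\rho$---and that direction does survive with $\preceq$ by your one-line argument, so the overall conclusion of Theorem~\ref{thm:opt-gda} is unaffected.
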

By this Lemma, we know that \eqref{eq:LMI} is feasible if and only if a pair of conditions hold. In that case, the conditions are:
\begin{equation}
\bmat{ \rho^2 P & \bsmat{0\\0} & \bsmat{0\\1} \\
\bsmat{ 0 & 0 } & \lambda_1 & 1 \\
\bsmat{0 & 1 } & 1  & \bsmat{0\\1}^\top \! P^{-1}\! \bsmat{0\\1} +(\tfrac{\lambda_2}{\lambda_1})^2 H^{-1} } \succeq 0, \quad
\bmat{ \lambda_1 & \bsmat{0 & 1} & 0 \\ \bsmat{ 0 \\ 1} & P^{-1} & \bsmat{0\\0} \\ 0 & \bsmat{0 & 0} & H^{-1}} \succeq 0
\end{equation}
Using Schur complements again, we have ($r \triangleq \bsmat{ 0 \\ 1}^\top  P^{-1}  \bsmat{0\\1}$)
\begin{equation}
    r + (\tfrac{\lambda_2}{\lambda_1})^2H^{-1} - (\rho^{-2}r + \tfrac{1}{\lambda_1} ) \geq 0 \quad \text{and} \quad
    r \geq \tfrac{1}{\lambda_1}
\end{equation}
After some manipulations, we have
\begin{equation}
    \rho^2 \geq 1 - 2 \tfrac{\lambda_1}{\lambda_2} m + (\tfrac{\lambda_1}{\lambda_2})^2 L^2
\end{equation}
Optimizing over $\tfrac{\lambda_1}{\lambda_2}$ yields $\rho^2 \geq 1 - \frac{1}{\kappa^2}$, which is exactly the convergence rate of GD in this setting. %
\end{proof}

\ppm*
\begin{proof}
By Theorem~\ref{thm:main-thm}, we get the following SDP by setting $P = 1$:
\begin{equation*}
    \begin{bmatrix}
    1 - \rho^2 & -\eta \\
    -\eta & \eta^2
    \end{bmatrix} + 
    \lambda_1 
    \begin{bmatrix}
    L^2 & -\eta L^2 \\
    -\eta L^2 & \eta^2 L^2 -1
    \end{bmatrix} + 
    \lambda_2 
    \begin{bmatrix}
    -2m & 2\eta m + 1 \\
    2 \eta m + 1 & -2\eta^2 m - 2\eta
    \end{bmatrix} \preceq 0
\end{equation*}
Using Schur complements, the SDP is equivalent to
\begin{equation*}
\begin{aligned}
    & \eta^2 (1 + \lambda_1 L^2 - 2\lambda_2 m) - \lambda_1 - 2\lambda_2 \eta \leq 0, \lambda_1 \geq 0, \lambda_2 \geq 0 \\
    & \rho^2 \geq 1 + \lambda_1 L^2 - 2 \lambda_2 m + \frac{[(1 + \lambda_1 L^2 - 2\lambda_2 m)\eta - \lambda_2]^2}{\lambda_1 + 2\lambda_2 \eta - \eta^2 (1 + \lambda_1 L^2 - 2\lambda_2 m)}
\end{aligned}
\end{equation*}
For notational convenience, we let $\Delta \triangleq 1 + \lambda_1 L^2 - 2\lambda_2 m$ and then we have
\begin{equation}\label{eq:rho-lower-bound}
\begin{aligned}
    & \eta^2 \Delta - \lambda_1 - 2\lambda_2 \eta \leq 0, \lambda_1 \geq 0, \lambda_2 \geq 0 \\
    & \rho^2 \geq \Delta + \frac{(\eta\Delta - \lambda_2)^2}{\lambda_1 + 2\lambda_2 \eta - \eta^2 \Delta} = \frac{\lambda_1 \Delta + \lambda_2^2}{\lambda_1 + 2\lambda_2 \eta - \eta^2 \Delta}
\end{aligned}
\end{equation}
We notice that $\rho^2$ yields the smallest value when $\eta = \lambda_2 / \Delta$. Therefore, we have
\begin{equation}\label{eq:lam2}
    \eta (1 + \lambda_1 L^2 - 2\lambda_2 m) = \lambda_2 \Rightarrow \lambda_2 = \frac{\eta (1 + \lambda_1 L^2)}{1 + 2 \eta m}
\end{equation}
Plugging \eqref{eq:lam2} back into \eqref{eq:rho-lower-bound}, we obtain
\begin{equation*}
    \rho^2 \geq \frac{\lambda_2}{\eta} = \frac{1 + \lambda_1 L^2}{1 + 2\eta m} \geq \frac{1}{1 + 2\eta m}
\end{equation*}
where the last inequality follows from the fact that $\lambda_1 \geq 0$. We finish the proof.
\end{proof}

\ogaseg*
\begin{proof}
With some manipulations, we have
\begin{equation*}
    \bz_{k+1} = \bz_k - \eta F(\bz_k - \eta F(\bz_{k-1/2}))
\end{equation*}
Notice that $\eta F(\bz_{k-1/2}) =  \bz_{k-1} - \bz_k$, we then get
\begin{equation*}
    \bz_{k+1} = \bz_k - \eta F(2\bz_k - \bz_{k-1})
\end{equation*}
We therefore conclude that OG is an approximation to EG using the past gradient.
\end{proof}

\ograte*
\begin{proof}
Recall Proposition~\ref{prop:ogda}, we have
\begin{align}\label{eq:thm4-eq1}
    \| \bz_{k+1} - \bz^* \|_2^2 &= \| \bz_{k} - \eta F(\bz_{k+1/2}) - \bz^* \|_2^2 \notag \\
    & = \|\bz_{k} - \bz^*\|_2^2 - 2 \eta F(\bz_{k+1/2})^\top (\bz_{k+1} - \bz^*) - \| \bz_{k+1} - \bz_k \|_2^2
\end{align}
Also, we notice that
\begin{align}\label{eq:thm4-eq2}
    \| \bz_{k+1} - \bz_k \|_2^2 &= \|\bz_{k+1} - \bz_{k+1/2} - \eta F(\bz_{k-1/2}) \|_2^2 \notag \\
    & = \|\bz_{k+1} - \bz_{k+1/2}\|_2^2 + \| \bz_{k+1/2} - \bz_k \|_2^2 - 2\eta F(\bz_{k-1/2})^\top(\bz_{k+1} - \bz_{k+1/2})
\end{align}
Plugging \eqref{eq:thm4-eq2} back into \eqref{eq:thm4-eq1}, we have
\begin{align}
    \| \bz_{k+1} - \bz^* \|_2^2 &= \|\bz_{k} - \bz^*\|_2^2 + \|\bz_{k+1} - \bz_{k+1/2}\|_2^2 - \| \bz_{k+1/2} - \bz_k \|_2^2 \notag \\
    &\qquad\qquad -2\eta F(\bz_{k+1/2})^\top(\bz_{k+1/2} - \bz^*) \notag \\
    & \leq \|\bz_{k} - \bz^*\|_2^2 - \| \bz_{k+1/2} - \bz_k \|_2^2 + \eta^2 L^2 \| \bz_{k+1/2} - \bz_{k-1/2} \|_2^2 \notag \\
    &\qquad\qquad -2\eta F(\bz_{k+1/2})^\top(\bz_{k+1/2} - \bz^*)
\end{align}
where we used the Lipschtiz assumption of vector field $F$. Also by strongly monotonicity, we have
\begin{align}
    -2 F(\bz_{k+1/2})^\top(\bz_{k+1/2} - \bz^*) &\leq -2m \|\bz_{k+1/2} - \bz^* \|_2^2 \notag \\
    & \leq -m \|\bz_k - \bz^* \|_2^2 + 2m \| \bz_{k+1/2} - \bz_k \|_2^2
\end{align}
We therefore have
\begin{multline}\label{eq:thm4-eq3}
    \| \bz_{k+1} - \bz^* \|_2^2 \leq (1 - \eta m)\|\bz_k - \bz^* \|_2^2 - (1 - 2\eta m)\| \bz_{k+1/2} - \bz_k \|_2^2 \\
    + \eta^2 L^2 \| \bz_{k+1/2} - \bz_{k-1/2} \|_2^2
\end{multline}
By further noticing that
\begin{align}\label{eq:thm4-eq4}
    2 \| \bz_{k+1/2} - \bz_{k-1/2} \|_2^2 &\leq 4 \| \bz_{k+1/2} - \bz_{k} \|_2^2 + 4 \| \bz_{k} - \bz_{k-1/2} \|_2^2 \notag \\
    & \leq 4 \| \bz_{k+1/2} - \bz_{k} \|_2^2 + 4 \eta^2 L^2 \| \bz_{k-1/2} - \bz_{k-3/2} \|_2^2
\end{align}
where we used the Lipschitz assumption again. Finally, combining \eqref{eq:thm4-eq3} and \eqref{eq:thm4-eq4}, we get
\begin{multline}
    \| \bz_{k+1} - \bz^* \|_2^2 \leq (1 - \eta m)\|\bz_k - \bz^* \|_2^2 - (1 - 2\eta m - 4\eta^2 L^2)\| \bz_{k+1/2} - \bz_k \|_2^2 \\
    + 4 \eta^4 L^4 \| \bz_{k-1/2} - \bz_{k-3/2} \|_2^2 - \eta^2 L^2 \| \bz_{k+1/2} - \bz_{k-1/2} \|_2^2
\end{multline}
Taking $\eta = 1/(4L)$, we have
\begin{equation*}
    \| \bz_{k+1} - \bz^* \|_2^2 + \frac{1}{16} \| \bz_{k+1/2} - \bz_{k-1/2} \|_2^2 \leq \left(1 - \frac{1}{4L} \right) \left(\| \bz_{k} - \bz^* \|_2^2 + \frac{1}{16} \| \bz_{k-1/2} - \bz_{k-3/2} \|_2^2\right)
\end{equation*}
This completes the proof.
\end{proof}

\subsection{Proofs for Section~\ref{sec:sto-games}}
\stojump*
\begin{proof}
Let $x, u, s$ be a set of sequences that satisfies \eqref{eq:jump-compact}. Take the Lynapunov function with the form $V(x_k) = (x_k - x^*)^\top P (x_k - x^*)$, we then have the following relation:
\begin{equation*}
    \expect [V(x_{k+1})] = \sum_{i=1}^n [\hat{A}(x_k - x^*) + \hat{B}_i (u_k - u^*)]^\top P [\hat{A}(x_k - x^*) + \hat{B}_i (u_k - u^*)]
\end{equation*}
Multiply \eqref{eq:sdp-jump} on the left and right by $[(x_k - x^*)^\top, (u_k - u^*)^\top]$ and its transpose, respectively. We then have for all $k$
\begin{equation*}
    \expect [V(x_{k+1})] \leq \rho^2 V(x_k)
\end{equation*}
Consequently, we have $\expect \|x_k - x^*\|_2^2 \leq \mathrm{cond}(P) \rho^{2k}  \|x_0 - x^*  \|_2^2$. Given that $\|x_k - x^*\|_2^2 = \|\xi_k - \xi^*\|_2^2 + \|\zeta_k - \zeta^*\|_2^2$, we finish the proof.
\end{proof}

\dimensionfree*
\begin{proof}
    For algorithms with one step of memory, we impose the sector and off-by-one pointwise IQCs. Along with the strong growth condition \eqref{eq:strong-growth}, it yields the following state-space matrices in \eqref{eq:jump-compact}:
    \begin{equation}
    \begin{aligned}
        & \hat{A} = \bmat{1+ \beta & -\beta & 0 & 0 \\ 1 & 0 & 0 & 0 \\ 1+\alpha & -\alpha & 0 & 0 \\ 0 & 0 & 0 & 0}, \; \hat{B}_i = \bmat{-\eta \mathbf{e}_i^\top \\ \bm{0}_{1\times n} \\ \bm{0}_{1\times n} \\ \tfrac{1}{n} \bm{1}_{1\times n}}, \; \hat{C} = \bmat{1+\alpha & -\alpha & 0 & 0 \\ 0 & 0 & 0 & 0 \\ 1+\alpha & -\alpha & -1 & 0 \\ 0 & 0 & 0 & -1 \\ \bm{0}_{n \times 1} & \bm{0}_{n \times 1} & \bm{0}_{n \times 1} & \bm{0}_{n \times 1}} \\
        & \hat{D} = \bmat{\bm{0}_{1\times n} \\ \tfrac{1}{n}\bm{1}_{1\times n} \\ \bm{0}_{1\times n} \\ \tfrac{1}{n}\bm{1}_{1\times n} \\ \iden_n},  M = \bmat{\lambda_1 L^2 - 2 \lambda_2 m & \lambda_2 & 0 & 0 & \bm{0}_{n \times 1} \\ \lambda_2 & -\lambda_1 & 0 & 0 & \bm{0}_{n \times 1} \\ 0 & 0 & \lambda_3 L^2 - 2 \lambda_4 m & \lambda_4 & \bm{0}_{n \times 1} \\ 0 & 0 & \lambda_4 & -\lambda_3 & \bm{0}_{n \times 1} \\ \bm{0}_{n \times 1} & \bm{0}_{n \times 1} & \bm{0}_{n \times 1} & \bm{0}_{n \times 1} & \tfrac{\delta \lambda_5}{n}\bm{1}_{n\times n} - \lambda_5 \iden_n}
    \end{aligned}
    \end{equation}
    By Theorem~\ref{thm:jump-system}, we have the following condition to hold:
    \begin{equation}
        \bmat{\Theta_{11} & \Theta_{12} \\ \Theta_{21} & \Theta_{22}} \triangleq \bmat{\hat{A}^\top P \hat{A} - \rho^2 P + \hat{C}^\top M \hat{C} & \frac{1}{n}\sum_{i=1}^n\hat{A}^\top P \hat{B}_i + \hat{C}^\top M \hat{D}\\
        \frac{1}{n}\sum_{i=1}^n\hat{B}_i^\top P \hat{A} + \hat{D}^\top M \hat{C} & \frac{1}{n}\sum_{i=1}^n\hat{B}_i^\top P \hat{B}_i + \hat{D}^\top M \hat{D}} \preceq 0
    \end{equation}
    By Schur complements, we have the following two equivalent conditions:
    \begin{equation}
        \Theta_{11} \preceq 0, \; \Theta_{22} - \Theta_{21} \Theta_{11}^{-1} \Theta_{12} \preceq 0
    \end{equation}
    Note that the first condition is independent of $n$, so we only need to check the second one. After some basic manipulations, we have the second condition as follows:
    \begin{equation}\label{eq:62}
        \underbrace{(\tfrac{P_{11}\eta^2}{n} - \lambda_5) \iden_n + (\tfrac{P_{44}}{n^2} - \tfrac{P_{14}\eta}{n^2} - \tfrac{P_{41}\eta}{n^2} - \tfrac{\lambda_1}{n^2} - \tfrac{\lambda_3}{n^2} +  \tfrac{\lambda_5\delta}{n}) \bm{1}_{n\times n}}_{\Theta_{22}} + \underbrace{\tfrac{K}{n^2} \bm{1}_{n \times n}}_{ \Theta_{21} \Theta_{11}^{-1} \Theta_{12}} \preceq 0,
    \end{equation}
    where $K$ is a scalar that does not depend on $n$. If $n \geq 2$, then we know that one necessary condition for \eqref{eq:62} to hold is $\tfrac{P_{11}\eta^2}{n} - \lambda_5 \leq 0$.
    Let $\lambda_5^\prime \triangleq \tfrac{\lambda_5}{n}$, we have
    \begin{equation}\label{eq:cond}
        (P_{11}\eta^2 - \lambda_5^\prime) \iden_n + (P_{44} - P_{14}\eta - P_{41}\eta - \lambda_1 - \lambda_3 + \lambda_5^\prime \delta + K) \frac{\bm{1}_{n\times n}}{n}  \preceq 0.
    \end{equation}
    To further simplify \eqref{eq:cond}, we need to introduce the following Lemma.
    \begin{lem}
        For $a\iden_n + b \tfrac{\bm{1}_{n\times n}}{n} \preceq 0$ to hold ($n \geq 2$), we have $a \leq 0$ and $a + b \leq 0$.
    \end{lem}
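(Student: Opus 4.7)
The plan is to diagonalize $a\iden_n + b\,\bm{1}_{n\times n}/n$ explicitly by exploiting the fact that $\bm{1}_{n\times n}/n$ is an orthogonal projection of rank one, and then read off the two distinct eigenvalues. First I would note that the normalized all-ones matrix $J \triangleq \bm{1}_{n\times n}/n$ satisfies $J^2 = J$ and $J^\top = J$, so $J$ is the orthogonal projector onto $\mathrm{span}(\bm{1})$. Hence $J$ has eigenvalue $1$ with eigenvector $\bm{1}/\sqrt{n}$ and eigenvalue $0$ on the $(n-1)$-dimensional orthogonal complement $\{v : \bm{1}^\top v = 0\}$.

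Consequently, the matrix $a\iden_n + b J$ shares these eigenvectors and has eigenvalues $a+b$ (with eigenvector $\bm{1}/\sqrt{n}$) and $a$ (with multiplicity $n-1$ on $\{v : \bm{1}^\top v = 0\}$). The matrix is negative semidefinite if and only if all its eigenvalues are non-positive, which gives precisely $a+b \leq 0$ and $a \leq 0$. The assumption $n \geq 2$ is essential here because it guarantees that the eigenspace associated with $a$ is nonempty (it has dimension $n-1 \geq 1$); otherwise when $n=1$ we would only see the single eigenvalue $a+b$.

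The hard part, if any, is merely being careful about the two directions of the argument: necessity follows by plugging the unit vectors $\bm{1}/\sqrt{n}$ and any unit $v \perp \bm{1}$ into the quadratic form $x^\top(a\iden_n + bJ)x$ to obtain $a+b \leq 0$ and $a \leq 0$, respectively; sufficiency follows from the spectral decomposition since a symmetric matrix with only non-positive eigenvalues is $\preceq 0$. No further estimates are needed.
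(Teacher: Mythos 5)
Your proof is correct and follows essentially the same route as the paper: both compute the spectrum of $a\iden_n + b\,\bm{1}_{n\times n}/n$, namely $a$ with multiplicity $n-1$ and $a+b$ with multiplicity $1$, and read off the conclusion. You simply fill in the details by observing that $\bm{1}_{n\times n}/n$ is a rank-one orthogonal projector, which the paper leaves implicit.
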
  This Lemma can be proved immediately by showing that the matrix $a\iden_n + b \tfrac{\bm{1}_{n\times n}}{n}$ has eigenvalues $\lambda_1 = ... = \lambda_{n-1} = a$ and $\lambda_n = a + b$. One caveat is that this Lemma only hold for $n \geq 2$.
    Hence, one can show the necessary and sufficient condition of \eqref{eq:cond} is as follows.
    \begin{equation}\label{eq:cond-2}
        P_{11}\eta^2 - \lambda_5^\prime + P_{44} - P_{14}\eta - P_{41}\eta - \lambda_1 - \lambda_3 + \lambda_5^\prime \delta + K \leq 0, \; P_{11}\eta^2 - \lambda_5^\prime \leq 0.
    \end{equation}
    It is important to note that two conditions in \eqref{eq:cond-2} are all independent of the choice of $n$. Therefore, we conclude that for any choice of $P, \lambda_1, \lambda_2, \lambda_3, \lambda_4, \lambda_5^\prime$, if the LMI~\eqref{eq:sdp-jump} is feasible for a particular $n \geq 2$, then it is feasible for all $n \geq 2$. In other words, the feasible set of the LMI~\eqref{eq:sdp-jump} is invariant to the choice of $n$, which further implies the optimal $\rho$ of the corresponding SDP is independent of $n$.
    This completes the proof.
\end{proof}

\gdsto*
\begin{proof}
For any $k \geq 0$, we have
\begin{equation*}
\begin{aligned}
    \| \bz_{k+1} - \bz^*\|_2^2 &= \| \bz_{k} - \eta F(\bz_k; \epsilon_k) - \bz^*\|_2^2 \\
    &= \| \bz_{k} - \bz^*\|_2^2 - 2\eta F(\bz_k; \epsilon_k)^\top (\bz_{k} - \bz^*) + \eta^2 \|F(\bz_k; \epsilon_k) \|_2^2
\end{aligned}
\end{equation*}
We then take the expectation over $\epsilon_k$ and obtain
\begin{equation*}
\begin{aligned}
    \expect [\| \bz_{k+1} - \bz^*\|_2^2] &= \| \bz_{k} - \bz^*\|_2^2 - 2\eta \expect [F(\bz_k; \epsilon_k)]^\top (\bz_{k} - \bz^*) + \eta^2 \expect[\|F(\bz_k; \epsilon_k) \|_2^2] \\ 
    & = \| \bz_{k} - \bz^*\|_2^2 - 2\eta F(\bz_k)^\top (\bz_{k} - \bz^*) + \eta^2 \expect[\|F(\bz_k; \epsilon_k) \|_2^2] \\
    & \leq \| \bz_{k} - \bz^*\|_2^2 - 2\eta m \| \bz_{k} - \bz^*\|_2^2 + \eta^2 \expect[\|F(\bz_k; \epsilon_k) \|_2^2] \\
    & \leq (1 - 2\eta m + \eta^2 \delta L^2) \| \bz_{k} - \bz^*\|_2^2 \\
    & = \left(1 - \frac{1}{\kappa^2 \delta} \right)\| \bz_{k} - \bz^*\|_2^2
\end{aligned}
\end{equation*}
where we used the strongly-monotone assumption in the first inequality, Lipschitz assumption and the strong growth condition in the second inequality. By repeatedly taking expectation over $\epsilon_{k-1}, \epsilon_{k-2}, ...$, we conclude
\begin{equation*}
    \expect \|\bz_k - \bz^* \|_2^2 \leq \left(1 - \frac{1}{\kappa^2 \delta} \right)^k  \|\bz_0 - \bz^* \|_2^2
\end{equation*}
Hence, we prove this Theorem.
\end{proof}

\section{Additional Results}
\subsection{Evidences for Conjecture~\ref{conj}}
\begin{figure}[h]
\vspace{-0.3cm}
	\centering
    \begin{subfigure}[t]{0.45\textwidth}
        \centering
        \includegraphics[width=0.98\textwidth]{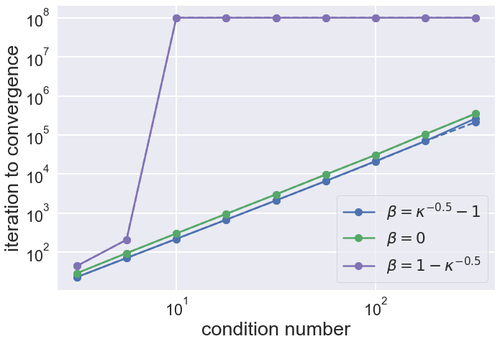}
        \vspace{-0.2cm}
        \caption{$\alpha = -1.0$}
    \end{subfigure}
    \begin{subfigure}[t]{0.45\textwidth}
        \centering
        \includegraphics[width=0.98\textwidth]{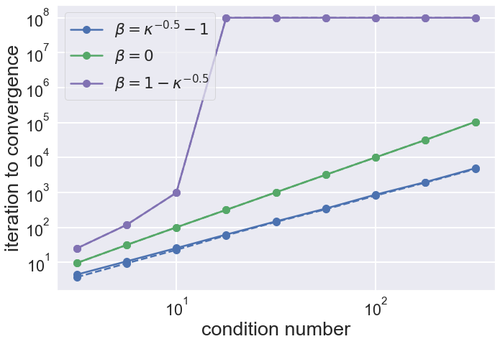}
        \vspace{-0.2cm}
        \caption{$\alpha = 0.0$}
    \end{subfigure}
    \begin{subfigure}[t]{0.45\textwidth}
        \centering
        \includegraphics[width=0.98\textwidth]{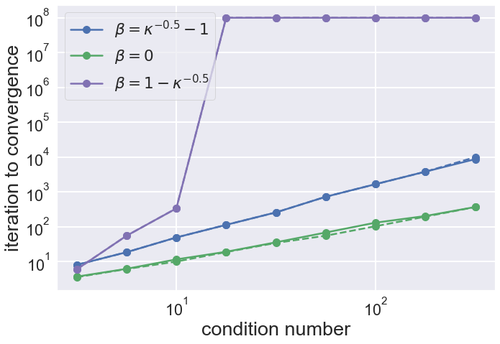}
        \vspace{-0.2cm}
        \caption{$\alpha = 1.0$}
    \end{subfigure}
    \begin{subfigure}[t]{0.45\textwidth}
        \centering
        \includegraphics[width=0.98\textwidth]{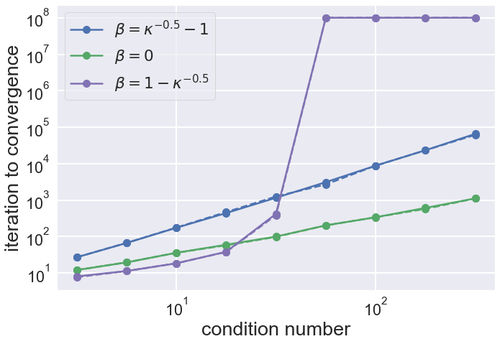}
        \vspace{-0.2cm}
        \caption{$\alpha = 5.0$}
    \end{subfigure}
    \vspace{-0.25cm}
	\caption{Curves of iteration complexity for algorithms with one step of memory~\eqref{eq:second-diff}. We pick some representative algorithms with different $\alpha, \beta$. We then do grid search over step size for every algorithm. The solid curves are the ones with only either the sector IQCs (for GD) or a combination of the sector and off-by-one pointwise IQCs while the dashed curves are obtained by adding off-by-two pointwise IQCs. Basically, we observe that off-by-two IQCs are totally redundant. In the special case of $\alpha = \beta = 0$, off-by-one IQCs are also redundant. With some particular choices of parameters, the algorithm fails to converge and we set the upper bound to be $10^8$ for visual clarity.}
	\label{fig:conjecture}
\end{figure}

\subsection{Proof of ASGD under the Strong Growth Condition}\label{app:asgd}
We note that the original ASGD~\citep{jain2018accelerating} was proposed for least squares regression, here we generalize the result to general smooth and strongly-convex functions under the strong growth condition. 
\begin{thm}[AGSD]
    Under $L$-smoothness and $m$-strong-convexity, if f satisfies the strong growth condition with constant $\delta$, then ASGD in the form of \eqref{eq:second-diff} with the following choice of parameters:
    \begin{equation}\label{eq:param-choice}
        \alpha = \frac{\sqrt{\kappa}\delta - 1}{\sqrt{\kappa} + 1}, \; \beta = \frac{\sqrt{\kappa}\delta - 1}{\sqrt{\kappa}\delta + 1}, \; \eta = \frac{\sqrt{\kappa}+1}{\sqrt{\kappa}\delta + 1}\frac{1}{L \delta}
    \end{equation}
    results in the following convergence rate:
    \begin{equation}
        \expect[f(\bz_k)] - f(\bz^*) \leq \left(1 - \frac{1}{\sqrt{\kappa}\delta} \right)^k \left(f(\bz_0) - f(\bz^*) + \frac{1}{2}m \|\bz^*\|_2^2\right)
    \end{equation}
\end{thm}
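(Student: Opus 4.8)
The plan is to follow the potential-function (estimate-sequence) route, which is the gradient-minimization analogue of the IQC Lyapunov arguments used elsewhere in the paper; the LMI of Theorem~\ref{thm:jump-system} then serves mainly as the systematic tool for \emph{discovering} the certificate coefficients. First I would rewrite ASGD~\eqref{eq:second-diff} with the parameters~\eqref{eq:param-choice} in three-sequence form: besides the iterate $\bz_k$ and the extrapolation point $y_k = (1+\alpha)\bz_k - \alpha\bz_{k-1}$ at which the stochastic gradient is queried, I would introduce the momentum/estimate sequence $\bv_k$, chosen so that $y_k$ is a convex combination of $\bz_k$ and $\bv_k$ and $\bv_{k+1}$ is a convex combination of $\bv_k$ and $y_k - (\text{step})\cdot\nabla f(y_k;\epsilon_k)$. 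Verifying that these algebraic identities hold for the exact $\alpha,\beta,\eta$ in~\eqref{eq:param-choice} is routine bookkeeping. One can take $n=2$ as elsewhere in the paper (cf.\ Theorem~\ref{thm:dim-free}), but the argument below is cleanest stated directly in expectation.

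Second, I would set up the potential $\Phi_k \triangleq \expect[f(\bz_k) - f(\bz^*)] + \tfrac{m}{2}\,\expect\|\bv_k - \bz^*\|_2^2$ and bound $\Phi_{k+1}$ by combining three classical inequalities with carefully chosen nonnegative weights: (i) the $L$-smoothness descent lemma at $y_k$ applied to the stochastic step, which after taking the conditional expectation over $\epsilon_k$ leaves a variance term proportional to $\eta^2\,\expect\|\nabla f(y_k;\epsilon_k)\|_2^2$; (ii) convexity $f(\bz_k) \ge f(y_k) + \nabla f(y_k)^\top(\bz_k - y_k)$, used to eliminate $f(y_k)$ in favor of $f(\bz_k)$; and (iii) $m$-strong convexity toward $\bz^*$, $f(\bz^*) \ge f(y_k) + \nabla f(y_k)^\top(\bz^* - y_k) + \tfrac{m}{2}\|\bz^* - y_k\|_2^2$, which supplies the $\tfrac{m}{2}\|\bv_{k+1}-\bz^*\|_2^2$ term at the next step. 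The strong growth condition~\eqref{eq:strong-growth} enters precisely in (i): replacing $\expect\|\nabla f(y_k;\epsilon_k)\|_2^2$ by $\delta\|\nabla f(y_k)\|_2^2$ is what forces $\delta$ to multiply $\sqrt{\kappa}$ in the final rate, since the deterministic version would tolerate $\eta\sim 1/L$ whereas here the admissible step is $\eta\sim 1/(L\delta)$, exactly the scaling in~\eqref{eq:param-choice}.

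Third, and this is the main obstacle, I would verify that with these weights and the parameters~\eqref{eq:param-choice} all the non-potential terms (the displacement terms $\|\bz_{k+1}-y_k\|_2^2$, $\|y_k-\bz_k\|_2^2$, $\|\bv_k-y_k\|_2^2$ together with their cross terms, and the variance term from strong growth) either cancel or are dominated, so that $\Phi_{k+1} \le \rho^2\Phi_k$ with $\rho^2 = 1 - \tfrac{1}{\sqrt{\kappa}\,\delta}$. Concretely this collapses to a small set of scalar identities in $\kappa$ and $\delta$ that~\eqref{eq:param-choice} is engineered to meet with equality; rather than guessing, I would first solve the LMI~\eqref{eq:sdp-jump}---augmented by the function-value rows coming from the smoothness/convexity pointwise IQCs, exactly as in the minimization analysis of~\citet{lessard2016analysis}---numerically over a grid of $(\kappa,\delta)$, read off closed forms for the Lyapunov matrix $P\succeq 0$ and the multipliers, and then confirm them by hand. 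The delicate point is making the quadratic weight on $\|\bv_k-\bz^*\|_2^2$ reproduce $\tfrac{m}{2}$ after one step while the momentum cross terms cancel and the $\eta^2\delta L^2\|\nabla f(y_k)\|_2^2$ variance term is simultaneously absorbed.

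Finally, unrolling $\Phi_{k+1}\le\rho^2\Phi_k$ gives $\expect[f(\bz_k)] - f(\bz^*) \le \Phi_k \le \rho^{2k}\Phi_0$, and with the initialization $\bz_{-1}=\bz_0$ (so that $\bv_0=\bz_0$) one has $\Phi_0 = f(\bz_0) - f(\bz^*) + \tfrac{m}{2}\|\bz_0-\bz^*\|_2^2$; specializing to the $\bz_0=0$ convention of~\citet{jain2018accelerating} turns this into $f(\bz_0) - f(\bz^*) + \tfrac12 m\|\bz^*\|_2^2$ and yields the claimed rate.
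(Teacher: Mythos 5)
Your proposal is correct and follows essentially the same route as the paper's own proof: it rewrites ASGD in multi-sequence form, uses the Lyapunov potential $f(\text{iterate}) + \tfrac{m}{2}\|\text{estimate sequence} - \bz^*\|_2^2$, combines the smoothness descent lemma with (strong) convexity at the extrapolation point, and invokes the strong growth condition to control the stochastic-gradient norm so that the admissible stepsize scales as $1/(L\delta)$. The paper discovers the certificate by hand (arriving at $\eta_1 = 1/m$, $\beta_1\beta_2/(1-\beta_2) = 1$, and $\tfrac{m}{2}\beta_1^2\eta_1^2\delta - \eta_2 + \tfrac{L}{2}\eta_2^2\delta \le 0$) rather than by first solving the LMI numerically, but that is a discovery heuristic, not a logical difference.
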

\begin{proof}
    To begin with, we rewrite ASGD in the following form:
    \begin{equation}\label{eq:comp-eq}
    \begin{aligned}
        \bx_k &= \bz_{k-1} - \eta_1 \nabla_\bz f(\bz_{k-1}; \epsilon_{k-1}) \\
        \by_k &= \bz_{k-1} - \eta_2 \nabla_\bz f(\bz_{k-1}; \epsilon_{k-1}) \\
        \bv_k & = (1 - \beta_1) \bv_{k-1} + \beta_1 \bx_k \\
        \bz_k &= (1 - \beta_2) \bv_k + \beta_2 \by_k
    \end{aligned}
    \end{equation}
    We note that the equations~\eqref{eq:comp-eq} can be compactly written as a second-order difference equation in the form of~\eqref{eq:second-diff}. We choose $\bx_0 = \by_0 = \bv_0 = 0$ for convenience. In addition, we also stress that all stationary states are the same in the sense of $\bx^* = \by^* = \bv^* = \bz^*$. Without loss of generality, we assume the minimal loss $f(\bz^*) = 0$. Now we choose the Lyapunov function of $V(k) \triangleq f(\by_k) + \tfrac{1}{2} m \|\bv_k - \bv^* \|_2^2$, then it suffice to prove
    \begin{equation}
        \expect[V(k+1)] \leq \left(1 - \frac{1}{\sqrt{\kappa}\delta} \right) V(k).
    \end{equation}
    First, we notice that 
    \begin{equation}\label{eq:first-term}
    \begin{aligned}
        \expect[f(\by_{k+1})] &= \expect[f(\bz_k - \eta_2 \nabla_\bz f(\bz_k; \epsilon_k))] \\
        & \leq f(\bz_k) - \eta_2 \|\nabla_\bz f(\bz_k)\|_2^2 + \frac{L}{2}\eta_2^2 \expect[\|\nabla_\bz f(\bz_k; \epsilon_k)\|_2^2] \\
        & \leq  f(\bz_k) - \eta_2 \|\nabla_\bz f(\bz_k)\|_2^2 + \frac{L}{2}\eta_2^2 \delta \|\nabla_\bz f(\bz_k)\|_2^2
    \end{aligned}
    \end{equation}
    where the first inequality we used the $L$-smoothness of $f$ and the second inequality we used the strong growth condition. Further, we consider the other term $\tfrac{1}{2}m \|\bv_{k+1} - \bv^*\|_2^2$.
    \begin{equation}\label{eq:velocity-term}
    \begin{aligned}
        \frac{1}{2}m \expect[\|\bv_{k+1} - \bv^* \|_2^2] & = \frac{1}{2}m \expect[\|(1 - \beta_1)(\bv_k - \bv^*) + \beta_1 (\bz_k - \bz^*) - \beta_1 \eta_1 \nabla_\bz f(\bz_k; \epsilon_k) \|_2^2] \\
        &\leq \frac{1}{2}m (1 - \beta_1) \|\bv_k - \bv^* \|_2^2 + \frac{1}{2}m \beta_1 \| \bz_k - \bz^* \|_2^2 + \frac{1}{2}m \beta_1^2\eta_1^2 \expect[\|\nabla_\bz f(\bz_k;\epsilon_k) \|_2^2] \\
        &\qquad\qquad- m \beta_1 \eta_1 ((1 - \beta_1)(\bv_k - \bv^*) + \beta_1 (\bz_k - \bz^*))^\top \nabla_\bz f(\bz_k) \\
        &\leq \frac{1}{2}m (1 - \beta_1) \|\bv_k - \bv^* \|_2^2 + \frac{1}{2}m \beta_1 \| \bz_k - \bz^* \|_2^2 + \frac{1}{2}m \beta_1^2\eta_1^2 \delta\|\nabla_\bz f(\bz_k) \|_2^2 \\
        &\qquad\qquad- m \beta_1 \eta_1 ((1 - \beta_1)(\bv_k - \bv^*) + \beta_1 (\bz_k - \bz^*))^\top \nabla_\bz f(\bz_k)
    \end{aligned}
    \end{equation}
    where we used Jensen inequality in the first inequality and then the strong growth condition in the second inequality. Next, we observe that
    \begin{align}
        (1 - \beta_1)(\bv_k - \bv^*) + \beta_1 (\bz_k - \bz^*) &= (1 - \beta_1)\frac{\bz_k - \beta_2 \by_k}{1 - \beta_2} + \beta_1 \bz_k - \bz^* \notag \\
        & = (1 - \beta_1)\frac{\beta_2}{1 - \beta_2}(\bz_k - \by_k) + \bz_k - \bz^*.
    \end{align}
    Therefore, we have
    \begin{multline}\label{eq:temp-eq}
        ((1 - \beta_1)(\bv_k - \bv^*) + \beta_1 (\bz_k - \bz^*))^\top \nabla_\bz f(\bz_k) \\
        \geq f(\bz_k) + \frac{1}{2}m \|\bz_k - \bz^*\|_2^2 
        + (1 - \beta_1)\frac{\beta_2}{1 - \beta_2} (f(\bz_k) - f(\by_k))
    \end{multline}
    where we used the $m$-strong-convexity of $f$.
    Plugging \eqref{eq:temp-eq} back into \eqref{eq:velocity-term}, we have
    \begin{multline}\label{eq:second-term}
        \frac{1}{2}m \expect[\|\bv_{k+1} - \bv^* \|_2^2]
        \leq \frac{1}{2}m (1 - \beta_1) \|\bv_k - \bv^* \|_2^2 + \frac{1}{2}m (\beta_1 - \beta_1 \eta_1 m) \| \bz_k - \bz^* \|_2^2  \\ + \frac{1}{2}m \beta_1^2\eta_1^2 \delta\|\nabla_\bz f(\bz_k) \|_2^2
        -m \beta_1 \eta_1 f(\bz_k) - m \beta_1 \eta_1 (1 - \beta_1)\frac{\beta_2}{1 - \beta_2} (f(\bz_k) - f(\by_k))
    \end{multline}
    Recall that our goal is to prove $V(k) \triangleq f(\by_k) + \tfrac{1}{2} m \|\bv_k - \bv^* \|_2^2$ is a valid Lyapunov function and also it decreases at every iteration. To achieve that, we have to choose the value of $\eta_1, \eta_2, \beta_1, \beta_2$ carefully. By inspecting \eqref{eq:first-term} and \eqref{eq:second-term}, we find the following parameters might work:
    \begin{equation}
        \eta_1 = 1/m, \quad
        \beta_1 \frac{\beta_2}{1 - \beta_2} = 1, \quad
        \frac{1}{2}m \beta_1^2 \eta_1^2 \delta - \eta_2 + \frac{L}{2}\eta_2^2 \delta \leq 0
    \end{equation}
    Combining \eqref{eq:first-term} and \eqref{eq:second-term}, we have 
    \begin{equation}
        \expect[V(k+1)] \leq (1 - \beta_1 ) V(k).
    \end{equation}
    Therefore, we could choose $\eta_1 = \frac{1}{m}$, $\eta_2 = \frac{1}{L\delta}$, $\beta_1 = \frac{1}{\sqrt{\kappa}\delta}$, $\beta_2 = \frac{\sqrt{\kappa}\delta}{\sqrt{\kappa}\delta + 1}$. Comparing \eqref{eq:comp-eq} to \eqref{eq:second-diff}, we find that this choice of parameters $\eta_1, \eta_2, \beta_1, \beta_2$ exactly corresponds to \eqref{eq:param-choice}. Hence, we finish the proof.
\end{proof}
\begin{rem}
By setting $\delta$ to be $1$ (i.e., deterministic setting), the algorithm of ASGD is exactly Nesterov's accelerated method (NAG)~\citep{nesterov1983method} and we also recover the convergence rate of NAG.
\end{rem}

\subsection{IQC Analysis for Sample-batch Optimistic Gradient Method}\label{app:sbog}
Recall the same-batch OG update in the finite-sum setting
\begin{equation}
    \begin{aligned}
        \bz_{k+1/2} &= \bz_k - \eta F_{i_k}(\bz_{k-1/2}) \\
        \bz_{k+1} &= \bz_k - \eta F_{i_k}(\bz_{k+1/2})
    \end{aligned}
\end{equation}
To model this algorithm as a discrete dynamical system, we need to take $\xi_k = [\bz_{k}^\top, \bz_{k-1/2}^\top]^\top$. We then have the state matrices as follows (in the case of $n = 2$)
\begin{equation*}
    \left[ 
    \begin{array}{c|c}
        A & B_{i_k} \\ \hline
        C & D
    \end{array}
    \right]
    = 
    \left[ 
    \begin{small}
    \begin{array}{cc|cc}
        1 & 0 & \bsmat{0 & 0} & -\eta \mathbf{e}_{i_k}^\top \\ 
        1 & 0 & -\eta \mathbf{e}_{i_k}^\top & \bsmat{0 & 0} \\ \hline
        0 & 1 & \bsmat{0 & 0} & \bsmat{0 & 0}  \\
        0 & 1 & \bsmat{0 & 0} & \bsmat{0 & 0}  \\
        1 & 0 & \bsmat{-\eta, 0} & \bsmat{0 & 0}  \\
        1 & 0 & \bsmat{0, -\eta} & \bsmat{0 & 0}
    \end{array} 
    \end{small}
    \right] \otimes \iden_d
\end{equation*}
where $\mathbf{e}_{i_k}$ is a one-hot vector with $i_k$-entry being $1$. We also have the map $\Psi$ in the following form for sector IQCs:
\begin{equation}
    \Psi_1 = \Psi_2 = \left[ 
    \begin{small}
    \begin{array}{c|cccc|cccc}
        \zero_d & \zero_d & \zero_d & \zero_d & \zero_d & \zero_d & \zero_d & \zero_d & \zero_d \\ \hline
        \zero_d & \zero_d & \zero_d & \iden_d & \zero_d & \zero_d & \zero_d & \zero_d & \zero_d \\
        \zero_d & \zero_d & \zero_d & \zero_d & \iden_d & \zero_d & \zero_d & \zero_d & \zero_d \\
        \zero_d & \zero_d & \zero_d & \zero_d & \zero_d & \zero_d & \zero_d & \iden_d & \zero_d \\
        \zero_d & \zero_d & \zero_d & \zero_d & \zero_d & \zero_d & \zero_d & \zero_d & \iden_d
    \end{array}
    \end{small}
    \right] 
\end{equation}
Similarly, for off-by-one pointwise IQCs, we have
\begin{equation}
    \Psi_3 = \Psi_4 = \left[ 
    \begin{small}
    \begin{array}{c|cccc|cccc}
        \zero_d & \zero_d & \zero_d & \zero_d & \zero_d & \zero_d & \zero_d & \zero_d & \zero_d \\ \hline
        \zero_d & -\iden_d & \zero_d & \iden_d & \zero_d & \zero_d & \zero_d & \zero_d & \zero_d \\
        \zero_d & \zero_d & -\iden_d & \zero_d & \iden_d & \zero_d & \zero_d & \zero_d & \zero_d \\
        \zero_d & \zero_d & \zero_d & \zero_d & \zero_d & -\iden_d & \zero_d & \iden_d & \zero_d \\
        \zero_d & \zero_d & \zero_d & \zero_d & \zero_d & \zero_d & -\iden_d & \zero_d & \iden_d
    \end{array}
    \end{small}
    \right] 
\end{equation}
In addition, we have the following matrices describing the assumptions:
\begin{equation*}
    M_1 = M_3 = \left[ 
    \begin{small}
    \begin{array}{cccc}
        L^2 \iden_d & \zero_d & \zero_d & \zero_d \\
        \zero_d & L^2\iden_d & \zero_d & \zero_d \\
        \zero_d & \zero_d & -\iden_d & \zero_d \\
        \zero_d & \zero_d & \zero_d & -\iden_d
    \end{array}
    \end{small}
    \right] \; \text{and} \; 
    M_2 = M_4 = \left[ 
    \begin{small}
    \begin{array}{cccc}
        -2m \iden_d & \zero_d & \iden_d & \zero_d \\
        \zero_d & -2m \iden_d & \zero_d & \iden_d \\
        \iden_d & \zero_d & \zero_d & \zero_d \\
        \zero_d & \iden_d & \zero_d & \zero_d
    \end{array}
    \end{small}
    \right]
\end{equation*}
Finally, by \eqref{eq:jump-compact} and Theorem~\ref{thm:jump-system}, we can reduce the problem to a small SDP.

\newpage
\bibliography{reference}
\end{document}